\theoremstyle{plain}
\newtheorem{thm}{Theorem}[section]
\newtheorem{lem}[thm]{Lemma}
\newtheorem{prop}[thm]{Proposition}
\newtheorem{clm}[thm]{Claim}
\theoremstyle{definition}
\newcommand{\Lra}{\ensuremath{\Leftrightarrow}}
\newcommand{\sm}{\ensuremath{\setminus}}
\newcommand{\isom}{\ensuremath{\cong}}
\newcommand{\Aut}{\textnormal{Aut}}
\newcommand{\es}{\ensuremath{\emptyset}}
\newcommand{\sub}{\subseteq}
\newcommand{\nat}{{\mathbb N}}
\newcommand{\AF}{\ensuremath{\mathcal A}}
\begin{document}

\title{The classification of finite and locally finite connected-homogeneous digraphs}
\author{Matthias Hamann\bigskip \\Fachbereich Mathematik\\Universit\"at Hamburg}
\date{\today}
\maketitle

\begin{abstract}
We classify the finite connected-homogeneous digraphs, as well as the infinite such digraphs with precisely one end.
This completes the classification of all the locally finite connected-homo\-ge\-neous digraphs.
\end{abstract}

\section{Introduction}

A graph is called {\em homogeneous} if every isomorphism between two finite induced subgraphs extends to an automorphism of the entire graph.
The countable homogeneous graphs were classified in~\cite{Gard-HomogeneousGraphs,LW-CountUltrahomGraphs}.
Weakening the assumptions of homogeneity so that only isomorphisms between finite {\em connected} induced subgraphs have to extend to automorphisms leads to the notion of {\em connected-homogeneous} graphs, or simply {\em C-homogeneous} graphs.
These graphs were classified in~\cite{Enomoto,Gard-HomConditions,GrayMacpherson,HP-Transitivity,HedmanPong}.

For directed graph, or digraphs, the same notions of homogeneity and C-homoge\-neity apply.
The homogeneous digraphs were classified in \cite{Cherlin-CountHomDigraphs,L-FiniteHomDigraphs,L-Tournaments}.
Of the C-homo\-geneous digraphs only those that have more than one end have been classified \cite{GM-CHomDigraphs,HH-ConHomDigraphs}.
This paper completes the classification of locally finite C-homoge\-neous digraphs, by describing those that are finite or have precisely one end (Theorem~\ref{thm_Main1}).

Undirected locally finite C-homogeneous graphs, as is well-known, cannot have precisely one end (see~\cite{DistanceTransitive}).
Directed such graphs can; but they have a very restricted structure.
We shall see in Section~\ref{sec_Imprimitive} that these digraphs are quotients of one particular locally finite C-homogeneous digraph with infinitely many ends, the digraph $T(2)$.
This is the digraph in which every vertex is a cut vertex and lies on precisely two directed triangles.
Some of the finite examples are also quotients of $T(2)$.
It turns out that all the other finite connected C-homogeneous digraphs have their origin in the finite homogeneous digraphs; they are canonical generalizations of the homogeneous digraphs.
See Section~\ref{sec_NonIndCase} and Section~\ref{sec_IndCase} for more details.

Recall that every connected locally finite transitive (di)graph has either none, one, two, or infinitely many ends, see~\cite{DJM}.
Together with the classification by Gray and M\"oller \cite{GM-CHomDigraphs} of the two-ended digraphs and the classification by Hamann and Hundertmark~\cite{HH-ConHomDigraphs} of the infinitely-ended digraphs, our results thus complete the classification of all the locally finite C-homogeneous digraphs.

\section{Preliminaries}

\subsection{Definitions}

A {\em digraph} $D=(VD,ED)$ consists of a non-empty set $VD$ of {\em vertices} and an asymmetric, i.e.\ irreflexive and anti-symmetric, relation $ED$ on $VD$, its {\em edges}.
For $(x,y)\in ED$ we simply write $xy\in ED$ and say that the edge $xy$ is directed from $x$ to~$y$.
Then the vertices $x$ and $y$ are {\em adjacent}.

For $x\in VD$ we denote with $N^+(x)$ the {\em out-neighborhood} $\{y\mid xy\in ED\}$, with $N^-(x)$ the {\em in-neighborhood} $\{y\mid yx\in ED\}$, and with $N(x)$ the {\em neighborhood} $N^+(x)\cup N^-(x)$ of~$x$.
The {\em out-degree} $d^+(x)$ of~$x$ is the cardinality of $N^+(x)$, the {\em in-degree} $d^-(x)$ is the cardinality of $N^-(x)$, and the {\em degree} $d(x)$ is the cardinality of~$N(x)$.
If $D$ is a transitive digraph, then we denote with $d^+,d^-$ the value of $d^+(x),d^-(x)$, respectively, for all $x\in VD$.
Every element of $N^+(x)$ is called a {\em successor} of~$x$ and every element of $N^-(x)$ is called a {\em predecessor} of~$x$.

A {\em ($k$)-arc} is a directed path (of length $k$).
An {\em ancestor} ({\em descendant}) of a vertex $x$ is any vertex $y$ for which there exists an arc from $y$ to~$x$ (from $x$ to~$y$).
A {\em walk} is a sequence $x_0x_1\ldots x_n$ of vertices such that $x_i$ and $x_{i+1}$ are adjacent for all $0\leq i<n$and it is an {\em alternating} walk if we have $x_{i-1}\in N^+(x_i)\Lra x_{i+1}\in N^+(x_i)$ for all $1\leq i\leq n-1$.
If two edges lie on a common alternating walk then they {\em reachable} from each other.
This defines an equivalence relation, the {\em reachability relation}, which we denote with $\AF$.
For the equivalence class of an edge $e$ we write $\AF(e)$ and call the by $\AF(e)$ induced subdigraph $\langle\AF(e)\rangle$ the {\em reachability digraph} of~$D$ that contains $e$.
If $D$ is {\em $1$-arc transitive}, that is $\Aut(D)$ is transitive on the $1$-arcs of~$D$, then all reachability digraphs of~$D$ are isomorphic and we denote with $\Delta(D)$ the corresponding digraph.

The reachability digraph of an edge $e$ is a {\em bipartite reachability digraph} if it is bipartite, if one class of this bipartition has empty in-neighborhood in $\langle\AF(e)\rangle$ and if the other class has empty out-neighborhood.

The following proposition is due to Cameron et.al.\ \cite[Proposition 1.1]{CPW}.

\begin{prop}\label{prop_CPW}
Let $D$ be a connected $1$-arc transitive digraph.
Then $\Delta(D)$ is $1$-arc transitive and connected.
Further, either
\begin{enumerate}[{\em (a)}]
\item $\AF$ is the universal relation on $ED$ and $\Delta(D)=D$, or
\item $\Delta(D)$ is a bipartite reachability digraph.\qed
\end{enumerate}
\end{prop}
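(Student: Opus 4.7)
The plan is to verify the three claims in the order connectedness, $1$-arc transitivity, and then the dichotomy, after first establishing the basic observation that the setwise stabilizer of $\AF(e_0)$ in $\Aut(D)$ acts transitively on its edges. This holds because every $\phi\in\Aut(D)$ preserves $\AF$ (hence permutes its classes), and $1$-arc transitivity of $D$ provides, for every $f\in\AF(e_0)$, some $\phi\in\Aut(D)$ with $\phi(e_0)=f$; such $\phi$ sends the class of $e_0$ to the class of $f$, both of which equal $\AF(e_0)$. This justifies the notation $\Delta(D)$ (all reachability digraphs of $D$ are isomorphic) and at the same time proves $1$-arc transitivity of $\Delta(D)$. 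Connectedness is then automatic: any two vertices of $\Delta(D)$ lie on edges of $\AF(e_0)$, and by the very definition of $\AF$ those edges are linked by an alternating walk, which uses only edges of $\AF(e_0)$ and so is a walk in $\Delta(D)$.

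For the dichotomy I would partition the vertex set of $\Delta(D)$ into $V^+$, the sources of edges in $\AF(e_0)$, and $V^-$, their targets. If $V^+\cap V^-=\es$, then $\langle\AF(e_0)\rangle$ is bipartite with parts $V^+,V^-$, where $V^+$ has empty in-neighborhood and $V^-$ has empty out-neighborhood in $\langle\AF(e_0)\rangle$; this is case~(b).

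The main obstacle, and the place where I expect the real work to lie, is to rule out the mixed case, i.e.\ to show that $V^+\cap V^-\neq\es$ forces $\AF$ to be universal. My plan is to pick $v\in V^+\cap V^-$ together with $(u_1,v),(v,u_2)\in\AF(e_0)$. For any $y\in N^+(v)$ the sequence $u_2vy$ is an alternating walk, since at the central vertex $v$ both $u_2$ and $y$ lie in $N^+(v)$; hence $(v,u_2)$ and $(v,y)$ are reachable, so $(v,y)\in\AF(e_0)$. Symmetrically, using the walk $u_1vy$ for $y\in N^-(v)$, \emph{every} edge of $D$ at $v$ belongs to $\AF(e_0)$. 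To propagate this to neighbors, for $y\in N^+(v)$ I would pick $\phi\in\Aut(D)$ with $\phi(u_1,v)=(v,y)$ (possible by $1$-arc transitivity of $D$); then $\phi$ stabilizes $\AF(e_0)$ setwise, sends $v$ to $y$, and maps $(v,u_2)\in\AF(e_0)$ to an out-edge of $y$ in $\AF(e_0)$, so $y\in V^+\cap V^-$. Arguing symmetrically for $y\in N^-(v)$ and invoking connectedness of $D$, I conclude $V^+\cap V^-=VD$ and $\AF(e_0)=ED$, which places us in case~(a).
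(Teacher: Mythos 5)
Your proof is correct and complete: the stabilizer argument gives $1$-arc transitivity of $\Delta(D)$ and the well-definedness of the notation, the alternating-walk observation gives connectedness, and the dichotomy argument (all edges at a vertex of $V^+\cap V^-$ fall into $\AF(e_0)$, then propagate to neighbours via $1$-arc transitivity and connectedness) correctly rules out any intermediate case. Note that the paper itself offers no proof of this statement --- it is quoted from Cameron, Praeger and Wormald --- and your argument is essentially the standard one from that source, so there is nothing to compare beyond confirming that it works.
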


We need some notations for infinite (di)graphs.
Let $G$ be a graph.
A {\em ray} in~$G$ is a one-way infinite path.
Two rays are {\em equivalent} if for every finite set $S$ of vertices both rays lie eventually in the same component of $G-S$.
This property is an equivalence relation whose equivalence classes are called the {\em ends} of~$G$.
The {\em ends} of a digraph are the ends of the underlying undirected graph.

\bigskip

In the following we describe some classes of digraphs that occur during the investigation of locally finite C-homogeneous digraphs.
With $C_m$ we usually denote directed cycles of length~$m$.
But if it is obvious from the context that we are considering a subdigraph of a bipartite reachability digraph, then we also use $C_m$ to denote a cycle in that reachability digraph.
Cycles of length $3$ are {\em triangles}.

A vertex set is {\em independent} if no two of its vertices are adjacent.
The digraph $\bar{K}_n$ is the empty digraph on~$n$ vertices.

For two digraphs $D,D'$ we denote with $D[D']$ the {\em lexicographic product} of $D$ and $D'$, that is the digraph with vertex set $VD\times VD'$ and edge set$$\{(x,y)(x',y')\mid xx'\in ED \text{ or } (x=x' \text{ and } yy'\in ED')\}.$$

The {\em complete bipartite} digraph is that bipartite digraph that contains all edges from $A$ to~$B$ for the bipartition $A\cup B$.
The {\em (directed) complement of a perfect matching} $CP_k$ is the digraph obtained from the complete bipartite digraph where a perfect matching between $A$ and $B$ is removed.

Let $Y_k$ be the digraph with vertex set $V_1\cup V_2\cup V_3$ where the $V_i$ denote pairwise disjoint sets of the same cardinality~$k$.
There are no edges $xy$ with $xy\in V_i$ for $i=1,2,3$ and the subdigraphs $D[V_i,V_{i+1}]$ (for $i=1,2,3$ with $V_4=V_1$) are isomorphic to complements of perfect matchings such that all edges are directed from $V_i$ to $V_{i+1}$ and such that the tripartite complement of~$D$ is the disjoint union of copies of~$C_3$, where the {\em tripartite complement} of~$D$ is the digraph
$$(VD,(\bigcup_{i=1,2,3}(V_i\times V_{i+1}))\setminus ED).$$

Let $\sim$ be an equivalence relation on a digraph $D$.
With $D_\sim$ we denote the digraph whose vertex set is the set of equivalence classes and with edges $XY$ whenever there are representatives $x\in X,y\in Y$ such that $xy\in ED$.
This is not a digraph in our restrictive meaning because it may have loops or for an edge $xy$ there might also exist the edge $yx$.
However, we just consider such equivalence relations that makes $D_\sim$ to a digraph, that means its adjacency relation is irreflexive and anti-symmetric.

\subsection{Group action}

Let $\Gamma$ be a group acting on a digraph $D$ and let $U\subseteq VD$.
We denote with $\Gamma_{U}$ the {\em (pointwise) stabilizer} of $U$, that is the subgroup of~$\Gamma$ that fixes each element of~$U$.
The same notion holds for an edge $e\in ED$ or a single vertex $x\in VD$.
If $\Gamma$ fixes the set $U$ setwise, then we denote with $\Gamma^U$ all the automorphisms of~$U$ that are obtained by restricting elements of~$\Gamma$ on~$U$.

For the following well-know proposition see for example \cite{Wielandt} or \cite[3.1.2]{Stellmacher}.

\begin{prop}\label{prop_Wielandt}
Every subgroup of $S_n$ with $n\in\nat$ is equal to $A_n$ or has index at least $n$.\qed
\end{prop}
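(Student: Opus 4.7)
The plan is to exploit the natural action of $S_n$ by left multiplication on the set of left cosets of a subgroup. Given $H\leq S_n$ of index $k$, this action yields a homomorphism $\varphi\colon S_n\to\mathrm{Sym}(S_n/H)\cong S_k$ whose kernel $K$ is a normal subgroup of $S_n$ contained in~$H$. So the whole argument reduces to understanding the normal subgroups of~$S_n$.

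For $n\geq 5$, I would invoke the classical fact that the only normal subgroups of $S_n$ are $\{e\}$, $A_n$, and $S_n$ (which follows from the simplicity of~$A_n$). Then a case distinction on $K$ finishes the proof. If $K=S_n$, then $H=S_n$. If $K=A_n$, then $A_n\leq H$, and since $[S_n:A_n]=2$ we must have $H=A_n$ or $H=S_n$. If $K=\{e\}$, then $\varphi$ is injective, so $n!=|S_n|\leq|S_k|=k!$, forcing $k\geq n$, i.e.\ $[S_n:H]\geq n$.

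The remaining small cases $n\leq 4$ would be handled by direct inspection of the subgroup lattices of $S_1,S_2,S_3,S_4$; the only slightly delicate one is $n=4$, where $S_4$ has the extra normal Klein four-subgroup, but one can simply enumerate proper subgroups and check the claim by hand.

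The main obstacle, and really the only nontrivial ingredient, is the description of the normal subgroups of~$S_n$ for $n\geq 5$, which in turn rests on the simplicity of~$A_n$. In a short self-contained write-up one would cite this, since reproving it here would dwarf the rest of the argument.
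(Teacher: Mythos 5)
The paper does not actually prove this proposition: it is imported from Wielandt and Kurzweil--Stellmacher with only a reference, so there is no internal argument to compare yours against. Your reduction via the coset action $S_n\to\mathrm{Sym}(S_n/H)$ to the normal subgroup structure of $S_n$ is exactly the standard proof of this fact for $n\ge 5$, and that part of your write-up is correct.

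The problem is the part you wave at. The statement as given is \emph{false} for $n=4$, so the promised ``check by hand'' cannot succeed: a Sylow $2$-subgroup of $S_4$ is dihedral of order $8$, has index $3<4$, and is not $A_4$. This is precisely where your $n\ge 5$ argument breaks down, because of the extra normal subgroup $V_4$ that you yourself mention --- $V_4$ occurs as the kernel $K$ of the coset action on the three cosets of that dihedral subgroup (indeed $S_4/V_4\cong S_3$ acts faithfully there), so the case analysis ``$K\in\{1,A_n,S_n\}$'' gains a fourth branch that does not lead to the desired conclusion. There is also the trivial defect that $H=S_n$ itself has index $1<n$ and is not $A_n$, which your own case $K=S_n$ produces without comment. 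The correct statement, and the one the rest of the paper actually needs, is that a subgroup of $S_n$ \emph{other than $S_n$ and $A_n$} has index at least $n$ provided $n\ne 4$; you should either prove that version or record the exceptions explicitly, rather than assert a hand verification at $n=4$ that would in fact turn up a counterexample.
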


\subsection{Homogeneous digraphs}

In this section we briefly recall the classification result of Lachlan for homogeneous digraphs \cite{L-FiniteHomDigraphs}.
Let $H$ be the digraph depicted in Figure~\ref{pic_H}.

\begin{figure}[h]
\begin{center}
\includegraphics[width=.50\textwidth]{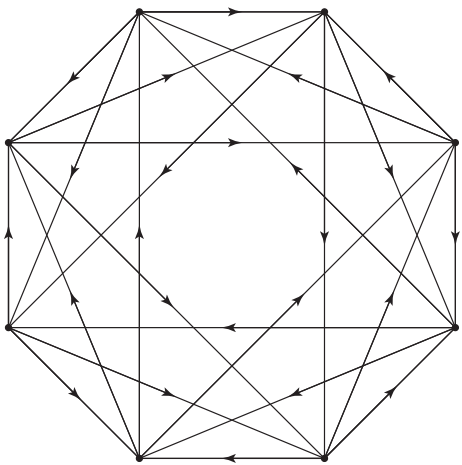}
\caption[Figure 1]{The digraph $H$}\label{pic_H}
\end{center}\end{figure}

\begin{thm}\label{thm_HomDigraphs}
{\em (Lachlan~\cite[Theorem~1]{L-FiniteHomDigraphs})}
A finite digraph is homogeneous if and only if it is isomorphic to one of the following digraphs:
\begin{enumerate}[{\em (i)}]
\item the $C_4$;
\item a $\bar{K}_n$ for an $n\geq 1$;
\item a $\bar{K}_n[C_3]$ for an $n\geq 1$;
\item a $C_3[\bar{K}_n]$ for an $n\geq 1$;
\item the digraph $H$.\qed
\end{enumerate}
\end{thm}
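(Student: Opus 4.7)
The plan is to show both directions. The forward direction — that each of $\bar K_n$, $\bar K_n[C_3]$, $C_3[\bar K_n]$, $C_4$, and $H$ is homogeneous — is routine: for $\bar K_n$ there is nothing to check; for the lexicographic products, an isomorphism between finite induced subdigraphs can be lifted fiber-by-fiber from isomorphisms of the factors; and for $C_4$ and $H$ a finite inspection of the automorphism group suffices.

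For the converse, let $D$ be a finite homogeneous digraph, and argue by induction on $|VD|$. If $D$ has no edges, $D\cong\bar K_{|VD|}$. Otherwise, I would first reduce to the twin-free case: define $x\sim y$ to mean $x\neq y$, $N^+(x)=N^+(y)$ and $N^-(x)=N^-(y)$. By homogeneity either $\sim$ is trivial or all $\sim$-classes have the same size $k\geq 2$; in the latter case the quotient $D':=D/{\sim}$ is again a finite homogeneous digraph, and reconstructing the fibers gives $D\cong D'[\bar K_k]$. Applying the induction hypothesis to $D'$ and checking which members of the list remain homogeneous after inflation by $\bar K_k$ — essentially $\bar K_1$ inflates to $\bar K_n$ and $C_3$ inflates to $C_3[\bar K_n]$, while $C_4$, $H$, and $\bar K_n[C_3]$ for $n\geq 2$ produce no further homogeneous digraphs under inflation (ruled out by exhibiting two non-adjacent pairs with unequal common-neighbor counts) — contributes exactly the $\bar K_n$ and $C_3[\bar K_n]$ entries.

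It thus suffices to classify twin-free finite homogeneous digraphs with at least one edge; the candidates in our list are $C_3$, $C_4$, $H$, and $\bar K_n[C_3]$ for $n\geq 2$ (which is twin-free since vertices in different connected components have disjoint, hence distinct, neighborhoods). Fix such a $D$ and a vertex $v$; by homogeneity the induced subdigraphs on $N^+(v)$, $N^-(v)$ and on the non-neighbor set $VD\setminus(N(v)\cup\{v\})$ are each finite homogeneous under the action of $\Aut(D)_v$ and strictly smaller than $D$, so by induction each lies in the list already classified. A case analysis on the isomorphism types of $N^+(v)$ and $N^-(v)$ — using Proposition~\ref{prop_Wielandt} to pin down the permutation group that $\Aut(D)_v$ induces on $N^+(v)$, and repeatedly applying homogeneity to produce or exclude small amalgams between $N^+(v)$, $N^-(v)$ and the non-neighbor set — then narrows the possibilities to the four candidates above. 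The principal obstacle is the bookkeeping of this case analysis: the smallest cases ($|N^+(v)|\leq 2$) have to be treated by hand, and ruling out larger "wild" digraphs requires repeatedly building finite configurations whose mandatory homogeneous extensions produce a contradiction, for instance by forcing an arc between two vertices that a previously fixed automorphism must keep non-adjacent.
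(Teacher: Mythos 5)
This statement is not proved in the paper at all: it is Lachlan's classification, imported verbatim from \cite{L-FiniteHomDigraphs} (hence the \qed directly after the statement), so there is no internal argument to compare yours against. Judged on its own terms, your proposal is a reasonable high-level strategy but not a proof. The preliminary reductions are fine and checkable: the forward direction is routine; the twin relation $x\sim y\Leftrightarrow N^+(x)=N^+(y)\wedge N^-(x)=N^-(y)$ is $\Aut(D)$-invariant, its classes are independent and of equal size, the quotient is homogeneous, and $D\cong D'[\bar K_k]$; and the claim that only $\bar K_1$ and $C_3$ survive inflation can be verified by the common-neighbor-count argument you indicate.

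The genuine gap is the final step. Everything you have done so far only reduces the theorem to: \emph{a twin-free finite homogeneous digraph with an edge is isomorphic to $C_3$, $C_4$, $H$, or $\bar K_n[C_3]$}. That reduction is the easy part; the displayed statement is essentially the whole content of Lachlan's theorem, and your proposal handles it with ``a case analysis \ldots then narrows the possibilities to the four candidates above,'' describing the method (induction on $N^+(v)$, $N^-(v)$ and the non-neighbor set, Proposition~\ref{prop_Wielandt}, amalgam-forcing) without carrying out a single case. Nothing in the sketch explains, for instance, why no homogeneous tournament other than $C_3$ occurs, why the exceptional digraph $H$ arises and why nothing larger does, or why a twin-free example with $|N^+(v)|\geq 2$ and an edge inside $N^+(v)$ is forced into one of the listed shapes; these are precisely the points where the work lies, and where an unexecuted ``bookkeeping'' step could conceal a failure. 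As written, the proposal is a plan for a proof, not a proof; to be acceptable it would either have to carry out that classification in full or, as the paper does, cite \cite{L-FiniteHomDigraphs} for it.
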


\section{The non-independent case}\label{sec_NonIndCase}

It is a straightforward argument that the out-neighborhood as well as the in-neighborhood of any vertex of a C-homogeneous digraph has to be a homogeneous digraph.
We investigate which of the homogeneous digraphs of Theorem~\ref{thm_HomDigraphs} may occur as a subdigraph induced by $N^+(x)$ or by $N^-(x)$ for a vertex $x\in VD$.
In this section we take a look at those cases that contain an edge and show that there is precisely one such case that may occur.
This case is a generalization of the digraph $H$ that occurs in the case (v) of Theorem~\ref{thm_HomDigraphs}.

Let $x$ be a vertex of a connected locally finite C-homogeneous digraph.
Our first aim is to show that $N^+(x)$ and $N^-(x)$ are both not isomorphic to~$H$.
Therefore, we define a {\em dominated} directed triangle to be a digraph that is isomorphic to a directed triangle together with a vertex that sends edges to all its vertices (Figure~\ref{pic_D3}).

\begin{figure}[h]
\begin{center}
\includegraphics[width=.250\textwidth]{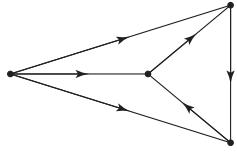}
\caption[Figure 1]{A dominated directed triangle}\label{pic_D3}
\end{center}\end{figure}

\begin{lem}\label{lem_NoH}
For every connected locally finite C-homogeneous digraph $D$ there is $N^+(x)\not\isom H$ and $N^-(x)\not\isom H$ for all $x\in VD$.
\end{lem}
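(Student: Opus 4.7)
The plan is to argue by contradiction. Suppose $N^+(x)\isom H$ for some $x\in VD$; the case $N^-(x)\isom H$ follows by reversing all edges of $D$, which preserves C-homogeneity. Since $D$ is C-homogeneous it is in particular vertex-transitive, so in the assumed case $N^+(y)\isom H$ for every $y\in VD$.

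The digraph $H$ contains a directed triangle, so I fix one such triangle $y_1y_2y_3\subseteq N^+(x)$. Then $\{x,y_1,y_2,y_3\}$ induces a dominated directed triangle as in Figure~\ref{pic_D3}. The key quantity to examine is the number $c$ of vertices of $D$ that are common out-neighbors of the three vertices of a directed triangle $T$ (equivalently, that together with $T$ form a dominated directed triangle); by C-homogeneity this count depends only on the isomorphism type of $T$, hence it is the same constant for every directed triangle of $D$.

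The next step is to compute $c$ in two complementary ways. Inside $N^+(x)\isom H$, the common out-neighbors of $y_1,y_2,y_3$ form a set of size $a$ determined entirely by the structure of $H$ (the number of vertices of $H$ dominating a given directed triangle of $H$, possibly zero). Globally one has $c=a+r$, where $r\geq 1$ since $x$ itself dominates $T$. Evaluating $c$ from the viewpoint of $N^+(y_1)\isom H$ yields a second expression: the common out-neighbors of $y_2,y_3$ inside this copy of $H$, plus contributions from outside $N^+(y_1)$. The vertex $x$ appears in the first viewpoint but not in the second (since $x\in N^-(y_1)$), so comparing the two counts, together with the symmetric analysis at $y_2$ and $y_3$, produces rigid numerical constraints on the triangle-dominator parameters of $H$.

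The hard part is the precise bookkeeping with the structure of $H$ — identifying how many triangles there are, how many vertices dominate a given triangle, and how pairs of adjacent vertices see common out-neighbors inside $H$. Once these parameters are tabulated from the description in Figure~\ref{pic_H}, the two computations of $c$ collide, giving the desired contradiction. This numerical rigidity is exactly what makes $H$ an exceptional homogeneous digraph and what obstructs its occurrence as a neighborhood in a C-homogeneous digraph.
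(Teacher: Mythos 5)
Your strategy---double-counting the vertices attached to a directed triangle from two local copies of $H$---is not the paper's argument, and as it stands it has a genuine gap: the entire contradiction is deferred to ``precise bookkeeping'' that you never perform. You assert that once the parameters of $H$ are tabulated ``the two computations of $c$ collide,'' but you do not exhibit the two numbers, and it is far from clear that they disagree. Worse, the count itself is ill-posed. You define $c$ as the number of common \emph{out}-neighbors of the triangle $T=y_1y_2y_3$ and in the same breath call these the vertices that make $T$ a \emph{dominated} triangle; these are opposite notions (a dominator $v$ satisfies $v\in N^-(y_1)\cap N^-(y_2)\cap N^-(y_3)$). If $c$ counts dominators, then $x$ does contribute, but every dominator lies in $N^-(y_1)$ and hence \emph{no} dominator can be seen ``from the viewpoint of $N^+(y_1)$,'' so your second expression is vacuous; if instead $c$ counts common out-neighbors, then $x$ is not one of them and the term $r\geq 1$ is spurious. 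Either way one of your two evaluations of $c$ does not compute $c$. And even after repairing this, the natural counts are not obviously inconsistent: e.g.\ the dominators of $T$ consist of $x$, the unique dominator inside $N^+(x)\isom H$, and possibly a third vertex, all lying in the $3$-element set $N^-(y_1)\cap N^-(y_3)$ --- no contradiction without substantially more work, because C-homogeneity does not let you move these three vertices onto one another while controlling $y_2$.

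The paper's proof is a configuration argument rather than a count. After noting that $N^-(x)\isom H$ as well (since a dominated directed triangle embeds in $N^-(y)$ for $y\in N^+(x)$, and $H$ is the only candidate in Theorem~\ref{thm_HomDigraphs} containing one), it fixes $z\in N^-(y)\cap N^+(x)$ and proves the key claim that \emph{no neighbor of $y$ lies in $N^+(x)\cap N^+(z)$}: if $a$ were such a vertex, iterating an automorphism taking $D[x,y,z]$ to $D[x,a,z]$ produces a directed triangle inside $N^+(x)\cap N^+(z)$ (a set of size at most $3$ because vertices of $H$ have out-degree $3$), and then three vertices $v_1,v_2,v_3$ of $N^+(x)$ chosen with pairwise distinct relations to that triangle force, by pigeonhole on their adjacency to $z$, two isomorphic subdigraphs $D[z,x,v_i]\isom D[z,x,v_j]$ that no automorphism can exchange. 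The final contradiction is then immediate from the structure of $H$: for the edge $zy$ inside $N^+(x)\isom H$, two out-neighbors of $z$ in $N^+(x)$ are adjacent to $y$. If you want to salvage your counting idea you would at minimum have to make both sides of the count well-defined and actually evaluate them against the explicit structure of $H$; the paper's route avoids this entirely.
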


\begin{proof}
Let $x\in VD$ and suppose by symmetry that $N^+(x)\isom H$.
Then there is a dominated directed triangle embedded in $N^-(y)$ for all $y\in N^+(x)$.
Hence we also have $N^-(x)\isom H$.

If two vertices $x,y$ are adjacent, say $xy\in ED$, then $|N^+(x)\cap N^+(y)|\leq 3$ since $N^+(y)\isom H$.
Furthermore, there exists a vertex $z\in N^-(y)\cap N^+(x)$.

\begin{clm}\label{clm_NoHClm1}
No neighbor of~$y$ lies in $N^+(x)\cap N^+(z)$.
\end{clm}

\begin{proof}[Proof of Claim~\ref{clm_NoHClm1}]
Suppose that there is a vertex $a\in N^+(x)\cap N^+(z)\cap N(y)$.
By mapping $D[x,y,z]$ onto $D[x,a,z]$ by an automorphism of~$D$, we get recursively a directed cycle in $N^+(x)\cap N^+(z)$.
We already mentioned that $|N^+(x)\cap N^+(z)|\leq 3$. Hence there is a directed triangle in $N^+(x)\cap N^+(z)$.
Let $v_1$ be a vertex in $N^+(x)$ that has two neighbors in $N^+(x)\cap N^+(z)$, let $v_2$ be a vertex in $N^+(x)$ with $N^+(x)\cap N^+(z)\sub N^+(v_2)$, and let $v_3$ be a vertex in $N^+(x)$ with $N^+(x)\cap N^+(z)\sub N^-(v_3)$.
Such vertices exist because $N^+(x)\isom H$.
Then either two of these vertices are adjacent to~$z$---and hence lie in $N^-(z)$---or two of them are not adjacent to $z$.
Let $v_i,v_j$ ($i\neq j$) be two vertices either both of the first or both of the second kind.
Then $D[z,x,v_i]\isom D[z,x,v_j]$, and thus there is an automorphism of~$D$ mapping the first onto the second subdigraph.
But this is a contradiction by the choice of $v_i$ and $v_j$.
\end{proof}

As $N^+(x)\isom H$, there are two out-neighbors of~$z$ that are adjacent to~$y$ in contradiction to Claim~\ref{clm_NoHClm1}.
Thus the lemma is proved.
\end{proof}

The next case that we exclude is that neither the out- nor the in-neighborhood induces a subdigraph isomorphic to~$C_4$.

\begin{lem}\label{lem_NoC4}
Let $D$ be a connected locally finite C-homogeneous digraph and let $x\in VD$.
Then $N^+(x)\not\isom C_4$ and $N^-(x)\not\isom C_4$.
\end{lem}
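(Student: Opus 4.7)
The plan is to suppose, for contradiction, that $N^+(x)\cong C_4$; the case $N^-(x)\cong C_4$ will follow by arrow-reversal symmetry. Label $N^+(x)=\{y_1,y_2,y_3,y_4\}$ with $y_i\to y_{i+1}$ (indices mod~$4$), and focus attention on the arc $x\to y_1$.

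The first step is to extract quantitative information. Since C-homogeneity of $D$ implies $1$-arc transitivity, and within $N^+(x)$ the vertex $y_1$ has exactly one successor ($y_2$) and one predecessor ($y_4$), I deduce that for every arc $u\to v$ of $D$,
$$|N^+(u)\cap N^+(v)|=1\quad\text{and}\quad|N^+(u)\cap N^-(v)|=1.$$
Combined with the rigidity of $C_4$ as a directed graph (whose automorphism group $\Z/4\Z$ has trivial point stabiliser), the transitive action of $\Gamma_x$ on $N^+(x)$ forces $\Gamma_{x,y_1}$ to fix $N^+(x)$ pointwise, by Proposition~\ref{prop_Wielandt} or direct inspection. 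Since $\Gamma_{x,y_1}$ then also fixes the vertex $y_2\in N^+(y_1)$, the same argument applied inside $N^+(y_1)\cong C_4$ shows that $\Gamma_{x,y_1}$ fixes $N^+(y_1)$ pointwise as well. Writing $N^+(y_1)=\{y_2,a,b,c\}$ with the directed cycle $y_2\to a\to b\to c\to y_2$, each of $a,b,c$ is therefore rigidly determined by the arc $(x,y_1)$; and by the count above, $a,b,c\notin N^+(x)$, so the relation of each of $a,b,c$ to~$x$ is either an in-edge to~$x$ or non-adjacency.

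The heart of the argument is to derive a contradiction from the constraints forced on $N^-(x)$. An automorphism $\sigma$ of $D$ extending the isomorphism of transitive triangles $D[x,y_1,y_2]\to D[y_1,y_2,a]$ (sending $x\mapsto y_1$, $y_1\mapsto y_2$, $y_2\mapsto a$, and hence $y_3\mapsto b$, $y_4\mapsto c$) transports the edge $x\to y_4$ in $N^-(y_1)$ to an edge inside $N^-(x)$; so $N^-(x)$ contains an edge and, by Lemma~\ref{lem_NoH} together with Theorem~\ref{thm_HomDigraphs}, is isomorphic to one of $C_4$, $\bar K_n[C_3]$, or $C_3[\bar K_n]$. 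Using $\sigma$ and its iterates to track how the canonically determined triple $a,b,c$ sits with respect to $x$, and then comparing with each of the three admissible isomorphism types of $N^-(x)$, one exhibits in every case a pair of connected induced subdigraphs that must be mapped to each other by an automorphism of $D$ but whose extensions disagree about the size of some $N^+\cap N^+$ or $N^+\cap N^-$, contradicting the counts from the first step.

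The main obstacle lies in the case analysis of the previous paragraph. The $C_4$-structure forced on $N^+(y_1)$ imposes rigid constraints on the interplay between $N^+(x)$, $N^-(x)$, and $N^+(y_1)$, and each of the three candidate isomorphism types for $N^-(x)$ must be eliminated separately. Identifying the precise pair of isomorphic subdigraphs with incompatible extensions, especially in the case where all of $a,b,c$ send edges to~$x$ and $N^-(x)$ is itself a $C_4$, will require careful tracking of the iterated images $\sigma^k(a), \sigma^k(b), \sigma^k(c)$ and their neighborhoods, which is where the bulk of the detailed verification goes.
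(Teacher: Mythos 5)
There is a genuine gap: the proof is never actually completed. Your first two paragraphs are sound --- the counts $|N^+(u)\cap N^+(v)|=1$ and $|N^+(u)\cap N^-(v)|=1$ for every arc $u\to v$ do follow from $1$-arc transitivity and the structure of $C_4$, and since $\Aut(C_4)\isom\Z/4\Z$ acts regularly, $\Gamma_{x,y_1}$ indeed fixes $N^+(x)$ and then $N^+(y_1)$ pointwise. (One slip: $\sigma$ maps the edge $xy_4\subseteq N^-(y_1)$ into $N^-(y_2)$, not $N^-(x)$; you want $\sigma^{-1}$, or simply vertex-transitivity, to conclude that $N^-(x)$ contains an edge.) But the ``heart of the argument'' is only a promise. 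You assert that for each of the three admissible types of $N^-(x)$ one can ``exhibit a pair of connected induced subdigraphs whose extensions disagree,'' without naming a single such pair, and your closing paragraph concedes that this case analysis --- which is the entire content of the lemma --- remains to be done. A proof that defers its only nontrivial step to ``careful tracking of the iterated images'' has not established the statement; as written, nothing rules out that the tracking succeeds without contradiction.

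For comparison, the paper avoids iterating any automorphism and instead pins down the set $N^-(v_1)$ directly: it proves four concrete claims ($N^-(v_1)\cap N^-(v_2)=\{x\}$; $N^-(v_1)\cap N^+(v_2)=\es$; every vertex of $N^-(v_1)\cap N^+(v_4)$ is adjacent to $v_2$; yet one of $N^-(v_1)\cap N^+(v_2)$, $N^-(v_1)\cap N^+(v_4)$ must be nonempty), each by exhibiting an explicit pair of isomorphic induced subdigraphs on at most four vertices whose extensions are incompatible. Together these show $N^-(v_1)=\{x,v_4\}$, which is not homogeneous by Theorem~\ref{thm_HomDigraphs}. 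If you want to salvage your outline, the analogues of these four claims are exactly the ``pairs with incompatible extensions'' you would need to write down; until you do, the argument is a plan rather than a proof.
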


\begin{proof}
By regarding the digraph whose edges are directed in the inverse way, if necessary, we may suppose that $N^+(x)\isom C_4$.
Let us denote with $v_1,\ldots,v_4$ the four vertices in $N^+(x)$ with $v_iv_{i+1}\in ED$ for $1\leq i\leq 3$ and $v_4v_1\in ED$.
Since $x,v_4\in N^-(v_1)$ and since $N^-(v_1)$ is homogeneous, there is another vertex in $N^-(v_1)$ distinct from both $x$ and $v_4$.

We know by Lemma~\ref{lem_NoH} that $N^-(v_1)\not\isom H$.

\begin{clm}\label{clm_NoC4Clm1}
There is no other vertex than $x$ in $N^-(v_1)\cap N^-(v_2)$.
\end{clm}

\begin{proof}[Proof of Claim~\ref{clm_NoC4Clm1}]
Let us suppose that there is a vertex $y\in N^-(v_1)\cap N^-(v_2)$.
Then an immediate consequence of the C-homogeneity is $N^+(x)=N^+(y)$.
But then neither $xy$ nor $yx$ can be an edge of~$D$.
The subdigraph induced by $\{x,y,4\}$ is a subdigraph of $N^-(v_1)$ and thus $N^-(v_1)\isom \bar{K}_n[C_3]$ with $n>1$.
Then there is $z\in N^+(x)\cap N^-(v_1)$ which is distinct from~$v_4$.
This is not possible and hence no such $y$ exists.
\end{proof}

\begin{clm}\label{clm_NoC4Clm2}
There is no vertex in $N^-(v_1)\cap N^+(v_2)$.
\end{clm}

\begin{proof}[Proof of Claim~\ref{clm_NoC4Clm2}]
Suppose that there is a vertex $y\in N^-(v_1)\cap N^+(v_2)$.
If $y$ is neither adjacent to~$x$ nor to~$v_4$, then $N^-(v_1)$ has to be isomorphic to $\bar{K}_n[C_3]$ with $n>1$.
Then there is an automorphism $\alpha$ of~$D$ with $v_4^\alpha= v_4$, $v_1^\alpha=v_1$, and $v_2^\alpha=y$ and hence $x\neq x^\alpha\in N^-(v_1)\cap N^-(v_4)$.
This contradicts Claim~\ref{clm_NoC4Clm1} with $v_4$ and $v_1$ instead of $v_1$ and $v_2$.
So $y$ is adjacent to at least one of $x$ and $v_4$.

If $y$ is adjacent to $x$ but not to $v_4$, then $N^-(v_1)\isom C_4$ or $N^-(v_1)\isom H$ since an induced path of length $2$ embeds into $N^-(v_1)$.
As we already saw, only the first case can occur and then there is an automorphism $\alpha$ of~$D$ with $v_4^\alpha=y$, $v_1^\alpha=v_1$, and $v_2^\alpha=v_2$.
So $x\neq x^\alpha$ and $x^\alpha$ is a second vertex in $N^-(v_1)\cap N^-(v_2)$, which is impossible by Claim~\ref{clm_NoC4Clm1}.

If $y$ is adjacent to~$v_4$ but not to~$x$, then we distinguish two cases: in the first one $yv_4\in ED$. But then by C-homogeneity applied to $D[y,x,v_4]$ and $D[y,x,v_1]$ also $v_2\in N^+(y)$ contrary to the case we are discussing.
In the second case we have $v_4y\in ED$ and thus $N^-(v_1)\isom C_4$.
Then there has to be a vertex $z\in N^-(v_1)\sm\{v_4,x,y\}$.
If $z$ is not adjacent to~$v_2$, then there is an automorphism of~$D$ that maps $D[v_2,v_1,z]$ onto $D[v_2,v_1,v_4]$. Since this automorphism cannot fix $x$, the image of~$x$ also lies in $N^-(v_1)\cap N^-(v_2)$ contrary to Claim~\ref{clm_NoC4Clm1}.
If $v_2z\in ED$, then there is an automorphism of~$D$ that maps the cycle $D[v_2,y,v_1]$ onto $D[v_2,z,v_1]$. This is again a contradiction and the final contradiction in the case that $y$ is adjacent to~$v_4$ but not to~$x$ is given directly by Claim~\ref{clm_NoC4Clm1} since, if $zv_2\in ED$, then $z\in N^-(v_1)\cap N^-(v_2)$.

Let us now consider the case that both $x$ and $v_4$ are adjacent to~$y$.
By the same arguments as above there has to be $v_4 y\in ED$ and not $yv_4\in ED$.
By C-homogeneity we have $yv_3\in ED$ and since $y\notin N^+(x)$, we have $yx\in ED$.
But then $D[v_1,x,v_3]$ is a subdigraph of $N^+(y)$ but this digraph cannot be embedded into a $C_4$ and thus we just have proved the final contradiction of this claim.
\end{proof}

\begin{clm}\label{clm_NoC4Clm3}
There is no vertex in $N^-(v_1)\cap N^+(v_4)$ that is not adjacent to~$v_2$.
\end{clm}

\begin{proof}[Proof of Claim~\ref{clm_NoC4Clm3}]
Let us suppose that there exists $y\in N^-(v_1)\cap N^+(v_4)$ such that $y$ is not adjacent to~$v_2$.
Then $v_3$ is not adjacent to~$y$, too, and hence there is an automorphism $\alpha$ of~$D$ that maps $D[v_3,v_4,v_1]$ onto $D[v_3,v_4,y]$.
Since $y\notin N^+(x)$, we have $x\neq x^\alpha\in N^-(v_3)\cap N^-(v_4)$ and thus a contradiction to Claim~\ref{clm_NoC4Clm1}.
\end{proof}

\begin{clm}\label{clm_NoC4Clm4}
There is $N^-(v_1)\cap N^+(v_2)\neq\es$ or $N^-(v_1)\cap N^+(v_4)\neq\es$.
\end{clm}

\begin{proof}[Proof of Claim~\ref{clm_NoC4Clm4}]
Suppose that both intersections are empty.
Let $y\in N^-(v_1)$ with $x\neq y\neq v_4$.
If $x$ and $y$ are not adjacent, then $N^-(v_1)$ has to be isomorphic to $\bar{K}_n[C_3]$ for an $n>1$.
Hence there is $z\in N^+(v_4)\cap N^-(x)\cap N^-(v_1)$.
This is a direct contradiction to the assumptions.
Thus $x$ and $y$ has to be adjacent and hence $yx\in ED$.
So there is an induced path of length $2$ in $N^-(v_1)$ and thus $N^-(v_1)\isom C_4$ or $N^-(v_1)\isom H$, whereas the second case cannot occur by Lemma~\ref{lem_NoH}.
So $N^-(v_1)\isom C_4$.
Then both $D[v_4,v_1,v_2]$ and $D[y,v_1,v_2]$ are isomorphic subdigraphs of~$D$ and thus there is an automorphism $\alpha$ of~$D$ that fixes $v_1$ and $v_2$ and maps $v_4$ onto $y$.
We conclude $x^\alpha\in N^-(v_1)\cap N^-(v_2)$ which is untenable because of Claim~\ref{clm_NoC4Clm1}.
\end{proof}

By all the claims we showed that there is no vertex in $N^-(v_1)$ distinct from $x$ and from $v_4$ in contradiction to the homogeneity of $N^-(v_1)$ by Theorem~\ref{thm_HomDigraphs}.
Thus we proved Lemma~\ref{lem_NoC4}.
\end{proof}

\begin{lem}\label{lem_KnC3ImpliesH}
Let $D$ be a connected C-homogeneous digraph with $N^+(x)\isom \bar{K}_n[C_3]$ and $N^-(x)\isom \bar{K}_m[C_3]$ for all $x\in VD$ and with $m,n\geq 1$.
Then $m=n=1$.
\end{lem}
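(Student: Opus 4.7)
I would proceed by contradiction: assume $n \geq 2$ (the case $m \geq 2$ is handled symmetrically by passing to the reverse digraph, which is again C-homogeneous with the roles of $N^+$ and $N^-$ swapped). Fix a vertex $x$ and label the triangles of $N^+(x)$ as $T_1^+, T_2^+, \ldots, T_n^+$.

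The first step is to establish a rigidity statement: every arc of $D$ lies in a unique directed triangle, and every directed triangle has a unique dominating vertex. This is a double-counting argument on dominated directed triangles. Each vertex dominates exactly $n$ triangles — one for each triangle of its out-neighbourhood — while each triangle has $k$ dominating vertices, with $k$ constant by triangle-transitivity; also, each vertex is contained in $3nt$ directed triangles, where $t$ is the (constant, by 1-arc transitivity) number of directed triangles through each arc. Equating the two counts of dominated directed triangles gives $tk = 1$, hence $t = k = 1$. For any $y \in N^+(x)$, the unique triangle through $x \to y$ comes from the triangle of $x$ in $N^-(y) \cong \bar{K}_m[C_3]$ and has the form $\{x, y^-, z_y\}$, where $y^-$ is the predecessor of $y$ in its triangle in $N^+(x)$ and $z_y \in N^-(x)$. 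The resulting paired triangles $T_i^- = \{z_{y_{i,1}}, z_{y_{i,2}}, z_{y_{i,3}}\}$ are directed triangles in $N^-(x)$, and any overlap between two of them would force two distinct triangles through a single arc $z \to x$; hence the pairing $T_i^+ \leftrightarrow T_i^-$ is a bijection, which already yields $n = m$. Applying the same count in the reverse digraph also shows that every triangle $T$ has a unique co-dominating vertex $c(T)$ with $T \subset N^-(c(T))$. Setting $x' := c(T_1^+)$ and checking adjacencies in $\bar{K}_n[C_3]$ rules out $x' \in N^+(x)$ (otherwise $x'$ would share a triangle of $N^+(x)$ with every $y_{1,j}$, producing a self-loop) and $x' \in N^-(x)$ (each $y_{1,j}$ has a unique successor $z_{y_{1,j}}$ in $N^-(x)$, and these three are distinct). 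So $x'$ is at distance exactly $2$ from $x$, and by the dual construction $x'$ dominates the paired triangle $T_1^-$.

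The final step, which is the main obstacle, is to squeeze a contradiction out of $n \geq 2$. Pick $y \in T_1^+$ and $y' \in T_2^+$; these are non-adjacent with common predecessor $x$. The triangle of $N^+(y)$ containing its distinguished vertex $y^+ \in N^+(y) \cap N^+(x)$ is $\{y^+, x', z_y\}$, and because $t = 1$ forces $N^+(y) \cap N^-(x) = \{z_y\}$ and $N^+(y) \cap N^+(x) = \{y^+\}$, the remaining $n-1$ triangles of $N^+(y)$ must lie entirely outside $N(x)$; the same holds for $N^+(y')$. My plan is to analyse the set of common out-neighbours $N^+(y) \cap N^+(y')$ — all of which lie at distance $2$ from $x$, with the co-dominating vertex $x'$ contributing only when $x' = c(T_2^+)$ — and to combine this with the action of $\Aut(D)_{x,y,y'}$ on the remaining $n-2$ triangles of $N^+(x)$. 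The expected contradiction comes either from a second directed triangle being forced through a single arc (violating $t=1$) or from the induced action on the $n$ co-dominating vertices $c(T_i^+)$ yielding a subgroup of $S_{n-1}$ incompatible simultaneously with C-homogeneity and with Proposition~\ref{prop_Wielandt}. The difficulty is that the data at $x$ alone is internally consistent for every $n$, so the contradiction must emerge from the global interaction between the co-dominating vertices and the ``second'' triangles of the out-neighbourhoods of vertices in $N^+(x)$, rather than from any purely local count.
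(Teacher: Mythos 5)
The central difficulty of this lemma is to refute $n\geq 2$, and that is exactly the step your proposal does not carry out: the final paragraph is an announced \emph{plan} (``the expected contradiction comes either from \dots or from \dots''), and you concede yourself that the local data at $x$ is internally consistent for every $n$, so nothing established up to that point excludes anything. This is a genuine gap, not a detail. There are also problems earlier. Your double count equating $\sum_v(\text{triangles dominated by }v)$ with $\sum_T(\text{dominators of }T)$ is a count over all of $VD$, but the lemma does not assume $D$ finite (it is locally finite, nothing more); to make this work you must localize it, e.g.\ count pairs $(T,v)$ with $x\in T\subseteq N^+(v)$ at a fixed $x$, which gives $3ntk=3m$ since each $v\in N^-(x)$ dominates exactly one triangle through $x$, and combine it with the reversed count $3mtk'=3n$ to get $t^2kk'=1$, hence $t=k=k'=1$ and $n=m$. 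Note that in your order of deduction ($tk=1$ first, $n=m$ afterwards) the finiteness really is used. Finally, the assertion that the three vertices $z_{y_{i,j}}$ form a directed triangle of $N^-(x)$ is not justified (three distinct vertices of $\bar K_m[C_3]$ need not lie in one of its triangles), though it is also not needed once $n=m$ is obtained from the count.

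For comparison, the paper's proof is a short local argument that needs none of this machinery. For $xy\in ED$ pick $z\in N^-(x)\cap N^-(y)$, and let $\{y,a,b\}$ be the triangle of $N^+(x)$ through $y$ with $a\to y$; using the triangle of $N^-(y)$ through $x$ one arranges $a\in N^-(z)$ and $y\in N^+(z)$, and one checks that $b$ is non-adjacent to $z$. Thus $a,b,y$ realize all three possible relations to $z$. If $n\geq 2$ and $y'$ lies in another triangle of $N^+(x)$, then $D[z,x,y']\isom D[z,x,v]$ for the appropriate $v\in\{a,b,y\}$, and C-homogeneity carries the triangle of $v$ (which meets $N^+(z)$ in $y$) onto the triangle of $y'$, so one may assume $y'\in N^+(z)$. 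But then $N^+(z)\isom\bar K_n[C_3]$ contains $x$ together with two non-adjacent out-neighbours $y,y'$ of $x$, impossible since every vertex of $\bar K_n[C_3]$ has exactly one successor inside it. If you want to salvage your approach, it is a contradiction of this local flavour--two forbidden successors inside one homogeneous neighbourhood--that you should aim for, rather than the global interaction of co-dominating vertices.
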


\begin{proof}
Let $xy\in ED$.
Then there exists $z\in N^-(y)\cap N^-(x)$.
By regarding $N^-(y)$, we obtain an $a\in N^-(y)\cap N^+(x)$ with $az\in ED$.
Let $b$ be the third vertex of $N^+(x)$ in that isomorphic image of~$C_3$, that contains $y$ and $a$.
We have neither $zb$ nor $bz$ in $ED$ since otherwise there is an edge either in $N^+(x)\cap N^+(z)$ or in $N^+(x)\cap N^-(z)$ and by applying the C-homogeneity we obtain the whole isomorphic image of $C_3$, $D[a,b,y]$, in $N^+(x)\cap N^+(z)$ or in $N^+(x)\cap N^-(z)$ which is impossible.

Let us suppose that $n>1$.
Then there exists a vertex $y'\in N^+(x)$ that is distinct from $a,b$, and $y$.
Then there is a vertex $v\in\{a,b,y\}$ such that $D[z,x,v]\isom D[z,x,y']$ and hence the isomorphic image of~$C_3$ in $N^+(x)$ that contains $y'$ contains a vertex of $N^+(z)$.
We may suppose that $y'\in N^+(z)$.
But then $D[y,x,y']$ is a digraph that cannot be embedded into $N^+(z)$.
So $n\not > 1$.
By a symmetric argument we also have $m=1$.
\end{proof}

\begin{lem}\label{lem_C3KnImpliesHKn}
Let $D$ be a connected locally finite C-homogeneous digraph and $x\in VD$.
If $N^+(x)\isom C_3[\bar{K}_n]$ or if $N^-(x)\isom C_3[\bar{K}_n]$ for an $n\geq 1$, then there is $D\isom H[\bar{K}_n]$.
\end{lem}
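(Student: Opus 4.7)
The plan is to exhibit an $\Aut(D)$-invariant partition of $VD$ into independent $n$-sets and verify that the associated quotient digraph is isomorphic to $H$; this yields $D\isom H[\bar K_n]$.

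First I would upgrade the hypothesis symmetrically. Vertex-transitivity (which follows from C-homogeneity) gives $N^+(v)\isom C_3[\bar K_n]$ at every $v\in VD$; to deduce $N^-(v)\isom C_3[\bar K_n]$ I pick $y\in N^+(x)$ and inspect $N^-(y)$. This induced subdigraph contains $x$ together with the full $\bar K_n$-class of $N^+(x)$ whose vertices send edges to $y$, giving a pattern that, by Theorem~\ref{thm_HomDigraphs} together with Lemmas~\ref{lem_NoH}, \ref{lem_NoC4} and \ref{lem_KnC3ImpliesH}, forces $N^-(y)\isom C_3[\bar K_n]$.

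Let $V_1,V_2,V_3$ be the three $\bar K_n$-classes of $N^+(x)$, arranged so that $V_i\to V_{i+1}$. The heart of the argument is to show that each $V_i$ is a \emph{module} of $D$: every $w\in VD\setminus V_i$ has a uniform relation (all out-edges, all in-edges, or no edges) to the vertices of $V_i$. Granting this, the relation $y\sim y'$ iff $y=y'$ or $y,y'$ are non-adjacent with $N^+(y)=N^+(y')$ and $N^-(y)=N^-(y')$ becomes an equivalence relation whose classes are independent $n$-sets extending the $V_i$'s. The quotient $D_\sim$ inherits C-homogeneity from $D$, has out- and in-neighbourhoods of the form $C_3$ (the quotient of $C_3[\bar K_n]$ by its $\bar K_n$-parts), and by Theorem~\ref{thm_HomDigraphs} is identifiable with $H$, which is the only item on Lachlan's list with $N^+\isom N^-\isom C_3$. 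Hence $D\isom D_\sim[\bar K_n]\isom H[\bar K_n]$.

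Proving the module property is the main obstacle. The easy half is that $V_1\cap N^+(w)$ and $V_1\cap N^-(w)$ must each lie inside a single $\bar K_n$-class of $N^+(w)$ respectively $N^-(w)$, since $V_1$ is independent while distinct $\bar K_n$-classes of $C_3[\bar K_n]$ are joined by edges. The harder half is excluding the mixed case where $w$ is adjacent to a proper non-empty subset of $V_1$. Here one uses C-homogeneity on a connected subdigraph like $\{x\}\cup V_2\cup V_3\cup\{y,y'\}$ with $y,y'\in V_1$: the map fixing $x, V_2, V_3$ pointwise and swapping $y,y'$ is an isomorphism of induced subdigraphs, hence extends to an automorphism $\sigma\in\Aut(D)$. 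Combining such $\sigma$ with the rotational automorphisms coming from $\Aut(C_3[\bar K_n])\isom S_n\wr C_3$ lifted through C-homogeneity provides enough symmetry on vertices adjacent to $V_1$ to rule out a split adjacency pattern, yielding the required uniformity.
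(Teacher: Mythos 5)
Your overall strategy mirrors the paper's: define an equivalence relation whose classes are independent $n$-sets with uniform adjacency between classes, observe $D\isom D_\sim[\bar K_n]$, and identify the C-homogeneous quotient $D_\sim$ (which has out-neighborhoods $\isom C_3$) with $H$. But the step you yourself single out as ``the main obstacle'' --- that every vertex $w$ outside a class has a uniform relation to all of that class --- is exactly where your argument does not close. The automorphism $\sigma$ extending the swap of $y,y'\in V_1$ over the pointwise-fixed set $\{x\}\cup V_2\cup V_3$ does exist by C-homogeneity, but it yields no contradiction with a split adjacency pattern: if $w$ is adjacent to $y$ and not to $y'$, then $\sigma$ merely produces some vertex $w^\sigma$ adjacent to $y'$ and not to $y$, which is perfectly consistent. ``Combining with the rotational automorphisms \dots\ provides enough symmetry'' is an assertion, not an argument; nothing you write forces an automorphism to move $y$ to $y'$ while controlling $w$.

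The paper closes this gap more locally. It fixes $y\in N^+(x)$ and $z\in N^-(x)\cap N^-(y)$ and proves that both $N^+(x)\cap N^+(z)$ and $N^+(x)\cap N^-(z)$ are independent sets of size exactly $n$ (Claims~\ref{clm_C3KnImpliesHKn1}--\ref{clm_C3KnImpliesHKn3}): for instance, if $a,b\in N^+(x)\cap N^-(z)$ were adjacent, the connected induced subdigraphs $D[z,x,a]$ and $D[z,x,b]$ would be isomorphic, yet no automorphism of $D$ could map one onto the other because $a$ and $b$ sit asymmetrically relative to the vertices of $N^+(x)\cap N^+(z)$. Only with this local uniformity in hand does it define the global relation $a\sim b:\Lra N^-(a)=N^-(a)\cap N^-(b)=N^-(b)$, which is an equivalence relation for free and whose classes are automatically independent. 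If you want to keep your module formulation, you need a concrete pair of isomorphic connected induced subdigraphs that distinguishes the split case --- something playing the role of $D[z,x,a]$ versus $D[z,x,b]$ --- rather than an appeal to symmetry. A smaller soft spot: identifying $D_\sim$ with $H$ is not a direct application of Theorem~\ref{thm_HomDigraphs}, since $D_\sim$ is only known to be C-homogeneous rather than homogeneous; a short separate argument is needed there (which, to be fair, the paper also only sketches).
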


\begin{proof}
We assume that $N^+(x)\isom C_3[\bar{K}_n]$ for an $n\geq 1$.
Let $y\in N^+(x)$.
Then $x$ together with $n$ vertices of $N^+(x)$ lie in $N^-(y)$ and hence $N^-(y)\isom C_3[\bar{K}_m]$ for an $m\geq n$ or $n=1$ and $N^-(y)\isom \bar{K}_m[C_3]$ for an $m\geq 1$ which has to be equal to~$1$ by Lemma~\ref{lem_KnC3ImpliesH}, so in each case $N^-(y)\isom C_3[\bar{K}_m]$ for an $m\geq n$.
By symmetry we conclude $m=n$.
Then there is a vertex $z\in N^-(x)\cap N^-(y)$.

\begin{clm}\label{clm_C3KnImpliesHKn1}
$N^+(x)\cap N^+(z)$ is an independent set of cardinality $n$.
\end{clm}

\begin{proof}[Proof of Claim~\ref{clm_C3KnImpliesHKn1}]
This is a direct consequence of the fact that $N^+(z)$ is isomorphic to $C_3[\bar{K}_n]$.
\end{proof}

An immediate consequence of the C-homogeneity of~$D$ is $N^+(x)\cap N^-(z)\neq\es$.

\begin{clm}\label{clm_C3KnImpliesHKn2}
$N^+(x)\cap N^-(z)$ is an independent set of cardinality $n$.
\end{clm}

\begin{proof}[Proof of Claim~\ref{clm_C3KnImpliesHKn2}]
We already know that the set $N^+(x)\cap N^-(z)$ is not empty.
So let us suppose that there is an edge $ab$ with both of its incident vertices in $N^+(x)\cap N^-(z)$.
Then the digraphs $D[z,x,a]$ and $D[z,x,b]$ are isomorphic and hence there is an automorphism of~$D$ mapping the first onto the second one.
As a consequence of Claim~\ref{clm_C3KnImpliesHKn1} both, $a$ and $b$, have to be adjacent to all the vertices in $N^+(x)\cap N^+(z)$.
Hence there is $y'a\in ED$ and $by'\in ED$ for all $y'\in N^+(x)\cap N^+(z)$.
Thus no such automorphism can exist and we conclude that no such edge $ab$ can exist.
Since there are at least $n$ vertices in $N^-(y)$ that lie in $N^+(x)\cap N^-(z)$ and since there are at most $n$ vertices in $N^+(x)$ that are pairwise not adjacent, the assertion follows.
\end{proof}

\begin{clm}\label{clm_C3KnImpliesHKn3}
There is $|N^+(x)\cap N^+(z)|=n=|N^+(x)\cap N^-(y)|$.
\end{clm}

\begin{proof}[Proof of Claim~\ref{clm_C3KnImpliesHKn3}]
This is a direct consequence of the fact that the subdigraph induced by $N^+(x)$ is isomorphic to $C_3[\bar{K}_n]$.
\end{proof}

\begin{clm}\label{clm_C3KnImpliesHKn4}
There is an equivalence relation $\sim$ on $VD$ whose equivalence classes have precisely $n$ independent vertices each and such that $D_\sim$ is isomorphic to $H$ and $D_\sim[\bar{K}_n]$ is isomorphic to $D$.
\end{clm}

\begin{proof}[Proof of Claim~\ref{clm_C3KnImpliesHKn4}]
Let us define a relation $\sim$ via
$$a\sim b:\Lra N^-(a)=N^-(a)\cap N^-(b)=N^-(b).$$
Then $\sim$ is obviously an equivalence relation.

If we consider two of the equivalent classes of~$\sim$, then all of the edges between these two classes must be directed in the same direction and furthermore the digraph induced by these two classes is a complete bipartite digraph.
Hence $D$ induces a C-homogeneous digraph on $D_\sim$ with $D\isom D_\sim[\bar{K}_n]$.

It is a straightforward argument to show that $D\isom H$ if $N^+(x)\isom C_3$.
So if we consider $D_\sim$, then we may instead assume that $N^+(x)\isom C_3$ for all $x\in D_\sim$ and hence we obtain the isomorphism.
\end{proof}

The lemma is a direct consequence of the previous claim.
\end{proof}

\section{The~independent~case}\label{sec_IndCase}

In this section we consider the situation that every out-neighborhood---and hence by the results of Section~\ref{sec_NonIndCase} also every in-neighborhood---is independent.
The first case we classify is when every vertex has in- or out-degree $1$.

\begin{lem}\label{lem_IndependentDegree1}
Let $D$ be a locally finite connected C-homogeneous digraph and let $x\in VD$.
If $N^+(x)$ or $N^-(x)$ consists of precisely one vertex, then $D$ is either an infinite tree or a directed cycle.
\end{lem}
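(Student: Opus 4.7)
The plan is to extract vertex-transitivity from C-homogeneity, reduce by edge-reversal to the case of out-degree one, and then split on the in-degree.

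C-homogeneity extends the trivial isomorphism between any two one-vertex induced subdigraphs to an automorphism, so $D$ is vertex-transitive and in particular $d^+$ and $d^-$ are constant on $VD$. Since the hypotheses of the lemma are symmetric in $N^+$ and $N^-$, I may, by passing to the edge-reversed digraph if needed, assume $d^+=1$; write $s\colon VD\to VD$ for the unique-successor map and set $k:=d^-$.

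The case $k=0$ is immediate: for any $v$ the vertex $s(v)$ has $v$ as a predecessor, contradicting $d^-=0$. If $k=1$, then $s$ is a bijection, so $D$ is a disjoint union of finite directed cycles and bi-infinite directed paths; by connectedness it is either a single directed cycle or a single bi-infinite directed path, and the latter is an infinite tree.

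The substantial case is $k\geq 2$, where I would show that $D$ is an infinite tree (it is automatically infinite since, in the absence of forward cycles, the iterates $v,s(v),s^2(v),\ldots$ are forced to be distinct). First, $D$ has no directed cycle: if $s^m(v)=v$ for some $v$ and some $m\geq 1$, vertex-transitivity yields $s^m=\mathrm{id}$, so $s$ is bijective, forcing $d^-=1$ and contradicting $k\geq 2$. Second, suppose for contradiction that $v_0v_1\cdots v_{m-1}v_0$ is an undirected cycle in $D$ (necessarily $m\geq 3$, since $D$ is asymmetric). Label each cycle edge by $\epsilon_i=+1$ or $-1$ according as it is directed $v_i\to v_{i+1}$ or $v_{i+1}\to v_i$ (indices mod $m$). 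At $v_i$, the cycle edge to $v_{i-1}$ is outgoing iff $\epsilon_{i-1}=-1$, and the cycle edge to $v_{i+1}$ is outgoing iff $\epsilon_i=+1$; since $d^+(v_i)=1$ these cannot both hold, so the pattern $(\epsilon_{i-1},\epsilon_i)=(-1,+1)$ is forbidden for every $i$. A cyclic $\pm 1$-sequence without a $-1$-to-$+1$ transition must be constant, so either all $\epsilon_i=+1$ or all $\epsilon_i=-1$, making $v_0v_1\cdots v_{m-1}v_0$ a directed cycle and contradicting the first step.

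The main obstacle is the second half of the $k\geq 2$ case, namely ruling out undirected cycles once directed ones are eliminated; the orientation-labelling device reduces this to the cyclic monotonicity observation. The standing assumption of Section~\ref{sec_IndCase} that in-neighborhoods be independent is not needed as extra input here, since it is automatic under $d^+=1$: an edge $a\to b$ with $a,b\in N^-(v)$ would give both $s(a)=b$ and $s(a)=v$, forcing $b=v$, a contradiction.
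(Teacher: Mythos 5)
Your proof is correct and follows essentially the same route as the paper's: reduce by symmetry to out-degree one, split on whether a directed cycle exists, and in the acyclic case rule out undirected cycles by noting that a cycle which is not directed would force a vertex of out-degree at least two. Your treatment of the cyclic case via $s^m=\mathrm{id}$ and bijectivity of the successor map is a clean substitute for the paper's argument that no vertex off the cycle can have a successor on it, but the overall structure and key observations coincide.
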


\begin{proof}
By symmetry we may assume that $N^+(x)$ consists of precisely one vertex.
Let $F$ be the subdigraph of~$D$ that is induced by all descendants of~$x$.
Then either this contains a directed ray or a directed cycle.
If $F$ contains a directed cycle, then, by the C-homogeneity, $x$ has to lie on such a cycle, say $C$.
Suppose that a vertex $y$ exists that lies not on~$C$ but has a successor on~$C$.
Let $\alpha$ be an automorphism of~$D$ with $x^\alpha=y$.
Then $C^\alpha\cap C$ contains the successor of~$y$ and hence $y$ has to lie on~$C$ since every vertex of~$C$ has its unique successor on~$C$.
So in this case we conclude that $D$ is a directed cycle.

We now assume that no directed cycle lies in~$F$.
Let $H$ be the digraph that is induced by all ancestors of vertices of~$F$.
As $|N^+(x)|=1$ and as $D$ is connected, $H$ has to be the whole digraph $D$.
Now let us suppose that there is an undirected cycle in~$D$.
Then there has to be a vertex on that cycle that has out-degree at least~$2$ since $F$ is a ray, contrary to the assumption.
Hence $D$ is an infinite tree.
\end{proof}

For the investigation of the C-homogeneous digraphs with bipartite reachability digraph we use the classification of the locally finite C-homogeneous bipartite graphs.
A bipartite graph $G$ (with bipartition $X\cup Y$) is {\em connected-homogeneous bipartite}, or short {\em C-homogeneous bipartite}, if every isomorphism between two isomorphic connected finite subgraphs $A$ and $B$ of~$G$ that preserves the bipartition (that means $VA\cap X$ is mapped onto $VB\cap X$ and $VA\cap Y$ is mapped onto $VB\cap Y$) extends to an automorphism of $G$ that preserves the bipartition.

The next lemma is due to Gray and M\"oller \cite[Lemma 4.3]{GM-CHomDigraphs}, see also \cite[Lemma 5.4]{HH-ConHomDigraphs}, and it underlines our interest in the C-homogeneous bipartite graphs.

\begin{lem}\label{lem_GM4.3}
Let D be a connected C-homogeneous digraph such that $\Delta(D)$ is bipartite.
Then the underlying undirected graph of $\Delta(D)$ is a connected C-homoge\-neous bipartite graph.\qed
\end{lem}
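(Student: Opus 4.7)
The plan is to deduce the C-homogeneity of the underlying undirected graph of $\Delta(D)$ directly from the C-homogeneity of $D$. Since every $1$-arc of $D$ is a connected finite subdigraph, C-homogeneity of $D$ forces $1$-arc transitivity, so Proposition~\ref{prop_CPW} yields that $\Delta(D)$ is $1$-arc transitive and connected (whence its underlying undirected graph is connected too). I would then fix an edge $e_0\in ED$ and identify $\Delta(D)$ with the reachability subdigraph $\Gamma:=\langle\AF(e_0)\rangle$, labelling its bipartition $X\cup Y$ so that $X$ has empty in-neighborhood in $\Gamma$; consequently every edge of $\Gamma$ is oriented from $X$ to $Y$.

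The central observation is that, thanks to this uniform orientation, a bipartition-preserving isomorphism $\varphi\colon\bar A\to\bar B$ between two connected finite subgraphs of the underlying undirected graph of $\Gamma$ carries exactly the same information as a digraph isomorphism between the corresponding subdigraphs $A$ and $B$ of $\Gamma$ (and hence of $D$). Assume first that $A$ contains at least one edge. Then $A$ and $B$ are connected finite subdigraphs of $D$, so C-homogeneity of $D$ extends $\varphi$ to some $\alpha\in\Aut(D)$. Because $\alpha$ maps alternating walks to alternating walks, it setwise permutes the reachability classes; since $A$ contains an edge $e\in\AF(e_0)$ whose image $\alpha(e)$ lies in $B\subseteq\AF(e_0)$, we must have $\alpha(\AF(e_0))=\AF(e_0)$. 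Hence $\alpha$ restricts to a bipartition-preserving automorphism of $\Gamma$ extending $\varphi$, as desired.

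The remaining case, in which $A=\{a\}$ and $B=\{b\}$ are single vertices lying in the same bipartition class, reduces to the previous one after enlarging: since $\Gamma$ is connected and contains an edge, $a$ has a neighbor $c$ and $b$ has a neighbor $d$ on the same side of the bipartition as $c$, and applying the previous paragraph to the bipartition-preserving $1$-arc iso $a\mapsto b,\ c\mapsto d$ yields an automorphism of $\Gamma$ sending $a$ to $b$. The only genuine obstacle in the whole argument is the step establishing $\alpha(\AF(e_0))=\AF(e_0)$; but this is immediate from the fact that automorphisms of $D$, preserving alternating walks, preserve the reachability equivalence.
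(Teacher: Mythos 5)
The paper itself offers no proof of this lemma (it is quoted from Gray and M\"oller), so your argument must stand on its own. Its overall shape is reasonable, but there is a genuine gap at the central step. C-homogeneity of $D$ only lets you extend isomorphisms between connected finite \emph{induced} subdigraphs of $D$, whereas you apply it to $A$ and $B$, which you only know to be subdigraphs of $\Gamma=\langle\AF(e_0)\rangle$. For that appeal to be legitimate you need two things: that $A$ and $B$ are induced subdigraphs of $D$ — i.e.\ that $D$ has no edges between vertices of $V\Gamma$ other than the edges in $\AF(e_0)$, so that induced subgraphs of the underlying undirected graph of $\Gamma$ really are induced subdigraphs of $D$ — and that $\varphi$ preserves all such edges. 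You verify the latter only for the edges of $\Gamma$ itself, via the uniform orientation from $X$ to $Y$. But a priori $D$ could contain edges joining two vertices on the same side of $\Gamma$, or edges directed from the sink side $Y$ back to the source side $X$. Such edges lie in other reachability classes, are invisible in the underlying undirected graph of $\Delta(D)$, and need not be respected by a bipartition-preserving graph isomorphism $\varphi$; in that case $\varphi$ is not an isomorphism of induced subdigraphs of $D$ and C-homogeneity gives you nothing.

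Part of this can be repaired cheaply: any two out-edges $uy$, $uy'$ of a vertex $u$ lie on the alternating walk $y\,u\,y'$, so all out-edges of a vertex lie in a single reachability class; hence every vertex of $X$ has all its out-edges in $\AF(e_0)$, which rules out edges inside $X$, edges inside $Y$, and $D$-edges from $X$ to $Y$ outside $\AF(e_0)$. What remains, and what your proof does not address, is the possibility of an edge of $D$ directed from a vertex of $Y$ to a vertex of $X$. Excluding this is precisely the assertion that the reachability digraph is an induced subdigraph of $D$ — a fact the paper itself invokes later (``we remember that the reachability digraphs are induced subdigraphs'') and which requires its own argument. Until that is supplied, the extension step, and with it the proof, is incomplete.
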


The next result is the classification result of the C-homogeneous graphs.
The proof of Theorem~\ref{thm_GMoThm4.6} which is due to Gray and Möller \cite[Theorem 4.6]{GM-CHomDigraphs} uses the classification of the homogeneous bipartite graphs, see~\cite{GGK}.

\begin{thm}\label{thm_GMoThm4.6}
Let $G$ be a locally finite connected graph.
Then $G$ is C-homoge\-neous bipartite if and only if $G$ is isomorphic to one of the following graphs:
\begin{enumerate}[{\em (i)}]
\item a cycle $C_{2m}$ with $m\geq 2$,
\item an infinite semiregular tree $T_{k,l}$ with $k,l\geq 2$,
\item a complete bipartite digraph $K_{m,n}$ with $m,n\geq 1$,
\item a complement of a perfect matching $CP_k$ with $k\geq 2$.\qed
\end{enumerate}
\end{thm}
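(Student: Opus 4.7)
The plan is to exploit the strong symmetry forced by C-homogeneity to reduce to a short list of parameter sets, then identify each with one of (i)--(iv). First I would observe that in a bipartite graph $G$, any $2$-arc $y_1 x y_2$ with $y_1\neq y_2$ is an induced path of length~$2$, hence a connected induced subgraph, so bipartite C-homogeneity upgrades immediately to $2$-arc-transitivity of the bipartition-preserving automorphism group $\Gamma$. This forces $G$ to be semiregular with valencies $(k,l)$, and for same-side vertices $x_1,x_2$ at distance~$2$ the number $\mu:=|N(x_1)\cap N(x_2)|$ depends only on the side, giving invariants $\mu_X,\mu_Y$.

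Next I would split on whether $G$ contains a cycle. If $G$ is acyclic, then it is a connected locally finite semiregular tree, and $k,l\geq 2$ (else it would be an infinite one-way path with some vertex of degree~$1$, violating vertex-transitivity on each side), so $G\isom T_{k,l}$, case~(ii). If $G$ has girth $g\geq 6$, so $\mu_X=\mu_Y=1$, I would argue $k=l=2$ as follows: if some vertex $v$ on a shortest cycle $C$ had an extra neighbor $w\notin C$, then $2$-arc-transitivity produces an automorphism sending a consecutive triple of $C$ to one through $w$, creating a second cycle of length $\leq g$ sharing an edge with $C$ and yielding either a shorter closed walk or a same-side pair with two common neighbors, contradicting the girth bound or $\mu=1$. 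Thus $G$ is a $2$-regular connected graph, i.e.\ $C_{2m}$, case~(i).

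It remains the case $g=4$, in which $\mu_X,\mu_Y\geq 2$. Here I would analyse the action of the pointwise stabilizer $\Gamma_{x_1,x_2}$ of two same-side vertices at distance~$2$ on $N(x_1)\cap N(x_2)$ and on the symmetric difference $N(x_1)\triangle N(x_2)$, using $2$-arc-transitivity to transfer between configurations. The target is to show $\mu_X\in\{k-1,k\}$, and symmetrically for $\mu_Y$. If $\mu_X=k$ then every two $X$-vertices have identical neighborhoods, so by connectedness $N(x)=Y$ for every $x\in X$, giving $G\isom K_{m,n}$, case~(iii). If $\mu_X=k-1$ then each $X$-vertex misses exactly one $Y$-vertex, and a bijectivity argument (using transitivity of $\Gamma$ on the ``non-edges'') identifies the missing pattern as a perfect matching, yielding $G\isom CP_k$, case~(iv).

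The main obstacle I expect is ruling out intermediate values $2\leq\mu_X\leq k-2$ in the girth-$4$ case. C-homogeneity only sees isomorphisms between \emph{connected} induced subgraphs, while the data distinguishing intermediate $\mu_X$ from $\mu_X\in\{k-1,k\}$ lives in non-edges between the two parts. The cleanest route, presumably the one taken in~\cite{GM-CHomDigraphs}, is to bootstrap $2$-arc-transitivity together with the $\mu_X\geq 2$ assumption up to full bipartite homogeneity (by showing that the extra regularity lets one extend every partial bipartite isomorphism, connected or not), and then invoke the classification of homogeneous bipartite graphs of~\cite{GGK}, on which only $K_{m,n}$ and $CP_k$ survive in the locally finite range.
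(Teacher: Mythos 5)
First, a point of orientation: the paper does not prove Theorem~\ref{thm_GMoThm4.6} at all --- it is stated without proof as a quoted result of Gray and M\"oller \cite[Theorem 4.6]{GM-CHomDigraphs}, and the only thing the paper reveals about the proof is that it uses the classification of homogeneous bipartite graphs from \cite{GGK}. Measured against that remark, your overall architecture (bipartite C-homogeneity gives $2$-arc-transitivity and the invariants $\mu_X,\mu_Y$; split on girth; reduce the girth-$4$ case to \cite{GGK}) is sensible and apparently close to the route actually taken in the source.

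As a proof, however, the sketch has two genuine gaps. (a) In the girth-$g\geq 6$ case your contradiction does not close: an automorphism carrying a $2$-arc of a shortest cycle $C$ to one leaving $C$ produces a second $g$-cycle sharing a short path with $C$, but the resulting closed walk has length $2(g-2)\geq g$, so neither the girth bound nor $\mu=1$ is violated. Some obstruction must eventually appear, but it cannot come from short arcs: the Heawood graph is bipartite, $3$-regular, of girth $6$ and $4$-arc-transitive, so any argument killing degree $\geq 3$ here must exhibit two isomorphic \emph{longer} connected induced configurations (e.g.\ a $g$-cycle with pendant vertices attached at suitably chosen positions) that no automorphism can identify; this is exactly the step you have not supplied. (b) In the girth-$4$ case you correctly isolate the elimination of intermediate values $2\leq\mu_X\leq k-2$ as the main obstacle, but your resolution is to ``bootstrap to full bipartite homogeneity and invoke \cite{GGK}''. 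That bootstrap --- extending arbitrary, possibly disconnected, bipartition-preserving partial isomorphisms starting only from the connected ones --- is precisely where the intermediate values of $\mu$ get excluded, so as written the argument names the step that would solve the problem without performing it. It needs an explicit induction (say on the number of components of the configuration), using the $\mu$-regularity and the transitivity on non-edges that you establish earlier, before the appeal to \cite{GGK} is legitimate.
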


In the following we will at first suppose that our digraphs suffices the assumptions of~Theorem~\ref{thm_GMoThm4.6} to obtain partial classification results which will be completed in Section~\ref{sec_Imprimitive}.
Thereafter we shall prove in the lemmas~\ref{lem_NoC3ThenBipReach} and \ref{lem_C3ThenBipReach} that the connected locally finite C-homogeneous digraphs indeed always satisfy these assumptions.

\begin{lem}\label{lem_DeltaBipThenFinite}
Let $D$ be a locally finite connected C-homogeneous digraph such that $N^+(x)$ and $N^-(x)$ are independent sets for all $x\in VD$.
If $\Delta(D)$ is bipartite, then either $\Delta(D)$ is a finite digraph or $C_3$ embeds into~$D$ and $\Delta(D)\isom T_{2,2}$.
\end{lem}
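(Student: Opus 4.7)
The plan is to first apply Lemma~\ref{lem_GM4.3} and Theorem~\ref{thm_GMoThm4.6}: since $\Delta(D)$ is bipartite, its underlying undirected graph is locally finite and connected-homogeneous bipartite, hence isomorphic to one of $C_{2m}$, $T_{k,l}$, $K_{m,n}$, or $CP_k$. The first, third, and fourth are finite, so in those cases $\Delta(D)$ itself is finite, giving the first alternative of the conclusion. The only remaining possibility is $\Delta(D)\isom T_{k,l}$ with $k,l\geq 2$, and for that case I would show $C_3\hookrightarrow D$ and $k=l=2$.

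Assume $\Delta(D)\isom T_{k,l}$; then $d^+=k$ and $d^-=l$, and the absence of $4$-cycles in the tree $T_{k,l}$ translates to the \emph{local tree property}: for distinct $y_1,y_2\in N^+(x)$, $N^-(y_1)\cap N^-(y_2)=\set{x}$, and dually for predecessors. I would first prove $C_3\hookrightarrow D$, by contradiction. If no $C_3$ embeds then $N^+(y)\cap N^-(x)=\es$ for every edge $xy$, which combined with independence of $N^\pm(x)$ forces $N(x)=N^+(x)\cup N^-(x)$ itself to be independent for every $x$, so the underlying undirected graph $\Gamma$ of $D$ is triangle-free. Using the local tree property together with the symmetric-group actions of $\Aut(D)_x$ on $N^\pm(x)$ (a consequence of C-homogeneity applied to the "star" subdigraphs at $x$), every orientation of an undirected $4$-cycle in $D$ can be ruled out, and then inductively $\Gamma$ must be a tree. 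Finally, mimicking the rigidity obstruction exemplified by the Cayley digraph of the free group $F_2$, one would exhibit an isomorphism between two isomorphic directed $2$-paths $D[x,y,w_1]$ and $D[x,y,w_2]$ (for distinct $w_1,w_2\in N^+(y)$) that fails to extend to any automorphism of a locally finite oriented tree with $k,l\geq 2$ and independent neighborhoods, contradicting C-homogeneity.

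Given $C_3\hookrightarrow D$, the $1$-arc-transitivity of $D$ produces for every $y\in N^+(x)$ a vertex $z_y\in N^+(y)\cap N^-(x)$. For $y_1\neq y_2$ we have $z_{y_1}\neq z_{y_2}$: otherwise $\set{x,y_1,y_2}\subseteq N^-(z)$, contradicting the independence of $N^-(z)$ because $x\to y_1$ is an edge. This yields $l\geq k$, and the mirror argument applied to the reverse digraph (whose $\Delta$ is $T_{l,k}$) gives $k\geq l$, so $k=l$. Suppose $k=l\geq 3$, and take $y_1,y_2,y_3\in N^+(x)$ with triangle-partners $z_1,z_2,z_3\in N^-(x)$. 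Applying C-homogeneity to the subdigraphs $D[x,y_1,y_j,z_1]$ for $j\in\set{2,\dots,k}$ forces the dichotomy that $y_j\to z_1$ either holds for all $j\neq 1$ or for none. In the "all" case one obtains $\set{y_1,\dots,y_k}\subseteq N^-(z_1)\cap N^-(z_2)$, contradicting the local tree property applied to $z_1,z_2\in N^+(y_1)$; in the "none" case the resulting "matching" structure between $N^+(x)$ and $N^-(x)$ admits the same rigidity obstruction as in the no-$C_3$ argument. Therefore $k=l=2$ and $\Delta(D)\isom T_{2,2}$. The hardest step is the no-$C_3$ case, where identifying precisely which isomorphism of subdigraphs fails to extend requires a careful analysis of the automorphism group of the oriented tree.
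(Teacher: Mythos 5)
The overall skeleton (reduce to $\Delta(D)\isom T_{k,l}$ via Lemma~\ref{lem_GM4.3} and Theorem~\ref{thm_GMoThm4.6}, then split on whether $C_3$ embeds) matches the paper, and your ``local tree property'' ($N^-(y_1)\cap N^-(y_2)=\{x\}$ for distinct $y_1,y_2\in N^+(x)$, since a second common in-neighbour gives an alternating $4$-cycle in the tree $\Delta(D)$) is a correct and useful observation. But the no-$C_3$ case, which is the heart of the lemma, does not work as proposed. The final ``rigidity obstruction'' does not exist: in the $(k,l)$-regular oriented tree --- precisely the Cayley digraph of $F_2$ you invoke --- the isomorphism $D[x,y,w_1]\to D[x,y,w_2]$ \emph{does} extend to an automorphism (swap the subtrees hanging at $w_1$ and $w_2$); indeed every isomorphism of finite oriented subtrees of that tree extends, so it is C-homogeneous, triangle-free, has independent neighbourhoods, and has $\Delta\isom T_{k,l}$. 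So no contradiction can be extracted from ``$D$ is an oriented tree,'' and in any case the reduction ``rule out $4$-cycles, then inductively $\Gamma$ is a tree'' is not an argument (excluding $4$-cycles excludes nothing about longer cycles). The paper's proof of this case is entirely different: it studies the intersection $V\Delta_1\cap V\Delta_2$ of two reachability digraphs meeting at a vertex, uses paths that avoid large balls $B_{r+2}(x)$ (i.e., exploits that $D$ is \emph{not} tree-like) to force $|\Delta_1\cap\Delta_2|\geq 2$ and then minimal distance $2$ in both, and finally manufactures a $6$-cycle inside $\Delta_2\isom T_{k,l}$, which is the contradiction. None of that is present in your sketch, and your ``none'' sub-case of the $C_3$ analysis leans on the same non-existent obstruction.

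There is also a concrete error in the $C_3$ case. For the triangle $x\to y\to z_y\to x$ we have $x\in N^+(z_y)$, not $N^-(z_y)$; so if $z_{y_1}=z_{y_2}=z$ the set $N^-(z)$ contains only the non-adjacent pair $y_1,y_2$, and the configuration contains no alternating $4$-cycle either, so nothing you have established rules it out. Hence the injection $y\mapsto z_y$ and the resulting $l\geq k$ are unjustified; the equality $d^+=d^-$ should instead come from the triangle-counting argument of Lemma~\ref{lem_NoKnButC3Degree} ($|N^+(x)|n_1=n_3=|N^-(x)|n_2$ with $n_1=n_2$). Likewise the ``all or none'' dichotomy needs the Wielandt-type index argument of Lemma~\ref{lem_NoKnButC3NumberOfC3}, not just an appeal to C-homogeneity on $D[x,y_1,y_j,z_1]$ (C-homogeneity says nothing about non-isomorphic induced subdigraphs). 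Your ``all'' sub-case, combining the dichotomy with the local tree property at $z_1,z_2\in N^+(y_1)$, is fine once those two inputs are in place; the ``none'' sub-case still needs the paper's argument (an out-neighbour of $z$ adjacent to neither $x$ nor $y$ leads back to the first case's path construction, and otherwise one finds a vertex adjacent to all of $N^+(z)$ that would have to be both $x$ and $y$, yielding a cycle in a reachability tree).
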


\begin{proof}
Suppose that $\Delta(D)$ is not finite.
Since $D$ is locally finite, we conclude from Theorem~\ref{thm_GMoThm4.6} that $\Delta(D)\isom T_{k,l}$ for integers $k,l\geq 2$.
We distinguish two cases: Either $C_3$ embeds into $D$ or not.
So let us first suppose that $C_3$ does not embed into~$D$.
Let $\Delta_1,\Delta_2$ be two distinct reachability digraphs with non-empty intersection and let us denote with $d_i$ the distance in $\Delta_i$ between vertices of $\Delta_i$.
If $\Delta_1$ and $\Delta_2$ intersect in at most one vertex, let $x,y,z\in VD$ with $xz,yz\in ED$, $x,y,z\in V\Delta_1$, $z\in V\Delta_2$.
Then there is a ray $R$ in $\Delta_2$ starting in $z$ and such that no vertex on~$R$ except for~$z$ is adjacent to~$x$ or~$y$ because none of the out-neighbors of~$z$ is adjacent to~$x$ or~$y$.
Let $a\in V\Delta_1$ with $r:=d_1(a,x)\geq 2$ and $d_1(a,x)<d_1(a,y),d_1(a,z)$.
Then there is a path $P$ outside $B_{r+2}(x)$ from $a$ to~$R$.
Let $P'$ be the (induced) path in $P\cup R$ from $a$ to~$z$.
Then the subdigraphs $P'\cup\{x\}$ and $P'\cup\{y\}$ are isomorphic---we can map $x,z,z_1$ onto $x,z,z_2$ for any two successors of~$z$ and thus we may conclude that no vertex of~$\Delta_2$ except for~$z$ is adjacent to~$x$ or to~$y$.
But there is no automorphism of~$D$ mapping the first onto the second since $d_1(a,x)<d_1(a,y)$.
Thus we have $|\Delta_1\cap\Delta_2|\geq 2$ and hence there are infinitely many vertices in $\Delta_1\cap\Delta_2$ because of the C-homogeneity of~$D$.

If there are two vertices $u,v$ in $\Delta_1\cap \Delta_2$ with minimal distance $d_i(u,v)$ and with $d_i(u,v)\geq 3$ and $d_j(u,v)\geq 2$ ($i\neq j$), then we get a contradiction by two analog paths as before.

So we conclude that for all $u,v\in V\Delta_1\cap V\Delta_2$ with minimal distance in $\Delta_1$ there is $d_1(u,v)=2=d_2(u,v)$.
Now we shall construct a cycle in $\Delta_2$.
Let $x_1$ be the vertex in $\Delta_1$ that is adjacent to both $u$ and $v$ and let $x_2$ be another neighbor of $u$ in $\Delta_1$.
Let $y_1$ and $y_2$ be analog vertices in $\Delta_2$.
Then there is an automorphism of $D$ that fixes $u,y_1$ and $y_2$ and maps $x_1$ onto $x_2$ and vice versa.
Hence $x_1$ has another neighbor $v'$ in $\Delta_1$ that is also a neighbor in $\Delta_2$ of $y_2$.
But as $d_1(v,v')=2$, there is a neighbor $y_3$ of $v$ and $v'$ in $\Delta_2$.
Then the digraph induced by the vertices $u,v,v',y_1,y_2,y_3$ induces a cycle of length $6$ in~$\Delta_2$ which is impossible.

For the last case we suppose that $C_3$ embeds into $D$.
Let $\Delta_1,\Delta_2$ be as above and $xz,yz\in E\Delta_1$ with $z\in \Delta_1\cap\Delta_2$.
Let us suppose that $d^+\geq 3$ or $d^-\geq 3$.
Then we obtain a contradiction similar to the first one, if there is an out-neighbor of~$z$ that is adjacent neither to $x$ nor to~$y$.
So we may assume that there are at least two elements of $N^+(z)$ that are adjacent to~$x$.
As $D$ is C-homogeneous, each two elements of $N^+(z)$ have a common successor, and since $\Delta(D)\isom T_{k,l}$, there is a vertex adjacent to all elements of $N^+(z)$.
This vertex has to be~$x$ by the choice of~$x$.
But by C-homogeneity this also holds for~$y$, so there is a cycle in $\Delta_3$ which is impossible.
So in this case we have $d^+=d^-=2$.
\end{proof}

\begin{lem}\label{lem_DeltaBipThenDirectedCycle2}
Let $D$ be a locally finite connected C-homogeneous digraph with at most one end such that $N^+(x)$ and $N^-(x)$ are independent sets for all $x\in VD$.
If $\Delta(D)$ is bipartite and if the intersection of any two reachability digraphs does not separate each of them, then no reachability digraph separates $D$.
\end{lem}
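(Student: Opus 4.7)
The plan is to argue by contradiction, assuming that some reachability digraph $\Delta_0$ separates $D$ and deriving a violation of the at-most-one-end hypothesis. Let $\mathcal C$ be the set of components of $D-V\Delta_0$, so $|\mathcal C|\geq 2$. The first observation, drawn directly from the hypothesis, is that whenever $\Delta\ne\Delta_0$ is a reachability digraph with $V\Delta\cap V\Delta_0\ne\varnothing$ and $V\Delta\not\subseteq V\Delta_0$, the set $V\Delta\setminus V\Delta_0$ is connected in $\Delta$, and hence is contained in a single component of $D-V\Delta_0$.

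Because $\Delta(D)$ is bipartite every reachability digraph of $D$ is bipartite; and since the alternating-walk definition already forces the out-edges at any vertex to lie in a single reachability digraph and similarly for its in-edges, every vertex of $D$ lies in exactly two reachability digraphs---one where it is a source and one where it is a sink. For $v\in V\Delta_0$ let $\Delta^*(v)$ denote the one distinct from $\Delta_0$. Then every edge at $v$ outside $\Delta_0$ belongs to $\Delta^*(v)$, so every external neighbour of $v$ lies in $V\Delta^*(v)\setminus V\Delta_0$, which by the previous paragraph is contained in a single component $C(v)\in\mathcal C$.

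By $1$-arc transitivity, the setwise stabiliser $G_0=\{\phi\in\Aut(D):\phi(V\Delta_0)=V\Delta_0\}$ acts arc-transitively on $\Delta_0$, and hence transitively on each of $X$ and $Y$ separately, where $V\Delta_0=X\cup Y$ is the source/sink bipartition. Since each $\phi\in G_0$ sends $C(v)$ to $C(\phi(v))$, the sets $\mathcal O_X:=\{C(v):v\in X\}$ and $\mathcal O_Y:=\{C(v):v\in Y\}$ are each single $G_0$-orbits of components; by connectedness of $D$ they cover $\mathcal C$, so in particular components within the same orbit are $G_0$-isomorphic and thus of equal cardinality. If either orbit contains two or more components, I obtain two isomorphic components of $D-V\Delta_0$; in the sub-case $V\Delta_0$ finite and those components infinite, this directly yields two ends of $D$, contradicting the hypothesis. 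In the sub-case $V\Delta_0$ infinite, Lemma~\ref{lem_DeltaBipThenFinite} forces $\Delta(D)\isom T_{2,2}$, and one locates a finite arc of the tree $\Delta_0$ between vertices attached to two different orbit representatives that still separates those components in $D$, again producing two ends.

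The remaining case is $|\mathcal O_X|=|\mathcal O_Y|=1$ with $\mathcal O_X\ne\mathcal O_Y$, so $\mathcal C=\{C^X,C^Y\}$ with all $X$-vertices attached to $C^X$ and all $Y$-vertices to $C^Y$. Here I would apply $1$-arc transitivity to an edge with one endpoint inside a component: this produces another separating reachability digraph $\Delta^*(v)$ sharing the prescribed vertex with $\Delta_0$, and iterating gives nested separators inside $C^X$ and $C^Y$. The main obstacle will be this final iterative step: the two components play structurally different roles, no $G_0$-swap between them is directly available, and one must exploit the ``intersections do not separate'' hypothesis carefully to ensure that successive separators do not collapse but actually carve out sufficiently many infinite components to contradict at-most-one-endedness; the $T_{2,2}$ sub-case of the preceding paragraph demands similarly delicate handling, since there the natural separator of $D$ is infinite and must be shrunk to a finite subset without losing its separating property.
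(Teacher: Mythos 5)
Your argument is not complete, and you say so yourself: both the $T_{2,2}$ sub-case (``one locates a finite arc \dots that still separates'') and the final two-component case are left as acknowledged obstacles rather than proved. These are not peripheral: the entire force of the lemma lies in turning a separating reachability digraph into two ends, and the orbit bookkeeping you set up does not by itself do that. There is also a quieter gap in the one case you do claim to close: from two isomorphic components of $D-V\Delta_0$ you only get two ends when those components are \emph{infinite}, and nothing in your argument rules out their being finite (compare $Y_k$, where a reachability digraph does separate the digraph into finite pieces; it is only excluded here because it violates the non-separating-intersection hypothesis, a fact one must actually use).

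The missing idea, which is how the paper proceeds, is a \emph{nesting} argument rather than an orbit argument. Take a reachability digraph $\Delta_2$ meeting the separating $\Delta_1$, and let $C_1$, $C_2$ be suitable components of $D-\Delta_1$, $D-\Delta_2$. If $C_2$ avoids $\Delta_1$ then $C_2\subsetneq C_1$; since $C_1\isom C_2$ (transitivity on reachability digraphs plus C-homogeneity), a component properly containing an isomorphic copy of itself is infinite, and then $C_1\cap C_2$ and $(D-C_1)\cap(D-C_2)$ each carry an end --- two ends, contradiction. If instead $C_2$ meets $\Delta_1$, the hypothesis that $V\Delta_1\cap V\Delta_2$ does not separate $\Delta_1$ guarantees that $\Delta_1\sm V\Delta_2$ lies in a single component of $D-\Delta_2$, so some \emph{other} component $C_2'$ of $D-\Delta_2$ is contained in $C_1$ and avoids $\Delta_1$ entirely; one then repeats inside $C_2'$ with a reachability digraph $\Delta_3\subseteq C_2'$, producing a strictly decreasing chain of components, and local finiteness forces two of them to be comparable and isomorphic, returning to the first case. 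This single mechanism handles uniformly the finite-component worry, your two-component case, and the $T_{2,2}$ case, none of which your proposal resolves. Your observation that each vertex lies in exactly two reachability digraphs and that external neighbours of a vertex of $\Delta_0$ fall into one component is correct and could serve as a starting point, but without the ``proper self-embedding forces infiniteness'' step the proof does not go through.
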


\begin{proof}
Suppose that there is a reachability digraph $\Delta_1$ that separates $D$.
Let $\Delta_2$ be a reachability digraph with $V\Delta_1\cap V\Delta_2\neq\es$, let $x\in V\Delta_1\cap V\Delta_2$, $y$ be a neighbor of~$x$ in~$\Delta_2$, and let $C_i$, $i=1,2$, be the component of $D-\Delta_i$ that contains $y$ or is adjacent to~$y$ by an edge that lies not in $E\Delta_i$.
If $C_2$ does not contain any vertex of $\Delta_1$, then $C_2\subset C_1$ with $C_2\neq C_1$.
So both $C_1$ and $C_2$ has to be infinite since they are isomorphic.
Thus $D$ has one end in $C_1\cap C_2$ and symmetrically also another one in $(D-C_1)\cap(D-C_2)$ contrary to the assumptions.
So $C_2$ contains a vertex of~$\Delta_1$ and $C_2\not\subset C_1$.
But then, as $\Delta_1\sm V\Delta_2$ is connected, there is another component $C_2'$ of $D-\Delta_2$ that is completely contained in $C_1$ and contains no vertex of~$\Delta_1$.
The component $C_2'$ does not have to be isomorphic to~$C_1$, but since there is a reachability digraph $\Delta_3$ in $C_2'$, we obtain a component $C_3$ of $D-\Delta_3$ with $C_3\subset C_2'$ and so on.
Because the degree of any vertex is finite, there are $m,n$ such that $C_m$ and $C_n$---or $C_2'$ if $m$ or~$n$ is~$2$---lead to an analog contradiction as before.
\end{proof}

The following lemma is the main lemma for the case that there is no isomorphic copy of~$C_3$ in the C-homogeneous digraph.
After its proof, we show in Lemma~\ref{lem_NoC3ThenBipReach} that in the case that the out-neighborhood and the in-neighborhood each are independent sets the connected locally finite C-homogeneous digraphs that contains directed triangles always satisfy the assumptions of Lemma~\ref{lem_DeltaBipThenDirectedCycle}, so that in this case the conclusion of Lemma~\ref{lem_DeltaBipThenDirectedCycle} holds.

\begin{lem}\label{lem_DeltaBipThenDirectedCycle}
Let $D$ be a locally finite connected C-homogeneous digraph that contains no directed triangle and such that $N^+(x)$ and $N^-(x)$ are independent sets for all $x\in VD$.
If $\Delta(D)$ is bipartite, then either $D$ has at least two ends or~$D$ is isomorphic to $C_m[\bar{K}_n$ for an $m\geq 4$, $n\geq 1$.
\end{lem}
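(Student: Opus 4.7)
The first step is to reduce $\Delta:=\Delta(D)$ to a very restricted finite digraph. Since $D$ has no directed triangle, Lemma~\ref{lem_DeltaBipThenFinite} forces $\Delta$ to be finite, so by Theorem~\ref{thm_GMoThm4.6} its underlying graph is one of $C_{2k}$, $K_{r,s}$, or $CP_k$. Two basic facts will be used throughout: any two out-edges $v\to w_1,v\to w_2$ at a vertex form the alternating walk $w_1vw_2$ and hence lie in a single reachability digraph, so $v$ is a source of exactly one reachability digraph $\Delta^+(v)$ and, symmetrically, a sink of exactly one $\Delta^-(v)$; and vertex-transitivity of $D$ forces $d^+=d^-$, which in particular reduces the case $K_{r,s}$ to $K_{n,n}$.

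Assume from now on that $D$ has at most one end; the goal is $D\isom C_m[\bar{K}_n]$ for some $m\geq 4$, $n\geq 1$. The plan splits into two parts. First, I would eliminate $\Delta=C_{2k}$ with $k\geq 3$ and $\Delta=CP_k$ with $k\geq 3$ by applying Lemma~\ref{lem_DeltaBipThenDirectedCycle2}. Using C-homogeneity together with the antisymmetry constraint that two distinct reachability digraphs cannot share both a source-of-$\Delta_1$-sink-of-$\Delta_2$ vertex and a sink-of-$\Delta_1$-source-of-$\Delta_2$ vertex (which would produce a $2$-cycle), one verifies that in these two rigid cases the intersection of two reachability digraphs does not separate either of them; the lemma then gives that no reachability digraph separates $D$, and a subsequent analysis of how the finitely many reachability digraphs fit together produces at least two ends of $D$, contradicting the hypothesis.

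In the remaining case $\Delta=K_{n,n}$, define $v\sim v'$ by $N^+(v)=N^+(v')$. Then $\sim$ is an equivalence relation whose class at $v$ is precisely the source-class of $\Delta^+(v)$, so all classes have size $n$ and are independent sets (if $v\to v'$ then $v'\in N^+(v)=N^+(v')$, contradicting irreflexivity). The heart of the proof, and the main obstacle, is to show that $v\sim v'$ also entails $N^-(v)=N^-(v')$. My approach is to apply C-homogeneity to the connected out-star $\{v\}\cup N^+(v)$: the map $v\mapsto v'$ combined with the identity on $N^+(v)=N^+(v')$ is an isomorphism of connected induced subdigraphs and therefore extends to an automorphism $\alpha$ of $D$, yielding a bijection $N^-(v)\to N^-(v')$. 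Upgrading this to set equality requires analysing each $u\in N^-(v)$ relative to $v'$, using the absence of $C_3$ (which, given $u\to v\to w$ for $w\in N^+(v)$, forbids $w\to u$) together with the $K_{n,n}$-structure of the reachability digraph containing $u\to v$.

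Once $\sim$ is shown to agree with the analogously defined in-sibling relation, the quotient $D/\sim$ has in- and out-degree one at every vertex, so it is a disjoint union of directed cycles; connectedness of $D$ makes it a single directed cycle $C_m$, and the canonical map identifies $D$ with $C_m[\bar{K}_n]$. The no-$C_3$ hypothesis rules out $m=3$, and antisymmetry rules out $m\leq 2$, so $m\geq 4$. Finally, an infinite $C_m[\bar{K}_n]$ is a bi-infinite directed line thickened by independent $n$-sets and has two ends, so the one-end hypothesis forces $m$ to be finite, completing the proof.
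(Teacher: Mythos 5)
Your overall architecture is the same as the paper's: define an equivalence relation whose classes are the sides of the reachability digraphs, show the quotient is a directed cycle, and identify the reachability digraph as $K_{n,n}$ so that $D\isom C_m[\bar K_n]$. But the two genuinely hard steps are not actually carried out. The crux is your claim that $N^+(v)=N^+(v')$ forces $N^-(v)=N^-(v')$ (in the paper's language, that lying on a common side of one reachability digraph implies lying on a common side of two). You correctly flag this as the main obstacle, but your proposed mechanism does not close it: extending the isomorphism of the out-star $\{v\}\cup N^+(v)\to\{v'\}\cup N^+(v')$ to an automorphism $\alpha$ only yields $N^-(v)^\alpha=N^-(v')$ as a bijection between two possibly disjoint sets, and the remaining ``analysis of each $u\in N^-(v)$ relative to $v'$'' is exactly the statement to be proved, not a proof of it. In the paper this is Claim~\ref{clm_DeltaBipThenDirectedCycle3}, whose proof occupies most of the argument: it first establishes (via Lemma~\ref{lem_DeltaBipThenDirectedCycle2} and Claim~\ref{clm_DeltaBipThenDirectedCycle4}) that no reachability digraph separates $D$, then uses this to build an induced path $R$ returning to the sink side and derives a contradiction from an automorphism fixing $R$ and exploiting the absence of directed triangles. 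None of that is present in your sketch.

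The second gap is your proposed elimination of $\Delta(D)\isom C_{2k}$ and $CP_k$. You want to verify the hypothesis of Lemma~\ref{lem_DeltaBipThenDirectedCycle2}, conclude that no reachability digraph separates $D$, and then assert that ``a subsequent analysis\ldots produces at least two ends.'' That last step is a non sequitur: the conclusion of Lemma~\ref{lem_DeltaBipThenDirectedCycle2} is perfectly compatible with $D$ having one end (indeed the paper proves and then uses that conclusion precisely in the one-ended case), so no contradiction with the end hypothesis follows from it. The paper instead rules out $C_{2m}$ and $CP_k$ \emph{after} establishing the cyclic quotient structure $V_1,\dots,V_n$, by exhibiting in each case two isomorphic connected induced subdigraphs that no automorphism can map onto one another (a distance argument inside $C_{2m}$, and a non-adjacency-pattern argument inside $CP_k$). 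One smaller point: your blanket assertion that vertex-transitivity forces $d^+=d^-$ is only automatic for finite $D$; for infinite one-ended $D$ it needs justification, and the paper's Lemma~\ref{lem_NoKnButC3Degree} proves it only in the presence of directed triangles.
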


\begin{proof}
Let $\Delta(D)$ be bipartite.
We suppose that $D$ contains at most one end and, by Lemma~\ref{lem_IndependentDegree1}, that $d^+,d^-\geq 2$.

\begin{clm}\label{clm_DeltaBipThenDirectedCycle4}
Let $\Delta_1,\Delta_2$ be two reachability digraphs with non-trivial intersection.
Then either their intersection is contained in the same side of the bipartition of~$\Delta_1$ or $\Delta(D)\isom CP_k$ for a $k\geq 3$ and the intersection consists of precisely one unmatched pair in~$CP_k$.
\end{clm}

\begin{proof}[Proof of Claim~\ref{clm_DeltaBipThenDirectedCycle4}]
Suppose that the claim does not hold.
We remember that the reachability digraphs are induced subdigraphs.
So $D$ consists of at least $3$ reachability digraphs.
We also conclude that $\Delta(D)$ cannot be a complete bipartite digraph, so it is either the complement of a perfect matching or a directed cycle.
Let $x,y\in\Delta_1\cap\Delta_2$ be on distinct sides of $\Delta_1$ (and hence also of $\Delta_2$) with minimal distance in $\Delta_1$.
If $\Delta_1\isom C_{2m}$ for an $m\geq 4$, then we choose a minimal path $P$ in $\Delta_2$ from $x$ to~$y$.
Let $x'$ be a neighbor of~$x$ in~$\Delta_1$ and $y_1,y_2$ two neighbors of~$y$ in~$\Delta_1$.
Then by mapping $Pyy_1$ onto $Pyy_2$ we obtain $d_{\Delta_1}(x,y)=m$ and hence the subdigraphs induced by $x'xPyy_1$ and $x'xPyy_2$ are isomorphic paths.
We conclude that also $d_{\Delta_1}(x',y)=m$, a contradiction.
So $\Delta(D)$ is isomorphic to $CP_k$ for a $k\geq 3$.
Then $\Delta_1\cap\Delta_2$ consists of precisely two vertices that are not matched as claimed.
\end{proof}

Since each vertex lies in at most two reachability digraphs, we consider the following two relations:
Let $x\sim y$ for $x,y\in VD$ if $x$ and $y$ {\em lie on the same side} of a reachability digraph, that is, both have the same out-degree and the same in-degree in that reachability digraph and one of these two values is~$0$.
Let $x\approx y$ for $x,y\in VD$ if $x$ and $y$ lie on the same side of two reachability digraphs.

\begin{clm}\label{clm_DeltaBipThenDirectedCycle3}
Let $x,y\in VD$.
Then $x\sim y$ if and only if $x\approx y$.
\end{clm}

\begin{proof}[Proof of Claim~\ref{clm_DeltaBipThenDirectedCycle3}]
Let $x,y\in VD$.
It suffices to prove that $x\sim y$ implies $x\approx y$.
So let us suppose that $x\sim y$ but $x\not\approx y$ and let $\Delta$ be the reachability digraph that contains both vertices, $x$ and $y$, in on the same side.
Since $\Delta(D)$ is finite by Lemma~\ref{lem_DeltaBipThenFinite}, we know from Theorem~\ref{thm_GMoThm4.6} that both sides of the reachability digraph have the same size.
Hence there is a vertex with two successors in distinct reachability digraphs and one with two predecessors in two distinct reachability digraphs.
We conclude by the C-homogeneity that for every vertex each two successors lie in precisely one common reachability digraph and the same holds for each two predecessors.
So we may assume that $x$ and $y$ have distance $2$.
Let $v_1$ be a vertex in the same reachability digraph as $x$ and $y$ that is adjacent to both $x$ and $y$.
By symmetry we may assume that $xv_1,yv_1\in ED$.

The next aim is to show that no reachability digraph separates $D$.
Let us suppose that the converse holds.
By Lemma~\ref{lem_DeltaBipThenDirectedCycle2} each two reachability digraphs that have at least one common vertex, have at least two common vertices.
We conclude from Lemma~\ref{lem_DeltaBipThenDirectedCycle2} and Claim~\ref{clm_DeltaBipThenDirectedCycle4} that the intersection of each two reachability digraphs is contained in one side of the bipartition of each one.
But then Theorem~\ref{thm_GMoThm4.6} implies that $\Delta(D)$ is a cycle of length $2m$ with $m\geq 4$.
So let $a,b$ be two vertices in the same two reachability digraphs $\Gamma_1$ and $\Gamma_2$ with minimal distance.
Then there is a minimal path $P$ between $a$ and $b$ in~$\Gamma_1$.
Let $w_1,w_2$ be the neighbors of $b$ in~$\Gamma_2$, let $u_1$ be the vertex on~$P$ that is adjacent to~$a$, and let $u_2$ be a vertex in~$\Gamma_2$ that is adjacent to~$a$.
Then the paths $u_1Pbw_1$ and $u_1Pbw_2$ are isomorphic and thus there is an automorphism of~$D$ that maps the first onto the second one.
This automorphism has to fix~$a$ and thus the distance in $\Gamma_2$ from $a$ to~$w_1$ is the same as the one from $a$ to~$w_2$.
But since $m\geq 4$, we can also map the path $u_2aPbw_1$ onto $u_2aPbw_2$.
Then also $u_2$ and $w_1$ have the same distance in $\Gamma_2$ as $u_2$ and $w_2$.
But this cannot be true.
Thus we know that no reachability digraph can separate $D$.

Hence we find an (undirected) induced path $R$ from $v_1$ to~$y$ whose only vertices in~$\Delta$ are $v_1$ and $y$ and that does not use the edge $yv_1$.
We may choose $R$ so that the only vertex on~$R$ that is adjacent to~$x$ is $v_1$ by applying the C-homogeneity to an automorphism that maps $D[x,v_1,y]$ onto $D[y,v_1,x]$.
Let $v_3,v_2,y$ be the last three vertices on $R$.
If $v_3\sim y$, then we conclude $v_3\not\sim x$.
But then $yv_1Rv_3$ can be mapped onto $xv_1Rv_3$ by an automorphism of~$D$ and we obtain by $x\sim v_3$ a contradiction.

So $v_3\not\sim y$.
By an analog automorphism to the one above---one that maps $yv_1Rv_3$ onto $xv_1Rv_3$---we obtain that $v_3$ and $x$ have a common neighbor $v_4$.
Let $w_1$ be the neighbor of~$v_1$ on~$R$.
Since $D$ contains no directed triangle, there is an automorphism $\alpha$ of~$D$ that fixes $w_1Rv_2$ and also $v_4x$ pointwise, but with $y^\alpha\neq y$.
But then $y^\alpha$ has to lie in the reachability digraph $\Delta$ as $y$ and $x$ which is impossible as we already saw.
\end{proof}

We conclude from Claim~\ref{clm_DeltaBipThenDirectedCycle3} that $\sim$ and $\approx$ are equivalence relations on $VD$.
Let $\Gamma$ be a digraph on the equivalence classes of~$\sim$ such that there is an edge from one class $X_1$ to another $X_2$ if and only if there are vertices $x_1\in X_1$ and $x_2\in X_2$ with $x_1x_2\in ED$.
By Claim~\ref{clm_DeltaBipThenDirectedCycle3} each vertex of~$\Gamma$ has precisely one successor and one predecessor.
It is a straightforward argument that $\Gamma$ is a C-homogeneous digraph.
Since $D$ has at most one end, $\Gamma$ must be a directed cycle $C_n$ for an $n\geq 3$ by Lemma~\ref{lem_IndependentDegree1}.

\medskip

It remains to show that the inverse images of any edge of~$\Gamma$, that is the subdigraph of~$D$ induced by the equivalence classes that are incident with that particular edge of~$\Gamma$, and that is precisely one reachability digraph, induces a complete bipartite digraph.
Let $V_1,\ldots, V_n$ denote the equivalence classes such that $V_iV_{i+1}\in E\Gamma$ for $i<n$ and $V_nV_1\in E\Gamma$ with $n\geq 4$.

It follows from Lemma~\ref{lem_GM4.3} and Theorem~\ref{thm_GMoThm4.6} that $\Delta(D)$ is either a semi-regular tree $T_{k,l}$, a cycle $C_{2m}$, the complement of a perfect matching $CP_k$, or a complete bipartite digraph $K_{k,l}$.
To prove the lemma, we have to show that none of the first three cases can occur where the first one was already excluded by Lemma~\ref{lem_DeltaBipThenFinite}.

Let us suppose that $\Delta(D)\isom C_{2m}$ for an $m\geq 4$.
Let $x\in V_1$ and let $a,b$ be its successors.
Let $P$ be a shortest $a$-$b$-path in $\Delta_2$, the subdigraph induced by $V_2$ and $V_3$, and let $P^\circ:=P-b$.
Let $P'$ be a path of the same length as $P^\circ$ in $\Delta_2$ that starts in~$a$ and is except for $a$ distinct from $P$.
By mapping $xP^\circ$ onto $xP'$, we obtain $d_{\Delta_2}(a,b)=m$.
But then the same holds for the other predecessor $y\neq x$ of~$a$ and thus $y$ also has to be adjacent to~$b$ and hence $m=2$, a contradiction.

Let us now suppose that $\Delta(D)\isom CP_k$ for a $k\geq 3$.
Let $x\in V_1$.
Then there exists a unique vertex in $V_2$ that is not adjacent to~$x$ and this vertex itself has a unique vertex $y\in V_3$ it is not adjacent to.
Now let $X$ be the digraph $D[(V_3\sm\{y\})\cup P]$ where $P$ denotes a path that consists of one vertex from each $V_i$, $i\geq 4$ and of~$x$ such that the vertex in $V_4$ is the only vertex incident with all of $V_3$ but $y$.
Let $x'$ be another vertex of $V_1$ that is adjacent to the predecessor of~$x$ on~$P$ and let $Y$ be the digraph $D[(VX\sm\{x\})\cup\{x'\}]$.
Then $X$ and $Y$ are isomorphic subdigraphs of~$D$ but there is no automorphism of~$D$ mapping the first onto the second one since for $x$ and $y$ there is a unique vertex in $V_2$ that is not adjacent to both, but for $x'$ and $y$ there is no such vertex.
Hence $\Delta(D)\not\isom CP_k$ and thus we conclude from Theorem~\ref{thm_GMoThm4.6}, since we excluded all other cases, that $\Delta(D)$ is a complete bipartite digraph.
As all equivalence classes have the same size, $\Delta(D)\isom K_{k,k}$ for a $k\geq 1$.
\end{proof}

The following proposition is similar to a result by Malni\v{c} et.al., see \cite[Proposition~3.2]{MMMSTZ}.
Since we apply it in a situation where its original assumptions need not to be satisfied, we formulated the result with different assumptions.
But the general idea of the proof of Proposition~\ref{prop_MMMSTZ3.2MoreGeneral} is quite similar to the one of the proof of \cite[Proposition 3.2]{MMMSTZ}.
Because our assumptions are to handle differently, we prove it here.

\begin{prop}\label{prop_MMMSTZ3.2MoreGeneral}
Let $D$ be a connected C-homogeneous digraph such that in-degree and out-degree of any vertex are at most a fixed integer $d$ and such that both $N^+(x)$ and $N^-(x)$ are independent sets.
Let $\Gamma=\Aut(D)$, $xy\in ED$ and $\Omega\subseteq N^+(x)$ with $|\Omega|=d$ such that $H=\Gamma_{xy}$ fixes $\Omega$ setwise but stabilizes no vertex of~$\Omega$.

Then there is no alternating walk whose first edge is $xy$ and which ends at a vertex of~$\Omega$.
\end{prop}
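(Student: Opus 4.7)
I plan to argue by contradiction. Suppose such an alternating walk exists, say $W=v_0v_1\cdots v_k$ with $v_0=x$, $v_1=y$ and $v_k\in\Omega$, and choose one of minimum length~$k$.

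First I handle the base case. If $k=1$ then $v_1=y\in\Omega$; since $H=\Gamma_{xy}$ fixes $y$, this contradicts the hypothesis that $H$ stabilises no vertex of~$\Omega$. So $k\geq 2$.

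Now I use the action of $H$. Because $H$ fixes the edge $xy$ pointwise and $\Omega$ setwise, each $h\in H$ maps $W$ to another alternating walk $h(W)=xy\,h(v_2)\cdots h(v_k)$ of length~$k$, still starting with $xy$ and ending at $h(v_k)\in\Omega$. Since no element of $\Omega$ is $H$-fixed, the orbit $Hv_k\subseteq\Omega$ has cardinality at least~$2$, so I can pick $h\in H$ with $v_k':=h(v_k)\neq v_k$, and write $W':=h(W)$. Let $i\geq 2$ be the smallest index at which $W$ and $W'$ disagree. The alternating condition at $v_{i-1}$ forces both $v_i$ and $v_i':=h(v_i)$ into a common set $N^+(v_{i-1})$ or $N^-(v_{i-1})$, and by the independence hypothesis these two vertices are non-adjacent. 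Consequently the induced directed paths $D[v_0,\ldots,v_i]$ and $D[v_0,\ldots,v_{i-1},v_i']$ are isomorphic, and C-homogeneity of~$D$ furnishes an automorphism $\phi\in\Aut(D)$ fixing $v_0,\ldots,v_{i-1}$ pointwise with $\phi(v_i)=v_i'$; in particular $\phi$ fixes the edge $xy$, so $\phi\in H$.

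The final step is to leverage $\phi$ to extract a contradiction. The plan is to replace $h$ by $\phi^{-1}h\in H$: a short calculation shows the resulting walk agrees with $W$ on one additional vertex. Iterating this correction process at most $k-1$ times produces an element $h^*\in H$ whose walk coincides with $W$ up to and including $v_k$, so that the original $h$ can be written as a product of elements each fixing an ever-longer initial segment of~$W$; tracing back through this product yields either a shorter alternating walk from $xy$ into $\Omega$ (contradicting minimality of $k$) or an element of $H$ fixing $v_k$ (contradicting $v_k\notin$ any fixed point of~$H$). The main obstacle I anticipate is precisely this bookkeeping: at each stage one has to verify that the first divergence index strictly advances and that the alternation pattern is preserved at the splice point. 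Both facts follow by re-applying the C-homogeneity and independence argument of the previous paragraph, but care is required to ensure the iteration terminates and delivers the contradiction.
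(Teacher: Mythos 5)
There is a genuine gap, and it sits exactly where you flag the "main obstacle": the iteration does not terminate in a contradiction. After your correction process you obtain a factorization $h=\phi_1\cdots\phi_r g$ with $g\in H$ fixing all of $v_0,\dots,v_k$; but this yields neither of your two claimed contradictions. No shorter alternating walk from $xy$ into $\Omega$ appears anywhere in the construction. And "an element of $H$ fixing $v_k$" is not contrary to the hypothesis: the assumption is that no vertex of $\Omega$ is fixed by \emph{all} of $H$ (i.e.\ $H$ has no fixed point in $\Omega$), not that no individual element of $H$ fixes a vertex of $\Omega$ --- the identity already fixes $v_k$. A secondary problem is that the correction step itself is vacuous: the automorphism $\phi$ you obtain from C-homogeneity, fixing $v_0,\dots,v_{i-1}$ and sending $v_i$ to $v_i'$, is only required to extend the restriction of $h$ to $\{v_0,\dots,v_i\}$, so nothing prevents $\phi=h$, in which case $\phi^{-1}h=\mathrm{id}$ and no progress is made. (Also, the induced subdigraphs on $\{v_0,\dots,v_i\}$ are not "directed paths" --- the walk is alternating and may have chords --- though the isomorphism is at least correct since it is realized by $h$ itself.)

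The telltale sign is that your argument never uses the hypotheses $|\Omega|=d$ and that all in- and out-degrees are at most $d$; these are essential. The paper's proof runs an induction along the alternating walk, maintaining the invariant that the pointwise stabilizer $H_P$ of the walk $P$ (started at $xy$) still induces the full symmetric group $S_\Omega$ on $\Omega$. The induction step observes that the next vertex $z$ of the walk lies in $N^{\pm}(w)\setminus\{\text{previous vertex}\}$ for the current endpoint $w$, so its $H_P$-orbit has size at most $d-1$; hence $|H_P^\Omega : (H_{Pz})^\Omega|<d=|\Omega|$, and Proposition~\ref{prop_Wielandt} forces $(H_{Pz})^\Omega$ to be $S_\Omega$ or $A_\Omega$, the alternating case being excluded by C-homogeneity applied to transpositions of the independent set $\Omega$. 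Since $H_P$ fixes the endpoint of $P$ but has no fixed point in $\Omega$, that endpoint cannot lie in $\Omega$. If you want to keep your minimal-counterexample framing, you would still need to prove precisely this invariant (that the stabilizer of the whole walk moves $v_k$ within $\Omega$), and that is where the degree bound and the index argument in $S_\Omega$ are unavoidable.
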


\begin{proof}
Since $D$ is C-homogeneous, the group $H$ acts on $\Omega$ like $S_\Omega$, i.e.\ $H^\Omega\isom S_\Omega$.
Let $P$ be an alternating walk with initial edge $xy$.
Suppose that $H^\Omega_P=H^\Omega$.
Let $e\in ED$ such that $Pe$ determines an alternating walk, and let $z$ be the vertex incident with $e$ but distinct from the end vertex of~$P$.
Then there are at most $d-1$ vertices in $\{z^\alpha\mid\alpha\in H_P\}$.
Since $H^\Omega=H^\Omega_P$, either we have $d=2$ or we have $|H^\Omega : H^\Omega_z|<d$.
Let us first assume that $d\neq 2$.
Then, by Proposition~\ref{prop_Wielandt}, $H^\Omega_z$ is either $H^\Omega$ or isomorphic to $A_\Omega$.
We shall now show that the latter case cannot occur.
So suppose that $H^\Omega_z\isom A_\Omega$.
Then $H_z$ acts transitively on~$\Omega$ but there is no automorphism fixing $|\Omega|-2$ elements and switching the other two.
Since $D$ is C-homogeneous and $\Omega$ is independent, this is impossible.
Hence $H^\Omega_z=H^\Omega$ and thus no vertex of~$\Omega$ is fixed by $H_z$.
So let us now assume that $d=2$.
But in this case we immediately deduce from the fact that the orbit of~$z$ under $H$ contains only~$z$, that $H=H_z$.
So we conclude in each case that no vertex of $\Omega$ can lie on an alternating walk.
\end{proof}

\begin{lem}\label{lem_NoC3ThenBipReach}
Let $D$ be a connected locally finite C-homogeneous digraph such that $N^+(x)$ and $N^-(x)$ are independent sets for all $x\in VD$ and assume that $D$ contains no directed triangle.
Then the reachability relation of $D$ is not universal.
\end{lem}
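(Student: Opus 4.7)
The plan is to argue by contradiction. Assume $\mathcal{A}$ is universal. By Proposition~\ref{prop_CPW}, $\Delta(D)=D$ and $D$ is $1$-arc transitive; by Lemma~\ref{lem_IndependentDegree1} we may moreover assume $d^+,d^-\geq 2$, since $d^+=1$ or $d^-=1$ would force $D$ to be a directed cycle or a tree, on neither of which the reachability relation is universal.

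Fix an edge $xy\in ED$ and put $H:=\Gamma_{xy}$. The key structural fact is that $\Omega:=N^-(y)\setminus\{x\}$ (which is non-empty since $d^-\geq 2$) is an independent set on which $H$ acts as the full symmetric group without a fixed vertex. For each $z\in\Omega$ the no-triangle hypothesis, combined with the independence of $N^+(x)$ and of $N^+(z)$, forces $z$ to be non-adjacent to $x$: an arc $xz$ would put the adjacent pair $\{z,y\}$ inside the independent set $N^+(x)$, while an arc $zx$ would put the adjacent pair $\{x,y\}$ inside the independent set $N^+(z)$. Hence every $z\in\Omega$ induces the same subdigraph with $\{x,y\}$, namely only the two arcs $xy$ and $zy$; since $\Omega\subseteq N^-(y)$ is itself independent, C-homogeneity extends each permutation of $\Omega$ that fixes $x$ and $y$ to an element of $H$, so $H^\Omega\cong S_\Omega$.

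Applying Proposition~\ref{prop_MMMSTZ3.2MoreGeneral} to the reverse digraph of $D$ (which satisfies the same hypotheses, and in which $\Omega$ is contained in $N^+(y)$) yields that no alternating walk whose first edge is $xy$ has its final vertex in $\Omega$. On the other hand, the universality of $\mathcal{A}$ gives, for any $z\in\Omega$, an alternating walk $W=x_0x_1\ldots x_n$ whose first edge is $xy$ and whose last edge is $zy$. The endpoint $x_n$ is either $z$---contradicting the proposition immediately---or $y$; in the latter case $x_{n-1}=z$, and the truncation $x_0x_1\ldots x_{n-1}$ is still an alternating walk with first edge $xy$ (the alternation condition at the former internal vertex $x_{n-1}$ is simply dropped) that ends at $z\in\Omega$, again contradicting the proposition.

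The main obstacle I anticipate is reconciling the size condition $|\Omega|=d$ of Proposition~\ref{prop_MMMSTZ3.2MoreGeneral} (where $d$ bounds both in- and out-degrees) with our $|\Omega|=d^--1$. One must re-examine the inductive step of its proof: the orbit of the newly added vertex of an extending edge has size at most $d-1$, and combining this orbit count with Proposition~\ref{prop_Wielandt} and the usual C-homogeneous exclusion of $A_\Omega$ suffices to propagate $H^\Omega_P=H^\Omega$ along alternating walks whenever $d^-\geq d^+$. The complementary case $d^+>d^-$ is handled symmetrically by the dual choice $\Omega':=N^+(x)\setminus\{y\}$, for which the analogous analysis goes through in the original digraph.
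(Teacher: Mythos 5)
Your overall strategy is the paper's: verify the hypotheses of Proposition~\ref{prop_MMMSTZ3.2MoreGeneral} for a set $\Omega$ attached to an edge $xy$ and derive the contradiction from universality (your truncation argument at the end is fine). But your choice $\Omega=N^-(y)\setminus\{x\}$ creates a gap that your proposed workaround does not close. The hypothesis $|\Omega|=d$ is not a cosmetic normalization: in the inductive step of Proposition~\ref{prop_MMMSTZ3.2MoreGeneral} the orbit of the new vertex $z$ under $H_P$ has size at most $d-1$, and one needs $d-1<|\Omega|$ so that Proposition~\ref{prop_Wielandt} forces $H^\Omega_z$ to contain $A_\Omega$. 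With your $|\Omega|=d^--1$, even in the favourable case $d=d^-$ you only get $|H^\Omega:H^\Omega_z|\leq d-1=|\Omega|$, and a subgroup of index exactly $|\Omega|$ in $S_\Omega$ may well be a point stabilizer; then $H_{Pz}$ fixes a vertex of $\Omega$ and the propagation collapses. Worse, when $d^-=2$ you have $|\Omega|=1$, so the hypothesis that $H$ stabilizes no vertex of $\Omega$ fails before you even start.

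The fix is to take $\Omega=N^+(y)$, after assuming without loss of generality that $d^+\geq d^-$, so that $|\Omega|=d^+=d$ exactly. This is also where the no-triangle hypothesis genuinely enters (a warning sign in your write-up is that it never really does: for $z\in N^-(y)\setminus\{x\}$ both adjacency cases are already killed by independence alone, yet the lemma is false without the triangle hypothesis, which is why the paper treats that case separately in Lemma~\ref{lem_C3ThenBipReach}). For $z\in N^+(y)$, an edge $xz$ would put the adjacent pair $y,z$ inside the independent set $N^+(x)$, while an edge $zx$ would close the directed triangle $x\to y\to z\to x$; hence every $z\in N^+(y)$ induces the same configuration with $\{x,y\}$, the set $N^+(y)$ is independent, and $\Gamma_{xy}$ acts on it as the full symmetric group with no fixed vertex. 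With this $\Omega$ the proposition applies verbatim and your concluding truncation argument goes through.
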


\begin{proof}
Let $xy\in ED$.
By symmetry we may assume that $d^+(x)\geq d^-(x)$.
Let $\Omega=N^+(y)$.
By applying Proposition~\ref{prop_MMMSTZ3.2MoreGeneral} we conclude that no vertex of~$\Omega$ lies on an alternating path that starts with the edge $xy$ and thus that the reachability relation of~$D$ cannot be universal.
\end{proof}

\begin{lem}\label{lem_NoKnButC3Degree}
Let $D$ be a connected locally finite C-homogeneous digraph such that $N^+(x)$ and $N^-(x)$ are independent sets for all $x\in VD$.
If $C_3$ embeds into $D$, then
we have $d^+(x)=d^-(x)$.
\end{lem}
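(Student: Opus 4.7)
My plan is to prove this by double-counting the directed triangles through a fixed vertex $x$, using C-homogeneity to pin down the number of such triangles containing any given edge.

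First, I would observe that, since $C_3$ embeds into $D$ and $D$ is C-homogeneous, \emph{every} edge of $D$ lies on a directed triangle: pick any edge $e_0$ of a fixed directed triangle in $D$ and any edge $e=xy$ of $D$; the obvious isomorphism $e_0 \to e$ between these two connected induced subgraphs extends to an automorphism of $D$, which carries the triangle containing $e_0$ to one containing $e$. Consequently, for $xy \in ED$, the set $N^+(y) \cap N^-(x)$ (the vertices $z$ completing $x \to y \to z \to x$ to a directed triangle) is nonempty. More strongly, by C-homogeneity applied to the connected subgraphs on $\{xy\}$ and $\{xy'\}$ for any two out-neighbors $y,y'$ of $x$, there is an automorphism fixing $x$ and sending $y$ to $y'$, so $|N^+(y) \cap N^-(x)|$ depends only on $D$, not on the choice of edge $xy$; call this common value $k$, and note $k \geq 1$. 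The analogous argument shows $|N^-(z) \cap N^+(x)|$ is a constant for $zx \in ED$, and applying C-homogeneity to the isomorphism $xy \mapsto zx$ (which maps $x\to z$, $y \to x$) identifies the two constants, so both equal $k$.

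Next I would count in two ways the set $T(x) := \{(y,z) \in VD \times VD : xy, yz, zx \in ED\}$ of directed triangles through $x$. Summing over out-neighbors of $x$ gives
\[
|T(x)| = \sum_{y \in N^+(x)} |N^+(y) \cap N^-(x)| = d^+(x) \cdot k,
\]
while summing over in-neighbors of $x$ gives
\[
|T(x)| = \sum_{z \in N^-(x)} |N^-(z) \cap N^+(x)| = d^-(x) \cdot k.
\]
Hence $d^+(x) \cdot k = d^-(x) \cdot k$, and since $k \geq 1$ we conclude $d^+(x) = d^-(x)$, as required.

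The only subtle point — and the one I would take care to justify cleanly — is the equality of the two "per-edge" constants above, since they are formally the sizes of different intersections. This is where C-homogeneity is essential: the map sending the edge $xy$ to the edge $zx$ is an isomorphism of connected induced subgraphs and so extends to an automorphism of $D$, which then matches up the two relevant neighborhood intersections. Everything else is a routine counting manipulation, so I do not anticipate a genuine obstacle here.
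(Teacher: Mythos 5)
Your proof is correct and follows essentially the same route as the paper: both arguments double-count the directed triangles through $x$, using $1$-arc transitivity (a consequence of C-homogeneity) to show that every edge lies on the same positive number $k$ of directed triangles, and then cancel $k$ from $d^+(x)\cdot k = d^-(x)\cdot k$. Your explicit verification that $k\geq 1$ and that the two per-edge constants coincide only makes precise what the paper leaves implicit.
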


\begin{proof}
Let $y\in N^+(x)$ and $z\in N^-(x)\cap N^+(y)$.
The number $n_1$ of directed triangles that contain $xy$ is equal to the number of all $2$-arcs from $y$ to~$x$.
By C-homogeneity this is the same as the number of all $2$-arcs from $x$ to~$z$ which is again equal to the number $n_2$ of all directed triangles that contain $zx$.
Let $n_3$ denote the number of all directed triangles that contain~$x$.
Then we conclude from the C-homogeneity
$$|N^+(x)| n_1=n_3=|N^-(x)| n_2.$$
Since $n_1=n_2$, the claim follows.
\end{proof}

\begin{lem}\label{lem_NoKnButC3NumberOfC3}
Let $D$ be a locally finite C-homogeneous digraph that contains a directed triangle.
Then for every edge $xy\in ED$ the number of directed cycles that contains $xy$ is either $1$ or at least $(d^+-1)$.
\end{lem}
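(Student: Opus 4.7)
The plan is to reformulate the count as a symmetric $2$-design on $N^+(y)$ whose automorphism group, by C-homogeneity, contains the full direct product $S_T \times S_U$, where $T := N^+(y) \cap N^-(x)$ and $U := N^+(y) \setminus T$. Such rigidity forces the design to be one of finitely many trivial ones, whence $n := |T| \in \{1, d^+ - 1, d^+\}$.

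First I set $T_w := N^+(y) \cap N^-(w)$ for each $w \in N^-(y)$. $1$-arc transitivity of the edge $wy$ gives $|T_w| = n$, and a dual count shows each $z \in N^+(y)$ lies in exactly $n$ of the $T_w$'s. Since $N^-(y)$ is independent (the standing assumption of Section~\ref{sec_IndCase}), $\Gamma_y$ is transitive on unordered pairs in $N^-(y)$, so by C-homogeneity the intersection number $c := |T_w \cap T_{w'}|$ is the same for all distinct $w, w' \in N^-(y)$. Double-counting the set $\{(w, z) : w \in N^-(y) \setminus \{x\},\ z \in T,\ z \to w\}$---noting that each $z \in T$ has $n$ successors in $N^-(y)$, of which one is $x$---yields $c(d^+-1) = n(n-1)$. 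So $(T_w)_{w \in N^-(y)}$ is a symmetric $2$-$(d^+, n, c)$ design.

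Next, by C-homogeneity applied to the connected subdigraph on $\{x, y\} \cup N^+(y)$, the image of $H := \Gamma_{xy}$ on $N^+(y)$ is precisely $S_T \times S_U$; since each $\alpha \in H$ sends $T_w$ to $T_{\alpha(w)}$, this group embeds into the automorphism group of the design. Now suppose $n \geq 2$ and $n \leq d^+ - 1$ (otherwise $n = d^+$ and we are done), so $c < n$. Since $S_T \times S_U$ is transitive on $(c, n-c)$-subsets of $N^+(y)$ and permutes blocks, every such subset is a block. Then for any two distinct blocks $B, B'$ with $B \cap T = B' \cap T$, the design condition $|B \cap B'| = c$ forces $B \cap U$ and $B' \cap U$ to be disjoint $(n-c)$-subsets of $U$; hence the $\binom{d^+-n}{n-c}$ blocks sharing a fixed $c$-subset $A \subseteq T$ pack pairwise disjoint $(n-c)$-sets into $U$, giving
\[
 \binom{d^+-n}{n-c}\,(n-c) \;\leq\; d^+ - n.
\]
Combined with $c(d^+-1) = n(n-1)$, this pins $n - c$ down to at most $1$, and the case $n - c = 1$ yields $n = d^+ - 1$.

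The hard part is the final combinatorial pinch: the inequality above rules out $n - c \geq 3$ on the nose, but the subcase $n - c = 2$ requires combining the inequality---which then forces $d^+ - n \leq 2$---with the divisibility $(d^+ - 1) \mid n(n-1)$ to verify there is no admissible integer solution with $n \geq 2$.
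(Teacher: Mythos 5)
Your route is genuinely different from the paper's. The paper fixes $z\in N^+(y)$ and studies the action of the stabilizer $H_z\le H=\Aut(D)_{xy}$ on $\Omega_3=N^+(x)\setminus\{y\}$: since the orbit of $z$ under $H$ has size $d_i$, the index $|H^{\Omega_3}:H_z^{\Omega_3}|$ is less than $|\Omega_3|=d^+-1$ unless $d_i\ge d^+-1$, so Proposition~\ref{prop_Wielandt} forces $H_z$ to induce $A_{\Omega_3}$ or $S_{\Omega_3}$, and transitivity of $H_z$ on $\Omega_3$ then gives an all-or-nothing dichotomy for the triangles. You instead package the data as a symmetric design on $N^+(y)$ with blocks $T_w=N^+(y)\cap N^-(w)$ for $w\in N^-(y)$, and exploit that $\Gamma_{xy}$ induces the full group $S_T\times S_U$ on $N^+(y)$. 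That part is sound and arguably more elementary (no appeal to Proposition~\ref{prop_Wielandt}): the parameters $|T_w|=n$, constant intersection $c$, and $c(d^+-1)=n(n-1)$ are all correctly derived (using $d^+=d^-$ from Lemma~\ref{lem_NoKnButC3Degree} and the independence of the neighborhoods), distinct $w$ give distinct blocks when $c<n$, and the orbit argument does show that every $(c,n-c)$-subset is a block.

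The endgame, however, has a genuine hole. Write $u=d^+-n$ and $s=n-c$; your packing inequality reads $\binom{u}{s}\,s\le u$. This rules out $2\le s\le u-1$ (there $\binom{u}{s}\ge u$, so the left side is at least $2u$), but it holds \emph{with equality} when $s=u$, for every value of $s$ --- so it does not rule out $n-c\ge 3$ ``on the nose'' as you claim. The escaping case $s=u$ (every block other than $T$ contains all of $U$) must be killed by substituting $c=2n-d^+$ into $c(d^+-1)=n(n-1)$, which gives $(d^+)^2-(2n+1)d^++n(n+1)=0$ and hence $d^+\in\{n,n+1\}$; so $s=u$ collapses to $s=u=1$ and is harmless. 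With that repair, and with your $s=2$ subcase done as indicated (the inequality forces $u=2$, and then $(n-2)(n+1)=n(n-1)$ is impossible), the case division is really $s=1$, $s=u$, and $2\le s\le u-1$, and the argument closes with $n\in\{1,d^+-1,d^+\}$ as required. So the proof is salvageable, but the final pinch as stated is incorrect.
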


\begin{proof}
Let $d_1$ be the number of elements of $N^+(y)$ that lie on a common directed triangle with $xy$ and let $d_2$ be the number of elements of $N^+(y)$ for which this is not the case.
Then $d=d_1+d_2$ where $d:=d^+$ which is the same as~$d^-$ by Lemma~\ref{lem_NoKnButC3Degree}.
Let $\Omega_1$ be the set of all vertices of $N^+(y)$ that lie on a common directed triangle with $xy$ and let $\Omega_2=N^+(y)\sm \Omega_1$.
Let $\Omega_3:=N^+(x)\sm\{y\}$.
We consider the action of $H:=\Aut(D)_{xy}$ on~$\Omega_3$.
Because $N^+(x)$ is an independent set, $H$ acts on $\Omega_3$ like $A_{\Omega_3}$.
For $z\in\Omega_i$, $i=1,2$, we have $|H:H_z|=d_i<d^+-1=|\Omega_3|$.
Thus and by Proposition~\ref{prop_Wielandt}, $H_z$ acts on $\Omega_3$ either like $S_{\Omega_3}$ or like $S_{\Omega_3}$.
Let us first consider the second case.
By a similar argument as in Proposition~\ref{prop_MMMSTZ3.2MoreGeneral}, we know that $|\Omega_3|=2$.
But then $d^+=3$ and the assertion trivially holds.
So we assume that $H_z$ acts on $\Omega_3$ like $S_{\Omega_3}$.
In that case either no vertex of $N^+(y)$ lies in $N^+(y')$ for any $y\neq y'\in N^+(x)$ or every vertex of $N^+(y)$ lies in $N^+(y')$ for every $y\neq y'\in N^+(x)$.
We conclude that each edge lies either on precisely one or on $d$ distinct directed triangles.
\end{proof}

The following lemma is the main lemma for the case that the C-homogeneous digraph contains a directed triangle.
The assumption that $\Delta(D)$ is bipartite in this case shall be verified in Lemma~\ref{lem_C3ThenBipReach} and the case (iv) of the conclusions shall be investigated in Section~\ref{sec_Imprimitive}.

\begin{lem}\label{lem_DeltaBip+C3ThenDirectedCycle}
Let $D$ be a locally finite connected C-homogeneous digraph that contains an isomorphic copy of~$C_3$ and such that $N^+(x)$ and $N^-(x)$ are independent sets for all $x\in VD$.
If $\Delta(D)$ is bipartite, then one of the following cases holds.
\begin{enumerate}[{\em (i)}]
\item The digraph $D$ has at least two ends.
\item The reachability digraph $\Delta(D)$ is isomorphic to $K_{k,k}$ for a $k\geq 3$ and $D$ is isomorphic to $C_3[\bar{K}_k]$.
\item The reachability digraph $\Delta(D)$ is isomorphic to $CP_k$ for a $k\in\nat$ and $D$ is isomorphic to a $Y_k$ for a $k\geq 3$.
\item The reachability digraph $\Delta(D)$ is isomorphic to $C_{2m}$ for an $m\geq 4$ or to~$T_{2,2}$.
\end{enumerate}
\end{lem}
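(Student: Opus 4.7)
My plan is to first assume $D$ has at most one end (otherwise case~(i) holds). Applying Lemma~\ref{lem_DeltaBipThenFinite}, either $\Delta(D)\isom T_{2,2}$ (case~(iv)) or $\Delta(D)$ is finite; in the latter situation Lemma~\ref{lem_GM4.3} together with Theorem~\ref{thm_GMoThm4.6} restricts $\Delta(D)$ to one of $C_{2m}$ (with $m\geq 2$), $K_{m,n}$, or $CP_k$. Tree cases are excluded by finiteness.

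The main structural input comes from the triangle. Fix a directed triangle on $x,y,z$ with edges $xy,yz,zx$. Because $\Delta(D)$ is a bipartite reachability digraph, no vertex can be both a source and a sink within one reachability digraph; and because any two out-edges at a vertex are joined by a length-$2$ alternating walk through that vertex, each vertex $v$ lies in exactly two reachability digraphs, $\Delta^+(v)$ containing all out-edges at $v$ and $\Delta^-(v)$ containing all in-edges at $v$. The three triangle edges $zx,xy,yz$ therefore lie in three pairwise distinct reachability digraphs $\Delta_1,\Delta_2,\Delta_3$ respectively, cyclically arranged so that $x,y,z$ are sources of $\Delta_2,\Delta_3,\Delta_1$ and sinks of $\Delta_1,\Delta_2,\Delta_3$. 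Reading degrees off at $x$ yields that $d^+$ equals the sink-size of $\Delta_2$ and $d^-$ equals the source-size of $\Delta_1$, and Lemma~\ref{lem_NoKnButC3Degree} forces these sizes to agree.

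I would then case-split on $\Delta(D)$. If $\Delta(D)\isom K_{m,n}$, the degree identity forces $m=n=k$; define $V_1,V_2,V_3$ to be the source sides of $\Delta_2,\Delta_3,\Delta_1$. Since within $\Delta_2\isom K_{k,k}$ every source sends edges to every sink, and since all out-edges of any vertex of $V_1$ are already accounted for by $\Delta_2$, any two vertices of $V_1$ have the same out-neighborhood; the same holds for $V_2,V_3$. Using C-homogeneity plus the one-end hypothesis to close the cycle of reachability digraphs after three steps gives $VD=V_1\cup V_2\cup V_3$ with $|V_i|=k$, the quotient by ``same out-neighborhood'' is $C_3$, and so $D\isom C_3[\bar K_k]$, which is case~(ii). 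If $\Delta(D)\isom CP_k$ the analogous cyclic partition $V_1,V_2,V_3$ is obtained, but now the induced subdigraph on $V_i\cup V_{i+1}$ is $CP_k$; applying C-homogeneity to the three unmatched partners of $x$ (one in each $V_i$) produces a copy of $C_3$ in the tripartite complement of $D$, and iterating shows the tripartite complement is the disjoint union of $k$ copies of $C_3$, which is exactly the definition of~$Y_k$, giving case~(iii). The remaining case $\Delta(D)\isom C_{2m}$ gives case~(iv) directly when $m\geq 4$; the values $m=2$ (where $C_4\isom K_{2,2}$) and $m=3$ are absorbed into or ruled out by the previous subcases via direct inspection using the triangle.

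The main obstacle I expect is the cyclic-closure step in the $K_{k,k}$ and $CP_k$ subcases: proving that the three reachability digraphs around the chosen triangle are already all of the reachability digraphs of $D$, so that $V_1\cup V_2\cup V_3$ exhausts $VD$. Without this one could iteratively follow $\Delta^-$ at sources to build an unbounded chain of reachability digraphs glued around triangles, and the role of the single-end hypothesis is precisely to prevent such an extension---the natural obstruction being the production of a second end along such an infinite chain.
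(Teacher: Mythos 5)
Your reduction to the finite cases of Theorem~\ref{thm_GMoThm4.6} via Lemma~\ref{lem_DeltaBipThenFinite}, and your identification of the local picture around a directed triangle (each vertex lying in one ``out'' and one ``in'' reachability digraph, the three triangle edges lying in three distinct ones), match the paper's setup. But the proposal omits the two steps that constitute essentially all of the actual work, and you flag only one of them. The step you do not flag is the more serious one: you pass from ``$x,y,z$ are sources/sinks of $\Delta_1,\Delta_2,\Delta_3$'' to a clean tripartition $V_1,V_2,V_3$ by implicitly assuming that the sink side of $\Delta_i$ \emph{equals} the source side of $\Delta_{i+1}$. A priori two consecutive reachability digraphs could meet in a single vertex (or in a proper subset of a side), in which case the digraph branches tree-like and no tripartition exists --- this is exactly what happens in the many-ended examples. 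Excluding it is where the paper spends most of its effort: it uses Lemma~\ref{lem_NoKnButC3NumberOfC3} to show that if $|\Delta\cap\Delta'|=1$ then every edge lies on a unique triangle, then derives a contradiction from explicit five-vertex configurations (and, in the $CP_k$ case, Claim~\ref{clm_DeltaBip+C3ThenDirectedCycle1} showing the intersection is a whole side, again via a concrete configuration that C-homogeneity cannot realize). None of this is present or replaceable by the one-end hypothesis alone, since the single-intersection case is ruled out \emph{locally}, not by counting ends.

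The step you do flag --- the ``cyclic closure,'' i.e.\ that the chain of reachability digraphs wraps up after three steps so that $V_1\cup V_2\cup V_3=VD$ --- you correctly identify as an obstacle but leave unresolved. The paper's mechanism (inherited from the proof of Lemma~\ref{lem_DeltaBipThenDirectedCycle}) is to quotient by the ``same side of a reachability digraph'' relation once the whole-side overlap is established: the quotient is a C-homogeneous digraph with in- and out-degree $1$, hence by Lemma~\ref{lem_IndependentDegree1} a directed cycle or a two-ended line, the latter excluded by the one-end hypothesis, and the projected triangle forces the cycle to be $C_3$. Your instinct about the role of the single end is right, but without the whole-side overlap the quotient relation is not even well defined, so the argument cannot be run in the order you propose. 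Finally, in the $CP_k$ case the claim that ``iterating'' the unmatched-partner argument shows the tripartite complement is a disjoint union of triangles needs the paper's additional argument ruling out longer cycles (e.g.\ $\overline{D}\isom C_9$), which again comes down to an edge lying on differing numbers of triangles contradicting C-homogeneity.
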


\begin{proof}
Let us assume that the digraph $D$ has at most one end, that case (iv) does not occur, and that $d^+,d^-\geq 2$ by Lemma~\ref{lem_IndependentDegree1}.
By Theorem~\ref{thm_GMoThm4.6}, we know that $\Delta(D)$ is either a semi-regular tree---which is impossible in our situation because of Lemma~\ref{lem_DeltaBipThenFinite}---, a complete bipartite digraph, a $CP_k$, or a cycle $C_{2m}$---which we also excluded.
Let us first assume that $\Delta(D)$ is a complete bipartite digraph $K_{k,l}$ for $k,l\in\nat$ but not $K_{2,2}$.
By Lemma~\ref{lem_NoKnButC3Degree}, we know that $k=l$.
If, for two reachability digraphs $\Delta,\Delta'$, there is $|\Delta\cap\Delta'|\geq 2$, then it is a direct consequence of the C-homogeneity that $\Delta\cap\Delta'$ is a complete side of each of $\Delta,\Delta'$.
Thus it is---like in the proof of Lemma~\ref{lem_DeltaBipThenDirectedCycle}---a~direct consequence that (ii) holds in this case.
So let us suppose that $\Delta\cap\Delta'$ has cardinality~$1$.
If an edge lies on more than one directed triangle, then we know from Lemma~\ref{lem_NoKnButC3NumberOfC3} that it lies on at least $k-1$ distinct such triangles.
But then, the intersection $\Delta\cap\Delta'$ has to contain at least $k-1$ elements which is a contradiction.
So every edge lies on a uniquely determined directed cycle of length~$3$.

\begin{clm}\label{clm_DeltaBip+C3ThenDirectedCycle2}
For every four distinct reachability digraphs $\Delta_1,\Delta_2,\Delta_3,\Delta_4$ such that $\Delta_i\cap\Delta_{i+1}$ ($i=1,2,3$) is not empty and such that $(\Delta_{i-1}\cup\Delta_{i+1})\cap\Delta_i$ lies on the same side of $\Delta_i$ for $i=2,3$, $\Delta_1\cap\Delta_4$ is not empty, too, and its intersection lies on the same side of $\Delta_4$ as $\Delta_3\cap\Delta_4$.
\end{clm}

\begin{proof}[Proof of Claim~\ref{clm_DeltaBip+C3ThenDirectedCycle2}]
Let us suppose that $\Delta_1\cap\Delta_4$ is empty.
Since every edge lies on a directed triangle, there has to be a vertex $x$ with successors in $\Delta_1$ and $\Delta_2$.
Let $y$ be its sucessor in $\Delta_1$, $z$ be its successor in $\Delta_2$ and let $a$ be the vertex in $\Delta_2\cap\Delta_3$, $b$ the vertex in $\Delta_3\cap\Delta_4$.
Then there is no automorphism of~$D$ that maps $a$ to~$b$ and fixes all of $x,y,z$, because $\Delta_1\cap\Delta_2\neq\es$ but $\Delta_1\cap\Delta_4=\es$.
Hence we proved the claim.
\end{proof}

Now we are able to show that the whole situation cannot occur.
Let $x,y$ be two vertices on the same side of a reachability digraph such that their out-degree is~$0$.
Let $a,b$ be successors of $x,y$, respectively, such that they lie in a common reachability digraph.
As $k\geq 3$ and as every edge lies on precisely one copy of~$C_3$, there is a successor $c$ of $a$ and $b$ such that neither $D[x,a,c]$ nor $D[y,b,c]$ are triangles.
Furthermore, there exists a predecessor $z$ of~$b$ such that $z$ and $c$ are not adjacent.
The vertices $a$ and $z$ cannot be adjacent because otherwise $y$ and $x$ have to lie in two common reachability digraphs which we supposed to be false.
Then $D[x,a,c,b,y]$ and $D[x,a,c,b,z]$ are isomorphic, but there is no automorphism of~$D$ that maps one onto the other just by fixing all of $x,a,c,b$.
Thus we showed that there are no two reachability digraphs whose intersection consists of precisely one vertex.
This finishes the case $\Delta(D)\isom K_{k,l}$.

\medskip

The next and final situation which we consider is $\Delta(D)\isom CP_k$ for a $k\geq 4$.
Let $\Delta_1,\Delta_2$ be two distinct reachability digraphs of~$D$ with non-trivial intersection.
We prove that $|\Delta_1\cap\Delta_2|\geq 2$.
So suppose that $|\Delta_1\cap\Delta_2|=1$.
Let $b\in VD$ and let $a,c$ be two predecessors of~$b$.
Let $x,y$ be two predecessors of~$a$ and let $v$ ($w$) be a vertex such that $av,vx$ ($aw,wy$, respectively) lie in~$ED$ and such that $cw$ but not $cv$ lies in~$ED$.
Then the digraphs $D[a,b,c,x]$ and $D[a,b,c,y]$ are isomorphic but there is no automorphism of~$D$ that maps them onto each other because such an automorphism has to map $v$ onto~$w$ but $w$ is adjacent to $c$ and $v$ is not.

\begin{clm}\label{clm_DeltaBip+C3ThenDirectedCycle1}
The set $\Delta_1\cap\Delta_2$ is one whole side of each of $\Delta_1,\Delta_2$.
\end{clm}

\begin{proof}[Proof of Claim~\ref{clm_DeltaBip+C3ThenDirectedCycle1}]
Let us first suppose that $\Delta_1\cap\Delta_2$ is not contained in any of the sides of $\Delta_1$.
Then $\Delta_1\cap \Delta_2$ consists of precisely two vertices that are adjacent in the bipartite complement of~$\Delta_1$.
Let us consider the subdigraph of~$\Delta_i$ with vertices $a,b,c,d$, with edges $ba,bc,dc$ such that $a,d\in\Delta_1\cap\Delta_2$.
Let $x,y$ be two predecessors of $d$ in~$\Delta_j$ with $i\neq j$ and let $z$ be the neighbor of~$x$ in the bipartite complement of~$\Delta_j$.
Since each edge lies on a directed triangle, we may assume that $b,a,z$ form such a triangle and, since $k\geq 4$, we also may assume that $c$ and $y$ do not lie in a common reachability digraph.
Then neither $c$ nor $y$ lies in a common reachability digraph with $b$ and~$z$.
So each of the subdigraphs $D[a,b,c,d,x]$ and $D[a,b,c,d,y]$ contains precisely $4$ edges and they are isomorphic to each other.
Hence there is an automorphism $\alpha$ that fixes each of $a,b,c,d$ and maps $x$ onto~$y$ which is impossible because $y$ and $b$ do not lie in any common reachability digraph in contrast to~$x$ and~$b$.
Thus we proved that $\Delta_1\cap \Delta_2$ is contained in one side of $\Delta_i$, $i=1,2$.

The C-homogeneity directly implies that $\Delta_1\cap\Delta_2$ is a whole side of~$\Delta_i$, $i=1,2$.
Thus we proved the claim.
\end{proof}

We shall now show that $D\isom Y_k$.
Let $\overline{D}$ denote the tripartite complement of~$D$
Since $\Delta(D)\isom CP_k$, the digraph $\overline{D}$ is a union of directed cycles.
We want to show that every component of~$\overline{D}$ is a directed cycle of length~$3$.
So let us suppose that this is not the case.
Then there are $x,y\in V_1$ that lie on a common directed cycle of length at least~$6$ and have distance $3$ on that cycle in~$\overline{D}$.
Since $k\geq 3$, there is a vertex $a\in V_2$ that is adjacent to both $x$ and $y$.
We conclude that for every vertex $z\in V_1$, distinct from $x$, we have that $x$ and $z$ lie on a common cycle and have distance $3$ on that cycle.
It is a direct consequence that $k=3$ and $\overline{D}\isom C_9$.
But then there are edges of~$D$ that lie on precisely one copy of~$C_3$ and some lie on two copies which contradicts the C-homogeneity.
Hence we have $D\isom Y_k$.
\end{proof}

\begin{lem}\label{lem_C3ThenBipReach}
Let $D$ be a connected locally finite C-homogeneous digraph that contains a directed triangle.
Furthermore, assume that $N^+(x)$ and $N^-(x)$ are independent sets for all $x\in VD$.
Then the reachability relation of~$D$ is not universal.
\end{lem}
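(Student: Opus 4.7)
I would adapt the proof of Lemma~\ref{lem_NoC3ThenBipReach}. Fix an edge $xy\in ED$ and write $H:=\Gamma_{xy}$, $T:=N^+(y)\cap N^-(x)$. Every element of $T$ is a forced $H$-fixed point (as the unique third vertex of a directed triangle on $xy$), so Proposition~\ref{prop_MMMSTZ3.2MoreGeneral} cannot be applied to $\Omega=N^+(y)$ the way it is in Lemma~\ref{lem_NoC3ThenBipReach}. By Lemma~\ref{lem_NoKnButC3Degree}, $d:=d^+=d^-$, and by Lemma~\ref{lem_NoKnButC3NumberOfC3}, $|T|\in\{1,d-1,d\}$.

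The natural substitute is $\Omega':=N^+(y)\sm T$, which is $H$-invariant, and C-homogeneity applied to the induced $2$-arcs $D[x,y,w]$ for $w\in\Omega'$ gives $H^{\Omega'}\isom S_{\Omega'}$. If $|T|=1$, so $|\Omega'|=d-1$, I would redo the proof of Proposition~\ref{prop_MMMSTZ3.2MoreGeneral} with $\Omega'$ in place of $\Omega$. The unique-triangle case assumption forces the triangle partner of each edge of an alternating walk $P$ to be $H_P$-fixed; this supplies at least one extra $H_P$-fixed neighbor at the endpoint of $P$ (beyond the previous walk vertex) and brings the $H_P$-orbit of the next candidate vertex $z$ strictly below $|\Omega'|$. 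Proposition~\ref{prop_Wielandt} then forces $(H_P)_z^{\Omega'}\supseteq A_{\Omega'}$, and the $A_{\Omega'}$-case is excluded exactly as in the original proposition by C-homogeneity and independence of $\Omega'$. The remaining induction carries over verbatim, giving that no alternating walk from $xy$ ends in $\Omega'$ and hence no edge $yw$ with $w\in\Omega'$ lies in $\AF(xy)$, so reachability is not universal.

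In the remaining cases $|T|\in\{d-1,d\}$ the set $\Omega'$ has at most one element and the orbit-based argument degenerates, so I would instead exploit the rigid local structure: $|T|=d$ gives $N^+(y)=N^-(x)$ for every edge $xy$, and $|T|=d-1$ gives $N^+(y)\sub N^-(x)\cup\{w\}$ for one distinguished extra vertex $w$. From C-homogeneity and vertex-transitivity these local constraints propagate globally, forcing $D\isom C_3[\bar{K}_d]$ and $D\isom Y_{d+1}$ respectively; in either case the edges split into three classes $V_i\to V_{i+1}$ which an easy induction on walk length shows are pairwise unreachable under alternating walks, so $\AF(xy)\subsetneq ED$. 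The main obstacle throughout is the orbit-reduction step when $|T|=1$: one must verify that at every extension of the walk some triangle partner of a previously traversed edge actually lies among the candidates for the next vertex at the walk's endpoint, and that it is fixed by $H_P$, so that the orbit of the next vertex drops strictly below $|\Omega'|=d-1$ and Proposition~\ref{prop_Wielandt} becomes applicable.
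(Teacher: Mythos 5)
Your strategy is genuinely different from the paper's, and it has a gap at its central step. The paper does not try to rerun the orbit argument of Proposition~\ref{prop_MMMSTZ3.2MoreGeneral} at all: it assumes universality, extracts from a non-alternating cycle an induced copy of a four-vertex configuration $D_1$, combines this with Lemma~\ref{lem_NoKnButC3NumberOfC3} to show every edge lies on exactly one directed triangle, then produces a six-vertex configuration $D_2$ and derives a contradiction from C-homogeneity and the uniqueness of triangle partners. The cases you label $|T|\in\{d-1,d\}$ are in the paper not part of this lemma at all; they are dealt with in Lemma~\ref{lem_DeltaBip+C3ThenDirectedCycle} \emph{after} bipartiteness of $\Delta(D)$ is known, and your claim that they force $D\isom C_3[\bar{K}_d]$ or $D\isom Y_{d+1}$ directly is plausible but only sketched.

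The real problem is the case $|T|=1$, which you correctly identify as the main obstacle but do not resolve — and your proposed fix fails as stated. With $|\Omega'|=d-1$, the orbit of the next vertex $z$ under $H_P$ is contained in the $d-1$ admissible extensions, so the index bound you get is $|H^{\Omega'}:H^{\Omega'}_z|\leq d-1=|\Omega'|$, exactly the boundary case in which Proposition~\ref{prop_Wielandt} permits a point stabilizer and the induction breaks. Your claimed extra fixed candidate, the triangle partner of a traversed edge, is never among the candidates: an alternating walk reverses orientation at each internal vertex, so if the candidates for $z$ lie in $N^+(x_n)$ then $x_{n-1}\in N^+(x_n)$ and the triangle partner of that edge lies in $N^-(x_n)$, and symmetrically in the other case. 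Already at the first extension the candidates are $N^-(y)\setminus\{x\}$; independence of $N^+(x)$ and of $N^-(u)$ for $u\in N^-(y)$ shows none of them is adjacent to $x$, so $H$ is transitive on all $d-1$ of them and no reduction occurs. Worse, once $x_2\in N^-(y)$ is appended, the unique triangle partner of the new edge $x_2y$ is a vertex of $N^+(y)$ fixed by $H_{x_2}$, and unless you can show it always equals the triangle partner $t$ of $xy$ (which requires precisely the configuration analysis the paper performs with $D_1$ and $D_2$), it lies in $\Omega'$ and $H_{x_2}^{\Omega'}$ sits inside a point stabilizer, killing the induction at step one. Note also that $d=2$ with $|T|=1$ genuinely occurs (the $T(2)_\sim$ digraphs of Section~\ref{sec_Imprimitive}, where $\Delta(D)\isom C_{2m}$ or $T_{2,2}$), so any correct argument must still leave room for these; your $|\Omega'|=1$ degeneration is not a removable technicality.
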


\begin{proof}
Let $d=d^+$.
By Lemma~\ref{lem_NoKnButC3Degree} we have $d=d^-$.
Suppose that the reachability relation of~$D$ is universal.
Let $D_1$ be the digraph depicted in Figure~\ref{pic_D1}.

\begin{figure}[h]
\begin{center}
\includegraphics[width=.20\textwidth]{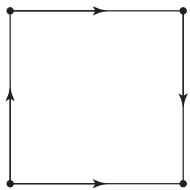}
\caption[Figure 1]{The digraph $D_1$}\label{pic_D1}
\end{center}\end{figure}

\begin{clm}\label{clm_C3ThenBipReach1}
$D$ contains an isomorphic image of~$D_1$.
\end{clm}

\begin{proof}[Proof of Claim~\ref{clm_C3ThenBipReach1}]
Since the reachability relation of~$D$ is universal, there is an induced cycle such that for an edge $xy$ on that cycle the other path between $x$ and $y$ is an alternating path but the such that the whole cycle is not alternating.
Such a cycle shows that the reachability relation is not universal.
To show that such a cycle exists, suppose that it is not the case.
We choose a counterexample $C$ with minimal length.
Since there is always a cycle that shows that the reachability relation is not universal, we may assume that $C$ is not induced.
So there is a chord in~$C$ and hence one of the smaller cycles is a counterexample of smaller length, contrary to the assumption.
Thus a cycle as described exists.

Let us first assume that such a cycle $C$ has odd length.
Then it has length at least~$5$.
By symmetry we may assume that for the edge $xy$ described above we have $d^-_C(x)=1$ and $d^-_C(y)=2$.
Let $z$ be the other vertex in $N^-(y)\cap VC$.
Then there is an automorphism $\alpha$ of~$D$ that maps $C-x$ onto $C-y$.
The digraph $D[x,y,z,x^\alpha]$ is isomorphic to~$D_1$ because $N^-(x)$ and $N^+(x)$ are independent sets.

Let us now consider the case that $C$ is an induced cycle of even length and let $xy$ be again the above described edge.
Let $a\neq y$ be the vertex on~$C$ adjacent to~$x$, let $b\neq x$ be the vertex on~$C$ adjacent to~$y$, let $P_C$ be the path on~$C$ between $a$ and $b$ that contains neither $x$ nor $y$, and let $P_C^-$ denote the path inverse to~$P_C$.
Since $C$ has odd length, we have $P_C\isom P_C^-$.
Then we can map $xP_C$ onto $yP_C^-$ by an automorphism of~$D$ and obtain an induced subdigraph isomorphic to~$D_1$ by the two paths of length $2$ between $a$ and $y$.
\end{proof}

\begin{clm}\label{clm_C3ThenBipReach2}
There is $|N^+(y)\cap N^-(x)|=1$ for all edges $xy\in ED$.
\end{clm}

\begin{proof}[Proof of Claim~\ref{clm_C3ThenBipReach2}]
By Lemma~\ref{lem_NoKnButC3NumberOfC3} we know that either $|N^+(y)\cap N^-(x)|=1$ or $|N^+(y)\cap N^-(x)|\geq d-1$.
So it suffices to prove that $N^+(y)\sm N^-(x)$ contains at least two vertices.
Let $u,v,a,b$ be the four vertices of the digraph $D_1$ such that $u$ has the two predecessors $a$ and $b$.
Since there is an automorphism of~$D$ that fixes $u$ and maps $a$ onto $b$ and vice versa, there is a directed path of length $2$ from $a$ to~$b$ and one from $b$ to~$a$.
Since $N^+(v)$ and $N^-(v)$ are both independent sets and the same holds for $v'$, the image of~$v$ under the described automorphism, there is no edge between $v$ and $v'$.
We may assume by symmetry that $va$ is an edge in~$D$.
Then both $v'$ and $u$ are vertices in $N^+(a)$ that do not lie on a common directed triangle with $va$, so we conclude $N^+(a)\cap N^-(v)$ contains precisely one vertex.
\end{proof}

Let $D_2$ be the digraph shown in Figure~\ref{pic_D2}.

\begin{figure}[h]
\begin{center}
\includegraphics[width=.50\textwidth]{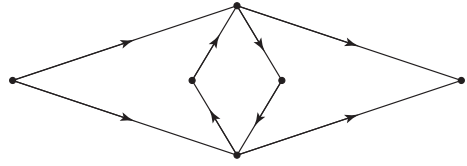}
\caption[Figure 1]{The digraph $D_2$}\label{pic_D2}
\end{center}\end{figure}

\begin{clm}\label{clm_C3ThenBipReach3}
There is an isomorphic image of $D_2$ in~$D$.
\end{clm}

\begin{proof}[Proof of Claim~\ref{clm_C3ThenBipReach3}]
By an analog argument as in the proof of Claim~\ref{clm_C3ThenBipReach2}, we immediately see that either $D_2$ is an induced subgraph or  $D_2$ together with an edge from the vertex on the right hand to the vertex on the left hand is an induced subgraph of~$D$.
But the latter cannot be the case since the additional edge would lie on at least two copies of~$C_3$, contrary to Claim~\ref{clm_C3ThenBipReach2}.
\end{proof}

Now let $D'$ be an isomorphic copy of~$D_2$ in $D$.
Let $x$ be the vertex on the left, $y$ the one on the right and $a,b,u,v$ the vertices of the cycle such that $x$ and $y$ are adjacent to $a$ and $u$.
Since $C_3$ embeds into~$D$, there is a vertex $a'\in N^+(a)\cap N^-(x)$.
Then $a'$ is adjacent neither to~$b$, nor to~$v$, nor to~$y$, since the only directed triangle that contains $aa'$ is $D[x,a,a']$ and since directed cycles are the only cycles of length $3$ that embed into~$D$.
Then there is an automorphism of~$D$ that fixes $a',x$, and $u$, and maps $v$ onto $y$.
This automorphism also has to fix $a$, since it fixes together with $x$ and $a'$ the unique vertex in the directed triangle that contains the edge $a'x$.
Hence such an automorphism cannot exist.
\end{proof}

\section{An imprimitive case}\label{sec_Imprimitive}

In this section we investigate the following situation.
Let $D$ be a C-homogeneous digraph that contains directed triangles of length $3$ and whose reachability digraph is either $T_{2,2}$ or $C_{2m}$ for an $m\geq 2$.
There exists a well-known such digraph \cite{GM-CHomDigraphs}, the digraph $T(2)$ that was defined in the introduction.
This digraph has infinitely many ends.
But although we are interested only in digraphs with at most one end, this particular digraph turns out to be very important in this case.
We shall show that every digraph with the above described properties and with at most one end is a homomorphic image of $T(2)$ in a very particular way.

\begin{thm}\label{thm_ImprimitiveCase}
The following two assertions are equivalent for any locally finite connected digraph~$D$.
\begin{enumerate}[{\em (i)}]
\item The digraph $D$ is C-homogeneous, contains directed cycles of length $3$, and its reachability digraph is either $T_{2,2}$ or $C_{2m}$ for an $m\geq 2$.
\item There exists a subgroup $H$ of $\Aut(T(2))$ acting transitively on $VT(2)$ and an $H$-invariant equivalence relation $\sim$ on $VT(2)$ such that $T(2)_{\sim}$ is isomorphic to $D$.
\end{enumerate}
Furthermore, the digraph has at most one end if and only if each equivalence class consists of more than one element.

In the situation {\em (ii)} we may always choose $H$ to be the whole automorphism group of~$D$.
\end{thm}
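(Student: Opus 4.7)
The plan is to prove the two implications separately, and then address the end condition and the final statement about~$H$. For the direction (ii) $\Rightarrow$ (i), given $H \leq \Aut(T(2))$ transitive on $VT(2)$ together with an $H$-invariant equivalence relation $\sim$, I would first verify that $D := T(2)_\sim$ is a digraph in the paper's restrictive sense: transitivity of $H$ forces all equivalence classes to have the same size (they form a block system of $H$), and combined with the local structure of $T(2)$ this rules out loops and antiparallel edges. Local finiteness and connectedness are inherited from $T(2)$, and images of the directed triangles of $T(2)$ descend to directed triangles in~$D$. The reachability digraph $\Delta(D)$ is the image under projection of $\Delta(T(2))$, which one computes to be $T_{2,2}$, so $\Delta(D)$ is either $T_{2,2}$ itself or a finite cycle $C_{2m}$ (the latter arising when $\sim$ cyclically identifies vertices along $T_{2,2}$). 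For C-homogeneity I would, given an isomorphism $\phi\colon A \to B$ of finite connected induced subgraphs of~$D$, lift $A$ to $\tilde A \sub VT(2)$ by following the connectivity of $A$ and using local bijectivity of the projection, lift $\phi$ to $\tilde\phi\colon \tilde A \to \tilde B$, and extend it to $\tilde\alpha \in \Aut(T(2))$ by the known C-homogeneity of~$T(2)$. The delicate point will be to adjust $\tilde\alpha$ by a suitable element of $H$ so as to preserve~$\sim$, which then descends to an automorphism of~$D$ extending~$\phi$.

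For the direction (i) $\Rightarrow$ (ii) I would build $T(2)$ as a universal cover of~$D$. Fix $x_0 \in VD$ and a formal preimage $\tilde x_0$. Inductively construct a digraph $\tilde D$ with a projection $\pi\colon \tilde D \to D$ by: at each stage, for every already-added $\tilde v \in \tilde D$ with $\pi(\tilde v) = v$, attach freshly-created preimages for every directed triangle of $D$ incident with $v$ and every reachability-digraph neighbor of $v$ not yet lifted. Because $\Delta(D)$ is $T_{2,2}$ or $C_{2m}$ and $D$ contains directed triangles, the local structure of $D$ at every vertex matches the local structure of $T(2)$ (two outgoing and two incoming triangles together with the prescribed bipartite reachability structure), so the construction never forces an unintended identification; consequently $\tilde D \isom T(2)$. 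The equivalence relation $\tilde u \sim \tilde v :\Lra \pi(\tilde u) = \pi(\tilde v)$ then yields $T(2)_\sim \isom D$. To produce~$H$, I would lift the whole of $\Aut(D)$: each automorphism of~$D$ lifts uniquely to an automorphism of~$T(2)$ once its action on~$\tilde x_0$ is prescribed, and the collection of such lifts forms a subgroup of $\Aut(T(2))$ that is transitive on~$VT(2)$ (because $\Aut(D)$ is vertex-transitive by C-homogeneity) and preserves~$\sim$. This also delivers the final assertion that $H$ may be taken to be $\Aut(D)$.

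For the end condition, note first that $T(2)$ has infinitely many ends. If some equivalence class of $\sim$ is a singleton, then transitivity of $H$ forces every class to be a singleton and hence $D \isom T(2)$, contradicting the assumption of at most one end. Conversely, if every class has at least two elements, then each non-trivial $\sim$-identification glues together vertices lying in different branches of the tree-of-triangles~$T(2)$, so that no finite vertex cut in $D$ leaves more than one infinite component; this argument needs some care but is driven by the cut-vertex property of~$T(2)$. I expect the main obstacle of the whole proof to be the direction (i) $\Rightarrow$ (ii), specifically verifying that the inductive construction of~$\tilde D$ yields exactly~$T(2)$ and not a strictly finer covering; this amounts to checking in detail that the triangles and reachability-digraph edges at each vertex of~$D$ behave in the precise combinatorial way dictated by~$T(2)$, and then that automorphisms of $D$ can be lifted canonically.
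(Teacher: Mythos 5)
Your overall architecture matches the paper's: $T(2)$ covers $D$, the equivalence relation is the fibre partition of the covering, and automorphisms are transported between $T(2)$ and $D$ along it. For (i)$\Rightarrow$(ii) your universal-cover construction is essentially the paper's argument run in the opposite direction: the paper defines the projection $T(2)\to D$ directly, sending a vertex $u$ to the endvertex of one of the two $D$-paths that match the unique shortest $T(2)$-path from a base vertex to $u$, and takes $\sim$ to be its fibres. Both versions need the same local verification (that $\Delta(D)\isom T_{2,2}$ or $C_{2m}$ together with the presence of directed triangles pins down the local structure), and yours is workable.

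The genuine gap is in (ii)$\Rightarrow$(i), at exactly the point you flag as ``delicate'' and then leave unresolved: why can the lifted automorphism $\tilde\alpha\in\Aut(T(2))$ be replaced by one lying in $H$, so that it descends to $D$? The paper's missing ingredient is a rigidity argument: replace $H$ by the maximal subgroup of $\Aut(T(2))$ leaving $\sim$ invariant, and prove that then $H_x=\Aut(T(2))_x$, a group of order $2$. Since an automorphism of $T(2)$ is determined by its action on a single edge (isomorphisms between triangles of $T(2)$ extend uniquely), vertex-transitivity of $H$ together with $|H_x|=2$ makes $H$ transitive on $1$-arcs, so the element of $H$ agreeing with $\tilde\alpha$ on one lifted edge of $A$ coincides with $\tilde\alpha$ everywhere; hence $\tilde\alpha\in H$ and descends. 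Without something of this kind, ``adjust by a suitable element of $H$'' has no content, and this is where the real work of the theorem sits. Two smaller issues: (a) your argument that non-singleton classes force at most one end is only a hope (you say it ``needs some care''); the paper outsources this to a theorem of Gray and M\"oller, and your cut-vertex sketch does not yet exclude, say, a two-ended quotient. (b) For the final assertion the paper actually shows the constructed $\sim$ is invariant under \emph{all} of $\Aut(T(2))$, which is what the main theorem's item (iv) requires; the group of lifts of $\Aut(D)$ that you build is a priori a proper subgroup of $\Aut(T(2))$, so your version establishes something weaker.
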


\begin{proof}
First, let us assume that (ii) holds.
We choose $H$ so that it is maximal such that $\sim$ is $H$-invariant.

\begin{clm}\label{clm_ImprimitiveCase1}
The stabilizer $H_x$ of any vertex $x$ of~$T(2)$ has order $2$.
\end{clm}

\begin{proof}[Proof of Claim~\ref{clm_ImprimitiveCase1}]
There are two possibilities for an element of~$H_x$.
Either it is fixes both directed triangles that meet $x$ or it changes the two triangles.
As every isomorphism between two directed triangles of~$T(2)$ extends uniquely to an automorphism of~$T(2)$, $|H_x|\leq 2$.
So we have to prove that the element $\alpha\neq id$ of $\Aut(T(2))_x$ is also contained in~$H$.
Because $H$ is the maximal subgroup of~$\Aut(T(2))$ such that $\sim$ is $H$-invariant, we have to prove that $\alpha$ maps one equivalence class onto another.
Let $y,z$ be the two predecessors of $x$.
If they lie in the same equivalence class, then this is fixed by~$\alpha$ and the same holds for the equivalence class that contains both successors of~$x$ and this extends to all equivalence classes because of the transitivity of~$H$ on $VT(2)$.
So $y$ and $z$ lie in distinct equivalence classes.
But then $\alpha$ maps the equivalence class of~$y$ onto the one of~$z$ and vice versa, and the same holds for the two equivalence classes of the two successors of~$x$.
By induction on $d(x,a)$ for any vertex $a$ of~$T(2)$ its equivalence class is mapped onto the one of the unique vertex $b$ with $d(x,b)=d(x,a)$ and for which the shortest path from $x$ to~$b$ is isomorphic to the shortest path from $x$ to~$a$.
So $\alpha$ acts on all equivalence classes and $\sim$ is $\alpha$-invariant.
\end{proof}

To show that $D\isom T(2)_\sim$ is C-homogeneous, let $A,B$ be isomorphic induced connected subdigraphs of~$D$ and $\varphi:A\to B$ be an isomorphism.
Then there are induced subdigraphs $A',B'$ of~$T(2)$ with $A\isom A'$, $B\isom B'$ and such that the equivalence classes of the vertices of~$A$ (of~$B$) are the vertices of $T(2)_\sim$ that induce the digraph $A$ (the digraph $B$, respectively).
Let $\varphi_0$ be the isomorphism $A'\to B'$ that maps the equivalence class of $x\in VA$ to the equivalence class of~$x^\varphi$.
We may assume that $A$ contains an edge $xy$.
Let $u,v$ be vertices in~$T(2)$ such that $uv\in ET(2)$ and such that the equivalence class of $u, v$ is $x, y$, respectively.
Since $H_{x^\varphi}$ has order $2$ by Claim~\ref{clm_ImprimitiveCase1}, there is an automorphism $\alpha\in H$ with $u^\alpha=u^{\varphi_0}$ and with $v^\alpha=v^{\varphi_0}$.
But then the claim immediately implies $A'^\alpha=B'$ and that the canonical image $\alpha'$ of~$\alpha$ maps $A$ onto $B$ like $\varphi$.
Furthermore, $\alpha'$ is an automorphism of~$D$ because $\alpha\in H$.

For the other direction, let $D$ fulfill the assumptions of (i).
Let $\pi$ be the map $T(2)\to T(2)_\sim$ that maps $x\in VT(2)$ onto its equivalence class.
We may assume that $D$ is not isomorphic to~$C_3$.
Let $xy\in ED$, $ab\in ET(2)$.
For every vertex $u$ in~$T(2)$ there exists a unique shortest path $P_1$ from $a$ to~$u$.
In $D$ there are precisely two isomorphic such paths with the property that no two endvertices of any subpath of length $2$ are adjacent.
If the second vertex of the path $P_1$ is~$b$ or is adjacent to~$b$, then let $P_2$ that one of the above described paths in~$D$ whose second vertex is~$y$ or is adjacent to~$y$, and in the other case for $P_1$ let $P_2$ be also the other one in~$D$.
Let $u_D$ denote the last vertex of~$P_2$.

We are now able to define the equivalence relation.
Let $u\sim v$ for two vertices $u,v\in VT(2)$ if $u_D=v_D$.
This is obviously an equivalence relation.
It remains to show that it is $\Aut(T(2))$-invariant.
So let $u$ and $v$ be arbitrary vertices of~$T(2)$ and let $\psi$ be an automorphism of~$T(2)$ with $u^\psi=v$.
We have to show that the equivalence class of~$u$ is mapped onto the one of~$v$.
So let $w\sim u$.
It suffices to consider the case where the shortest path from $u$ to $w$ does not contain any other vertex of the equivalence class that contains $u$.
Let $P$ be the shortest path from $u$ to~$w$.
We look at the paths $P^\pi$ and $(P^\psi)^\pi$.
The path $P^\pi$ starts and ends at the same vertex.
We can map $Q^{\varphi_0}$ for every subpath $Q$ of~$P$ that starts in~$u$ onto $(Q^\psi)^\pi$ inductively, because on the one hand $D$ is C-homogeneous and on the other hand for such a $Q$ its succeeding vertex is uniquely determined in $D$ by the two digraphs $Q^\pi$ and $(Q^\psi)^\pi$.
So we conclude that also $(P^\psi)^\pi$ has the same endvertices.
But then $u^\psi$ and $w^\psi$ have to be equivalent.
It is an immediate consequence that this holds also for any $z\sim u$.

The only remaining part is to show is the additional claim on the multi-ended digraphs which is a direct consequence of~\cite[Theorem~7.1]{GM-CHomDigraphs}.
\end{proof}

Figure~\ref{pic_OneEnded} shows two C-homogeneous digraphs that arise as factor digraphs in Theorem~\ref{thm_ImprimitiveCase} one of which is finite and the other being infinite and one-ended.
In the finite digraph every reachability digraph, which is isomorphic to~$C_{10}$, is drawn in a different shade of gray.
The reachability digraphs of the infinite digraph are the cycles of length~$6$.

\begin{figure}[h]
\begin{center}
\hfill
\includegraphics[width=.40\textwidth]{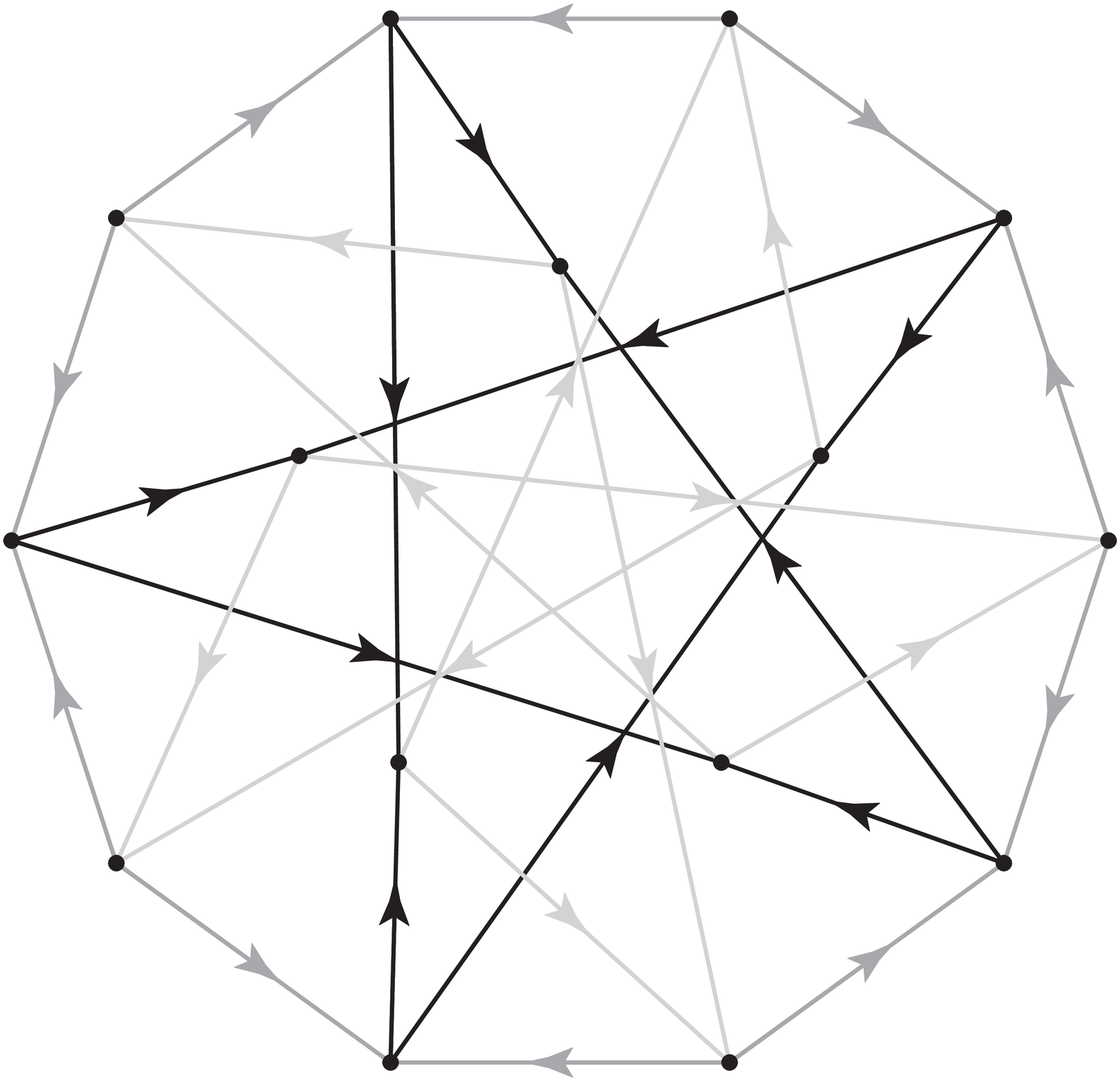}
\hfill
\includegraphics[width=.40\textwidth]{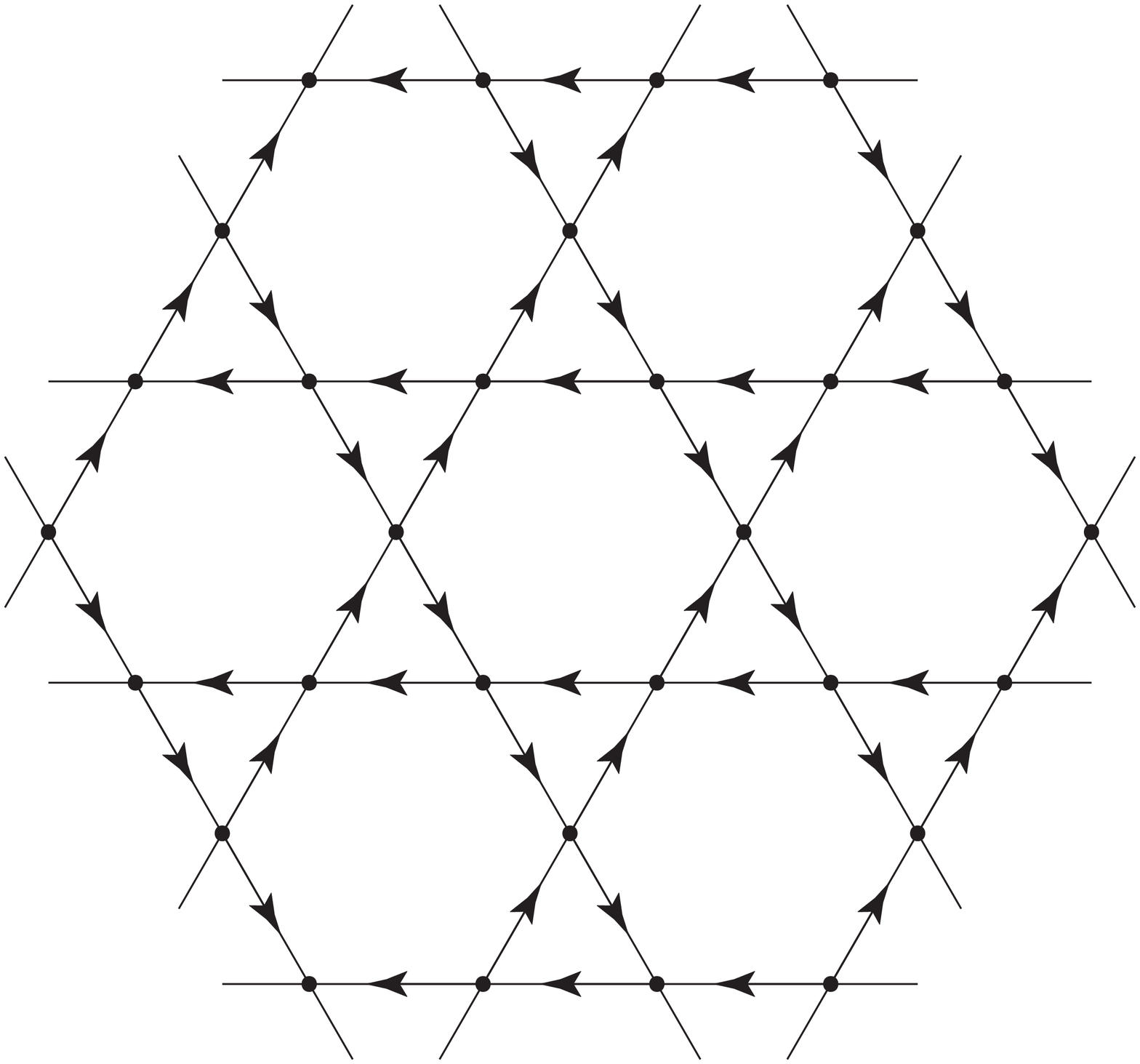}
\hfill $\left.\right.$
\caption[Figure 1]{A finite and an infinite one-ended C-homogeneous digraph}\label{pic_Z522}\label{pic_OneEnded}
\end{center}\end{figure}

\section{The main theorem}

Let us now state our main result.
We shall prove it by applying all the results of the previous sections.

\begin{thm}\label{thm_Main1}\label{thm_Main2}
Let $D$ be a connected digraph with at most one end.
Then $D$ is C-homoge\-neous if and only if one of the following cases holds.
\begin{enumerate}[{\em (i)}]
\item $D\isom C_m[\bar{K}_n]$ for integers $m\geq 3,n\geq 1$;
\item $D\isom H[\bar{K}_n]$ for an integer $n\geq 1$;
\item $D\isom Y_k$ for an integer $k\geq 3$;
\item there exists a non-trivial $\Aut(T(2))$-invariant equivalence relation $\sim$ on $VT(2)$ such that $D\isom T(2)_\sim$.
\end{enumerate}
\end{thm}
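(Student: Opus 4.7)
The plan is to prove both directions. For the \emph{if} direction, each family in (i)--(iv) needs to be verified to be C-homogeneous and to have at most one end. This is routine for (i) and (ii) (the lexicographic product with $\bar{K}_n$ preserves C-homogeneity of a homogeneous digraph, and both $C_m$ and $H$ are homogeneous by Theorem~\ref{thm_HomDigraphs}), a direct structural check for (iii), and for (iv) it is the $(ii)\Rightarrow(i)$ half of Theorem~\ref{thm_ImprimitiveCase} together with its final assertion identifying one-endedness with every equivalence class having more than one element.

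For the \emph{only if} direction I would let $D$ be a connected locally finite C-homogeneous digraph with at most one end and fix $x\in VD$. Since C-homogeneity forces both $N^+(x)$ and $N^-(x)$ to be homogeneous, Theorem~\ref{thm_HomDigraphs} confines each to the list $\bar{K}_n,\,C_4,\,\bar{K}_n[C_3],\,C_3[\bar{K}_n],\,H$. I would then split on whether these neighbourhoods contain an edge.

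In the \emph{non-independent case}, by symmetry I would assume $N^+(x)$ contains an edge. By Lemma~\ref{lem_NoH} and Lemma~\ref{lem_NoC4} this neighbourhood is then of the form $\bar{K}_n[C_3]$ or $C_3[\bar{K}_n]$ for some $n\geq 1$. Picking a directed triangle inside $N^+(x)$ together with $x$ and inspecting the in-neighbourhood of one of its vertices shows that $N^-(x)$ also contains an edge, so $N^-(x)$ too is of one of these two shapes. If either has the form $C_3[\bar{K}_n]$, Lemma~\ref{lem_C3KnImpliesHKn} gives $D\isom H[\bar{K}_n]$, i.e.\ case (ii). Otherwise both are of the form $\bar{K}_m[C_3]$, so Lemma~\ref{lem_KnC3ImpliesH} collapses the multiplicities to $1$, making each neighbourhood equal to $C_3=C_3[\bar{K}_1]$, and Lemma~\ref{lem_C3KnImpliesHKn} then gives $D\isom H=H[\bar{K}_1]$---again case (ii).

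In the \emph{independent case}, if $d^+=1$ or $d^-=1$ then Lemma~\ref{lem_IndependentDegree1} leaves only the directed cycle (the infinite tree has infinitely many ends, hence is excluded), which is case (i) with $n=1$. Otherwise $d^+,d^-\geq 2$. If $D$ contains no directed triangle, Lemma~\ref{lem_NoC3ThenBipReach} says the reachability relation is not universal, so Proposition~\ref{prop_CPW} makes $\Delta(D)$ a bipartite reachability digraph, and Lemma~\ref{lem_DeltaBipThenDirectedCycle} then forces $D\isom C_m[\bar{K}_n]$ with $m\geq 4$, i.e.\ case (i). If $D$ does contain a directed triangle, Lemma~\ref{lem_C3ThenBipReach} and Proposition~\ref{prop_CPW} once more make $\Delta(D)$ bipartite, and Lemma~\ref{lem_DeltaBip+C3ThenDirectedCycle} leaves, under the one-end hypothesis, three sub-cases: $D\isom C_3[\bar{K}_k]$, which is case (i); $D\isom Y_k$ for $k\geq 3$, which is case (iii); or $\Delta(D)\in\{C_{2m},T_{2,2}\}$, which combined with the presence of a directed triangle places $D$ in the setting of Theorem~\ref{thm_ImprimitiveCase} and therefore exhibits it as a non-trivial $\Aut(T(2))$-invariant quotient $T(2)_\sim$---case (iv).

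The work above is mostly book-keeping, since the real content has already been absorbed into the earlier lemmas. The most delicate step will be the independent-with-triangle branch: I need to confirm that the $C_3[\bar{K}_k]$ outcome of Lemma~\ref{lem_DeltaBip+C3ThenDirectedCycle} is genuinely an instance of case (i) (take $m=3$), that the $Y_k$ outcome respects the range $k\geq 3$ stipulated in case (iii), and---most importantly---that the $C_{2m}/T_{2,2}$ outcome together with the at-most-one-end hypothesis is exactly captured by the $T(2)_\sim$ description in case (iv), which is where the final assertion of Theorem~\ref{thm_ImprimitiveCase} linking one-endedness to the absence of singleton equivalence classes does the essential bookkeeping.
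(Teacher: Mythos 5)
Your proposal follows essentially the same route as the paper: the same split into the non-independent and independent cases, the same chains of lemmas in each branch, and the same appeal to Theorem~\ref{thm_ImprimitiveCase} for the $C_{2m}/T_{2,2}$ sub-case; the forward direction is assembled correctly and in places spelled out in more detail than the paper itself. Two caveats on the converse (``if'') direction. First, your justification that $C_m[\bar{K}_n]$ is C-homogeneous rests on the claim that $C_m$ is homogeneous by Theorem~\ref{thm_HomDigraphs}; this is false for $m\geq 5$ (only $C_3$ and $C_4$ occur in Lachlan's list, and e.g.\ in $C_5$ no automorphism fixing $v_1$ carries the non-edge $\{v_1,v_3\}$ to the non-edge $\{v_1,v_4\}$). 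The conclusion is still true, but the paper derives it instead from the fact that $K_{n,n}$ is a homogeneous \emph{bipartite} graph; alternatively, note that connected induced subdigraphs of $C_m[\bar{K}_n]$ are governed by arcs of $C_m$, and $C_m$ is C-homogeneous. Second, calling the C-homogeneity of $Y_k$ ``a direct structural check'' glosses over the one place where the paper's converse does genuine work: one must extend a given isomorphism of connected induced subdigraphs first across a minimal sub-$CP_l$ of one reachability digraph (using that $CP_l$ is a C-homogeneous bipartite graph) and then observe that this extension determines the images of the relevant vertices of the third part uniquely. Your outline should either carry out that argument or at least acknowledge it as the substantive step.
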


\begin{proof}
Let $D$ be a connected locally finite C-homoge\-neous digraph with at most one end.
If the out-neighborhood (or symmetrically the in-neighborhood) of any vertex of~$D$ is not independent, then we conclude from Theorem~\ref{thm_HomDigraphs}, Lemma~\ref{lem_NoH}, Lemma~\ref{lem_NoC4}, Lemma~\ref{lem_KnC3ImpliesH}, and Lemma~\ref{lem_C3KnImpliesHKn} that $D$ is finite and isomorphic to an $H[\bar{K}_n]$ for an $n\geq 1$.
So we may assume that the out-neighborhood of every vertex is independent.
Since $D$ is in particular $1$-arc transitive, we conclude from Proposition~\ref{prop_CPW}, Lemma~\ref{lem_NoC3ThenBipReach}, and Lemma~\ref{lem_C3ThenBipReach} that the reachability digraph of~$D$ is bipartite.
Thus one direction of the theorem follows directly from Lemma~\ref{lem_DeltaBipThenDirectedCycle}, Lemma~\ref{lem_DeltaBip+C3ThenDirectedCycle}, and Theorem~\ref{thm_ImprimitiveCase}.

\medskip

To prove the remaining part of Therorem~\ref{thm_Main1} it suffices to prove that the digraphs $Y_k$ are C-homogeneous because it is an easy consequence of the fact that $H$ is homogeneous, that $H[\bar{K}_n]$ is C-homogeneous.
Furthermore, an immediate consequence of the fact that $K_{n,n}$ is a homogeneous bipartite graph is that $C_m[\bar{K}_n]$ is C-homogeneous and that the graphs in part (iv) are C-homogeneous was already proved in Theorem~\ref{thm_ImprimitiveCase}.
To prove that the digraphs $Y_k$ with $k\geq 3$ are C-homogeneous, let $A$ and $B$ be two isomorphic connected induced subgraphs of~$D:=Y_k$.
Let $V_1,V_2,V_3$ be the three vertex sets as in the proof of Lemma~\ref{lem_DeltaBipThenDirectedCycle} and let $\Delta_1,\Delta_2,\Delta_3$ be the corresponding reachability digraphs.
Let $\alpha$ be an isomorphism from $A$ to~$B$.
It is straightforward to see that $(VA\cap V_i)^\alpha$ is precisely the intersection of~$VB$ with a $V_j$.
So we may assume that $(VA\cap V_i)^\alpha=VB\cap V_i$.
If $A$ and $B$ have at most six vertices, then it is easy to see that every isomorphism from $A$ to $B$ extends to an automorphism of~$D$.
So we may assume that there is at least one $V_i$, say $V_1$, that contains at least three vertices of~$A$.
Then both subdigraphs $\Delta_1\cap A$ and $\Delta_3\cap A$ are connected subdigraphs.
Let $\Delta_1'$ be a minimal subdigraph isomorphic to a $CP_l$ with $l\leq k$ such that $A\cap \Delta_1=A\cap \Delta_1'$.
By replacing $B$ by $B^\gamma$, for an automorphism $\gamma$ of~$D$, we may assume that also $B\cap\Delta_1=B\cap\Delta_1'$ holds.
Since $CP_l$ is a C-homogeneous bipartite graph, we can extend every isomorphism from $\Delta_1'\cap A$ to $\Delta_1'\cap B$ to an automorphism of $\Delta_1'$ and hence, in particular, the restriction of $\alpha$.
Let $\alpha'$ be the automorphism of~$\Delta_1'$ that extends the above restriction of~$\alpha$.
Let $V_3'\sub V_3$ be the set of those vertices which are not adjacent to all vertices of $\Delta_1'$.
As each vertex in $V_3'$ is uniquely determined by two non-adjacent vertices one of which lies in $V_1$ and the other in $V_2$, $\alpha'$ has precisely one extension $\beta$ on~$D':=D[V\Delta_1' \cup V_3']$.
By the construction of $\beta$ it is easy to see that the restriction of $\alpha$ to $D'$ is again an isomorphism from $A\cap D'$ to $B\cap D'$ and is equal to the restriction of~$\beta$ to~$A\cap D'$.
Since all vertices of $A\cap (V_3\sm V_3')$ are adjacent to all vertices of $A\cap (V_1\cup V_2)$ and since the same holds for $B$ instead of~$A$, $\beta$ can be extended to an automorphism of~$D$ whose restriction to~$A$ is $\alpha$.
\end{proof}

\providecommand{\bysame}{\leavevmode\hbox to3em{\hrulefill}\thinspace}
\providecommand{\MR}{\relax\ifhmode\unskip\space\fi MR }
\providecommand{\MRhref}[2]{%
  \href{http://www.ams.org/mathscinet-getitem?mr=#1}{#2}
}
\providecommand{\href}[2]{#2}


\begin{thebibliography}{10}

\bibitem{CPW}
P.J. Cameron, C.E. Praeger, and N.C. Wormald, \emph{Infinite highly arc
  transitive digraphs and universal covering digraphs}, Combinatorica
  \textbf{13} (1993), no.~4, 377--396.

\bibitem{Cherlin-CountHomDigraphs}
G.L. Cherlin, \emph{The classification of countable homogeneous directed graphs
  and countable homogeneous $n$-tournaments}, vol. 131, Mem.\ Amer.\ Math.\
  Soc., no. 621, Amer.\ Math.\ Soc., 1998.

\bibitem{DJM}
R.~Diestel, H.A. Jung, and R.G. M\"oller, \emph{On vertex transitive graphs of
  infinite degree}, Arch.\ Math.\ (Basel) \textbf{60} (1993), no.~6, 591--600.

\bibitem{Enomoto}
H.~Enomoto, \emph{Combinatorially homogeneous graphs}, J.\ Combin.\ Theory
  (Series B) \textbf{30} (1981), no.~2, 215--223.

\bibitem{Gard-HomogeneousGraphs}
A.~Gardiner, \emph{Homogeneous graphs}, J.\ Combin.\ Theory (Series B)
  \textbf{20} (1976), no.~1, 94--102.

\bibitem{Gard-HomConditions}
\bysame, \emph{Homogeneity conditions in graphs}, J.\ Combin.\ Theory (Series
  B) \textbf{24} (1978), no.~3, 301--310.

\bibitem{GGK}
M.~Goldstern, R.~Grossberg, and M.~Kojman, \emph{Infinite homogeneous bipartite
  graphs with unequal sides}, Discrete Math. \textbf{149} (1996), no.~1-3,
  69--82.

\bibitem{GrayMacpherson}
R.~Gray and D.~Macpherson, \emph{Countable connected-homogeneous graphs}, J.\
  Combin.\ Theory (Series B) \textbf{100} (2010), no.~2, 97--118.

\bibitem{GM-CHomDigraphs}
R.~Gray and R.G. M\"oller, \emph{Locally-finite connected-homogeneous
  digraphs}, Preprint (2009).

\bibitem{HH-ConHomDigraphs}
M.~Hamann and F.~Hundertmark, \emph{A classification of connected-homogeneous
  digraphs}, arXiv:1004.5273 (2010).

\bibitem{HP-Transitivity}
M.~Hamann and J.~Pott, \emph{Transitivity conditions in infinite graphs},
  arXiv:0910.5651 (2009).

\bibitem{HedmanPong}
S.~Hedman and W.Y. Pong, \emph{Locally finite homogeneous graphs},
  Combinatorica \textbf{30} (2010), no.~4, 419--434.

\bibitem{Stellmacher}
H.~Kurzweil and B.~Stellmacher, \emph{The theory of finite groups. {A}n
  introduction}, Universitext, Springer-Verlag, New York, 2004.

\bibitem{L-FiniteHomDigraphs}
A.H. Lachlan, \emph{Finite homogeneous simple digraphs}, Proceedings of the
  {H}erbrand symposium ({M}arseilles, 1981) (J.~Stern, ed.), Stud.\ Logic
  Found.\ Math., vol. 107, North-Holland, 1982, pp.~189--208.

\bibitem{L-Tournaments}
\bysame, \emph{Countable homogeneous tournaments}, Trans.\ Am.\ Math.\ Soc.
  \textbf{284} (1984), no.~2, 431--461.

\bibitem{LW-CountUltrahomGraphs}
A.H. Lachlan and R.~Woodrow, \emph{Countable ultrahomogeneous undirected
  graphs}, Trans.\ Am.\ Math.\ Soc. \textbf{262} (1980), no.~1, 51--94.

\bibitem{DistanceTransitive}
H.D. Macpherson, \emph{Infinite distance transitive graphs of finite valency},
  Combinatorica \textbf{2} (1982), no.~1, 63--69.

\bibitem{MMMSTZ}
A.~Malni{\v{c}}, D.~Maru{\v{s}}i{\v{c}}, R.G. M\"oller, N.~Seifter,
  V.~Trofimov, and B.~Zgrabli{\v{c}}, \emph{Highly arc transitive digraphs:
  reachability, topological groups}, European J.\ Combin. \textbf{26} (2005),
  no.~1, 19--28.

\bibitem{Wielandt}
H.~Wielandt, \emph{Finite permutation groups}, Academic Press, New York-London,
  1964.

\end{thebibliography}
\end{document}